\newtheorem{theorem}{Theorem}
\newtheorem{lemma}[theorem]{Lemma}
\newtheorem{coro}[theorem]{Corollary}
\newtheorem{proposition}[theorem]{Proposition}
\newcounter{other}            
\newtheorem{otherth}[other]{Theorem}              
\newtheorem{otherl}[other]{Lemma}        
\newcommand{\Cn}{\mathbb{C}^n}
\newcommand{\Sn}{\mathbb{S}_ n}
\newcommand{\Bn}{\mathbb{B}_ n}
\numberwithin{equation}{section}
\begin{document}

\title[Volterra operators from Bergman spaces to Hardy spaces]{Volterra type integration operators\\ from Bergman spaces to Hardy spaces}
\author[Santeri Miihkinen]{Santeri Miihkinen}
\address{Santeri Miihkinen \\Department of Mathematics \\
\AA bo Akademi University\\
FI-20500 \AA bo\\
Finland} \email{santeri.miihkinen@abo.fi}

\author[Jordi Pau]{Jordi Pau}
\address{Jordi Pau \\Departament de Matem\`{a}tiques i Inform\`{a}tica \\
Universitat de Barcelona\\
08007 Barcelona\\
Catalonia, Spain} \email{jordi.pau@ub.edu}

\author[Antti Per\"{a}l\"{a}]{Antti Per\"{a}l\"{a}}
\address{Antti Per\"{a}l\"{a} \\Departament de Matem\`{a}tiques i Inform\`{a}tica \\
Universitat de Barcelona\\
08007 Barcelona\\
Catalonia, Spain\\
Barcelona Graduate School of Mathematics (BGSMath).} \email{perala@ub.edu}

\author[Maofa Wang]{Maofa Wang}
\address{Maofa Wang \\School of Mathematics and Statistics, Wuhan University, Wuhan
430072,   CHINA \\} \email{mfwang.math@whu.edu.cn}

\date{March 29, 2019}


%
\subjclass[2010]{32A35, 32A36, 47B38}

\keywords{Integration operator, Hardy space, Bergman space, area formula}

\thanks{J. Pau was partially
 supported by  DGICYT grant MTM2014-51834-P
(MCyT/MEC) and the grant 2017SGR358 (Generalitat de Catalunya). A. Per\"al\"a acknowledges financial support from the Spanish Ministry of Economy and Competitiveness, through the Mar\'ia de Maeztu Programme for Units of Excellence in R\&D (MDM-2014-0445). J. Pau and A. Per\"al\"a are also supported by the grant MTM2017-83499-P (Ministerio de Educaci\'{o}n y Ciencia). M. Wang was partially supported by  {NSFC (11771340)}  and China Scholarship Council.}


\begin{abstract}
We completely characterize the boundedness of the Volterra type integration operators $J_b$ acting from the weighted Bergman spaces $A^p_\alpha$ to the Hardy spaces $H^q$ of the unit ball of $\mathbb{C}^n$ for all $0<p,q<\infty$. A partial solution to the case $n=1$ was previously obtained by Z. Wu in \cite{Wu}. We solve the cases left open there and extend all the results to the setting of arbitrary complex dimension $n$. Our tools involve area methods from harmonic analysis, Carleson measures and Kahane-Khinchine type inequalities, factorization tricks for tent spaces of sequences, as well as techniques and integral estimates related to Hardy and Bergman spaces.
\end{abstract}

\maketitle


\section{Introduction}

{Let} $\Bn$ be the open unit ball of $\Cn$ and $H$ denote the algebra of holomorphic functions on $\Bn$. A function $b \in H$ induces an integration operator (or a Volterra operator) $J_b$ given by the formula:
\begin{equation}\label{Jb}
J_bf(z)=\int_0^1 f(tz)Rb(tz)\frac{dt}{t}, \quad z \in \Bn,
\end{equation}
where $f \in H$ and $Rb$ is the radial derivative of $b$:
$$Rb(z)=\sum_{k=1}^n z_k \frac{\partial b}{\partial z_k}(z),\quad z=(z_1, z_2, \dots, z_n) \in \Bn.$$
A fundamental property of the operator $J_b$ is the following basic formula involving the radial derivative $R$:
$$R(J_bf)(z)=f(z)Rb(z),\quad z \in \Bn.$$

\noindent A natural question about the integral operator is how to characterize $b$ so that $J_b$  is bounded from one holomorphic space to another. The operator $J_b$ was first studied by Pommerenke \cite{Pom} in the setting of the Hardy spaces of the unit disk and the functions of bounded mean oscillation. Some important papers include the pioneering works of Aleman, Cima and Siskakis \cite{AC, AS1, AS2}.  After then, a lot of reserach on $J_b$ has been done. The higher dimensional variant of $J_b$ was introduced by Hu \cite{Hu}. In the fairly recent paper \cite{P1} the second named author completely characterized the boundedness of $J_b$ between Hardy spaces $H^p$ and $H^q$ of the unit ball for the full range $0<p,q<\infty$. An interested reader may consult the paper for more references on the topic: this is an active field and there are too many relevant works to be listed here.

Recall that, for $0<p<\infty$, a function $f \in H$ belongs the Hardy space $H^p$, if
$$\|f\|_{H^p}^p=\sup_{0<r<1}\int_{\Sn}|f(r\xi)|^p d\sigma(\xi)<\infty.$$
Here $\Sn=\partial \Bn$ denotes the unit sphere, and $d\sigma$ the surface measure on $\Sn$ normalized so that $\sigma(\Sn)=1$.
Given $\alpha>-1$ and $0<p<\infty$, a function $f \in H$ belongs to the weighted Bergman space $A^p_\alpha$, if
$$\|f\|_{A^p_\alpha}^p = \int_{\Bn}|f(z)|^p dV_\alpha(z)<\infty.$$
Here $dV=dV_0$ is the Lebesgue measure on $\Bn$, normalized so that $V(\Bn)=1$. The measure $dV_\alpha$ is given by $dV_\alpha(z)=c(n,\alpha)(1-|z|^2)^\alpha dV(z)$ with $c(n,\alpha)$ chosen to guarantee $V_\alpha(\Bn)=1$. We also denote by $H^\infty$ the space of bounded holomorphic functions, equipped with the sup norm. As is well-known, all the spaces above are complete, and when $p=2$ they are Hilbert spaces with the obvious inner product. By now there are many excellent references for $H^p$ and $A^p_\alpha$ spaces; we refer the reader to \cite{ZZ, ZhuBn, Zhu}.

In the present paper, we characterize the boundedness of $J_b:A^p_\alpha \to H^q$ on the unit ball $\Bn$ for all possible ranges $0<p,q<\infty$ and $\alpha>-1$. For $n=1$ this problem was studied by Wu \cite{Wu} who solved the cases $0<p\leq q<\infty$ and $2\leq q<p<\infty$, and left the case $0<q<\min\{2,p\}$ as an open problem. We will fill the gap by solving the open case $0<q<\min\{2,p\}$ and extend all these results to the setting of higher dimension. \medskip

Our main theorem is the following.

\begin{theorem}\label{main}
Let $\alpha>-1$, $0<p,q<\infty$ and $b \in H$. Then the following hold:
\begin{itemize}
\item[(1) ] If $0<p\leq \min\{2,q\}$ or $2<p<q<\infty$, then $J_b:A^p_\alpha\to H^q$ is bounded if and only if
$$\sup_{z \in \Bn} |Rb(z)|(1-|z|^2)^{\frac{n}{q}+1-\frac{n+1+\alpha}{p}}<\infty.$$
\item[(2) ] If $2<p=q<\infty$, then $J_b:A^p_\alpha\to H^q$ is bounded if and only if $$|Rb(z)|^{\frac{2p}{p-2}}(1-|z|^2)^{\frac{p-2\alpha}{p-2}} dV(z)$$ is a Carleson measure.
\item[(3) ] If $p>\max\{2,q\}$, then $J_b:A^p_\alpha\to H^q$ is bounded if and only if
$$\xi \mapsto \left(\int_{\Gamma(\xi)}|Rb(z)|^{\frac{2p}{p-2}}
(1-|z|^2)^{\frac{2-2\alpha}{p-2}+1-n}dV(z)\right)^{\frac{p-2}{2p}}$$
belongs to $L^{\frac{pq}{p-q}}(\Sn)$.
\item[(4) ] If $0<q<p\leq 2$, then $J_b:A^p_\alpha\to H^q$ is bounded if and only if
$$\xi \mapsto \sup_{z \in \Gamma(\xi)}|Rb(z)| (1-|z|^2)^{\frac{p-1-\alpha}{p}}$$
belongs to $L^{\frac{pq}{p-q}}(\Sn)$.
\end{itemize}
\end{theorem}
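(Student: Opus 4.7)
The overall strategy is to rephrase the question as a tent-space embedding via the Littlewood--Paley area characterisation of $H^{q}$. Since $R(J_{b}f)(z)=f(z)Rb(z)$, one has
$$\|J_{b}f\|_{H^{q}}^{q}\asymp|J_{b}f(0)|^{q}+\int_{\Sn}\Big(\int_{\Gamma(\xi)}|f(z)|^{2}|Rb(z)|^{2}(1-|z|^{2})^{1-n}dV(z)\Big)^{q/2}d\sigma(\xi),$$
so $J_{b}:A^{p}_{\alpha}\to H^{q}$ is bounded iff the tent-space inequality $\|f\cdot Rb\|_{T^{q}_{2}(d\lambda)}\lesssim\|f\|_{A^{p}_{\alpha}}$ holds, with $d\lambda(z)=(1-|z|^{2})^{1-n}dV(z)$. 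Each of the four cases is a different regime of this single estimate.

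For necessity, I would test on the normalised family
$$f_{w}(z)=\frac{(1-|w|^{2})^{t-(n+1+\alpha)/p}}{(1-\langle z,w\rangle)^{t}},\qquad w\in\Bn,$$
which is uniformly bounded in $A^{p}_{\alpha}$ for $t$ large. In case (1), restricting the area integral to a Carleson window near $w$ turns $\|J_{b}f_{w}\|_{H^{q}}\lesssim 1$ into the stated growth bound on $|Rb|$. For cases (2)--(4) one tests on randomised sums $\sum_{k}\eps_{k}\lambda_{k}f_{w_{k}}$ over a separated net $\{w_{k}\}\subset\Bn$ and uses Kahane--Khinchine to move expectations past the $L^{q/2}$-norm of the square function; the condition on $b$ then becomes a discrete embedding for the sequence measure $\sum_{k}|\lambda_{k}|^{2}|Rb(w_{k})|^{2}(1-|w_{k}|^{2})^{2-n}\delta_{w_{k}}$, which matches the tent-space/Carleson conditions of (2), (3), (4) through standard discrete-to-continuous equivalences.

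For sufficiency, case (1) follows from the pointwise bound $|f(z)|\lesssim\|f\|_{A^{p}_{\alpha}}(1-|z|^{2})^{-(n+1+\alpha)/p}$: the sup hypothesis on $Rb$ then makes $d\mu_{b}=|Rb|^{2}(1-|z|^{2})^{1-n}dV$ a Carleson measure of precisely the right order to embed $A^{p}_{\alpha}\hookrightarrow L^{q}(d\mu_{b})$. Case (2) is the classical $p=q>2$ Carleson-measure regime, accessible by duality on $\Sn$. For case (3), a H\"older-type factorisation inside the cone with exponents $p/2$ and $p/(p-2)$ produces a weighted area integral of $|f|^{p}$ together with the weighted $L^{2p/(p-2)}$-area integral of $Rb$ featured in (3); a subsequent H\"older on $\Sn$ with exponents $p$ and $pq/(p-q)$, followed by Fubini to identify the first factor with $\|f\|_{A^{p}_{\alpha}}$, closes the argument.

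The principal obstacle is case (4), where $q<p\le 2$ places both exponents below the Hilbert range: the internal H\"older step of case (3) is no longer admissible, $T^{p}_{2}$ is quasi-Banach, and linear duality fails. My plan is to replace the H\"older splitting by the pointwise bound
$$\Big(\int_{\Gamma(\xi)}|f|^{2}|Rb|^{2}d\lambda\Big)^{1/2}\le\Big(\sup_{\Gamma(\xi)}|Rb(z)|(1-|z|^{2})^{(p-1-\alpha)/p}\Big)\Big(\int_{\Gamma(\xi)}|f|^{2}(1-|z|^{2})^{2(1+\alpha-p)/p}d\lambda\Big)^{1/2},$$
followed by a H\"older splitting on $\Sn$ with exponents $p$ and $pq/(p-q)$ which produces the stated $L^{pq/(p-q)}$ norm of the nontangential supremum, while the remaining area integral of $|f|^{2}$ is controlled by $\|f\|_{A^{p}_{\alpha}}$ through a Bergman-space area-function equivalence adapted to $p\le 2$, itself relying on Kahane--Khinchine randomisation together with the atomic decomposition of the quasi-Banach tent space. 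Making this last equivalence rigorous with the correct weights is the technical heart of the argument and the step that eluded Wu's earlier one-variable analysis.
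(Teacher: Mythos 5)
Your overall framework (area description of $H^q$, kernel test functions, Kahane--Khinchine over a lattice) is the same as the paper's, and your sufficiency arguments for (2) and (3) match it. But there are two genuine gaps.

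First, and most importantly, your necessity argument for cases (3) and (4) stops exactly where the real difficulty begins. After randomising and applying Kahane and Khinchine you obtain an estimate of the form
\begin{equation*}
\int_{\Sn}\Big(\sum_{a_k\in\Gamma(\xi)}|\lambda_k|^2\,|Rb(a_k)|^2(1-|a_k|^2)^{2-2(1+\alpha)/p}\Big)^{q/2}d\sigma(\xi)\lesssim \|J_b\|^q\,\|\lambda\|_{\ell^p}^q
\end{equation*}
valid for all $\lambda\in\ell^p$ (this is \eqref{upper} in the paper). Passing from this to the statement that $\xi\mapsto\big(\sum_{a_k\in\Gamma(\xi)}|Rb(a_k)|^{2p/(p-2)}(1-|a_k|^2)^{\cdots}\big)^{q(p-2)/2p}$ lies in $L^{p/(p-q)}(\Sn)$ is \emph{not} a ``standard discrete-to-continuous equivalence'': the discrete-to-continuous part is Lemmas \ref{disc_cont} and \ref{discsup}, but the extraction of the condition on $Rb$ from the family of inequalities over $\lambda$ requires the duality of tent spaces of sequences (Theorem \ref{TTD1}) combined with the Cohn--Verbitsky type factorization $T^p_q(Z)=T^{p_1}_{q_1}(Z)\cdot T^{p_2}_{q_2}(Z)$ of Proposition \ref{factor}. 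When $q\geq 2$ a direct H\"older/duality in the inner sum works (which is why Wu could handle $2\leq q<p$), but for $q<2$ the exponent $q/2<1$ kills the naive duality, and the factorization trick is precisely the new ingredient that closes the case left open by Wu. Your proposal never identifies this step, so the part of the theorem that is actually new remains unproved. Relatedly, you locate the ``technical heart'' of case (4) in the sufficiency direction, but there the paper needs only a lattice decomposition, subharmonicity, and the elementary fact that $p/2\leq 1$ lets one pull the power inside the sum; no randomisation or atomic decomposition is required.

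Second, your sufficiency argument for case (1) reduces matters to an embedding $A^p_\alpha\hookrightarrow L^q(d\mu_b)$, but the $H^q$ norm of $J_bf$ is a $T^q_2$ (square-function) norm, not an $L^q(\mu)$ norm, so this reduction is only literally valid when $q=2$. For general $0<p\leq\min\{2,q\}$ or $2<p<q$ one needs a different device; the paper uses the Beatrous--Burbea embeddings of Theorem \ref{embed2}, namely $\|g\|_{H^q}\lesssim\|Rg\|_{A^q_{q-1}}$ for $q\leq 2$ and $\|g\|_{H^q}\lesssim\|Rg\|_{A^p_{p+np/q-n-1}}$ for $p<q$, after which the Bloch-type hypothesis on $Rb$ is applied pointwise. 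Your necessity argument for (1) via the test functions $f_w$ is the same as the paper's and is fine; for case (2) necessity the paper avoids randomisation altogether, using Luecking's theorem together with the Carleson-measure characterisation for Bergman spaces and single test functions $h_a(z)=(1-\langle z,a\rangle)^{-\beta}$, which is simpler than the route you sketch.
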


The reader might find reading the theorem easier after examining Figure 1. The items of Theorem \ref{main} are labeled accordingly. The part of boundary shared by items (1) and (4) belongs to the item (1), and the part of boundary shared by items (3) and (4) belongs to the item (4).\\

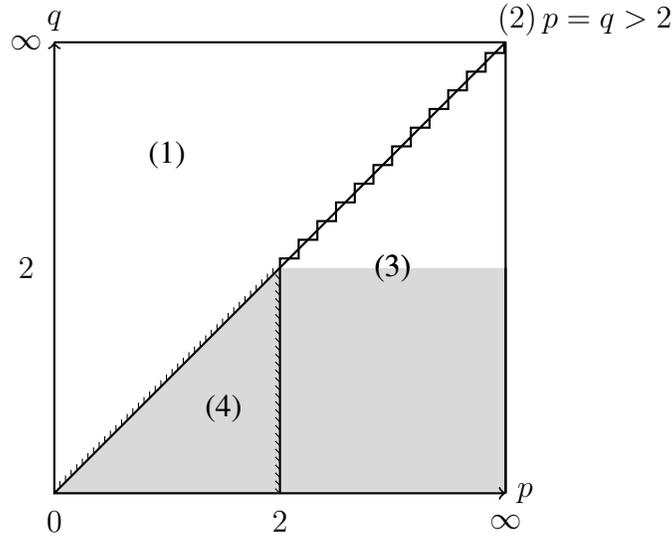
\begin{figure}[h]
\begin{center}
\caption{The main theorem summarized in $(p,q)$ coordinates}
\bigskip
\begin{tikzpicture}[scale=1.5, align = flush center, snake=zigzag]


    
    \draw (0,0) coordinate (a_1) -- (4,4) coordinate (a_2);
    
    \coordinate (c) at (2,2); 
     \coordinate (d) at (2,2);
    \coordinate (e) at (2,4);    
   
      \node at (-0.25,2) {$2$};
     \draw[thick] (2,0)--(d);
     \node at (2,-0.25) {$2$};
     \draw[thick, snake] (2,2) -- (4,4);
     \node at (-.25,4) {$\infty$};
     \node at (4,-.25) {$\infty$};
     \node at (1,3) {(1)};
     \node at (3,1) {(3)};
     \node at (3,2) {(3)};
     \node at (1.5,.75) {(4)};
     \node at (4.7,4.2) {$(2) \, p=q  >2$};
     \node at (0,-0.25) {$0$};
\draw (0,4) -- (0,0) -- (4,4) -- cycle;
\draw  (2,0) -- (2,2) -- (4,4) -- (4,0)-- cycle;    
   
\draw[fill=gray!30]  (0,0) -- (2,0) -- (2,2) -- cycle;   
\fill[fill=gray!30]  (2,2) -- (2,0) -- (4,0) -- (4,2)--cycle;

\draw[thick] (4,0)--(4,2);
\draw[thick] (0,0)--(4,4);
\draw[thick] (2,0)--(2,2);
\draw[thick] (0,4)--(4,4);
\draw[thick] (4,0)--(4,4);
\node at (1.5,.75) {(4)};
\draw [<->,thick] (0,4) node (yaxis) [above] {$q$}
        |- (4,0) node (xaxis) [right] {$p$};
\draw [snake=border,segment angle=45, segment amplitude = 2.5, segment length = 3] (0,0)--(2,2);
\draw [snake=border,segment angle=45, segment amplitude = 2.5, segment length = 3] (2,0)--(2,2);
\node at (3,2) {(3)};
\end{tikzpicture}
\end{center}
\end{figure}

We mention here the missing case $0<q<\min\{2,p\}$ is contained in the items (3) and (4).
In addition, aside from the cases when $J_b$ acts only on Hardy or Bergman spaces, the setting of the present paper is perhaps the most natural one, the complete solution is rather elaborate. All conditions above can be formulated in terms of some well-known function spaces, such as various tent spaces and Triebel-Lizorkin spaces. However, due to the multitude of the parameters, we believe that the actual conditions are easier to read. An exception is the item (1), which is equivalent to $f$ belonging to the Bloch type space $\mathcal{B}^{\frac{n}{q}+1-\frac{n+1+\alpha}{p}}$, whose definition is the necessary and sufficient condition given in item (1).

A reader who is familiar with the paper \cite{Wu} might notice that our case $2<p<q<\infty$ seems different from that of the reference. However, these conditions are known to be equivalent -- it is simply our personal choice to use a Bloch type semi-norm instead of $s$-Carleson measure condition. Our tools include Carleson measures, area techniques, Kahane-Khinchine type inequalities, and some factorization tricks for tent spaces of sequences.

The paper is organized as follows. The proof of the main theorem is split in two sections: Section \ref{pleq} containing the proof for $p\leq q$ (that is, cases (1) and (2)) and Section \ref{pbig} containing the case $p>q$ (that is, the items (3) and (4)). The sections preceeding these mains results contain some background materials and the tools used in the paper, as well as some key lemmas that are needed along the way.

We will use notation, which is quite standard. Typically constants are used with no attempt to calculate their exact values. Given two non-negative quantities $A$ and $B$, depending on some parameters, we write $A\lesssim B$ to imply that there exists some inessential constant $C>0$ so that $A\leq C B$. The converse relation $A\gtrsim B$ is defined in an analogous manner, and if $A\lesssim B$ and $A\gtrsim B$ both hold, we write $A \asymp B$. For two quasinormed spaces $X$ and $Y$, the notation $X\sim Y$ means that these spaces are isomorphic. Given $p \in [1,\infty]$, we will denote by $p'=p/(p-1)$ its H\"older conjugate. In this context we agree that $1'=\infty$ and $\infty'=1$.
\medskip

\section{Preliminaries}

In this section, we collect the necessary preliminaries for the course of the proof of our main theorem. The ideas listed in this section are used throughout the paper.

\subsection{Carleson measures and embedding theorems}

Let us recall the concept of a Carleson measure, which will be important for our analysis. For $\xi \in \Sn$ and $\delta>0$, consider the non-isotropic metric ball
$$B_{\delta}(\xi)=\left\lbrace z\in \Bn: |1-\langle z, \xi \rangle |<\delta \right\rbrace.$$
A positive Borel measure $\mu$ on $\Bn$ is said to be a Carleson measure if
$$\mu(B_{\delta}(\xi))\lesssim \delta^ n$$
for all $\xi \in \Sn$ and $\delta>0$. Obviously every Carleson measure is finite. H\"{o}rmander \cite{H} extended to several complex variables the famous Carleson measure embedding theorem \cite{Car0, Car1} asserting that, for $0<p<\infty$, the embedding $I_ d:H^p\rightarrow L^p(\mu):=L^p(\Bn,d\mu)$ is bounded if and only if $\mu$ is a Carleson measure.
More generally, for $s>0$, a finite positive Borel measure on $\Bn$ is called an $s$-Carleson measure if  $\mu(B_{\delta}(\xi))\lesssim \delta^{ns}$ for all $\xi\in \Sn$ and $\delta>0$. We set $$\|\mu\|_{CM_s}:=\sup_{\xi \in \Sn, \delta>0} \mu(B_{\delta}(\xi))\delta^{-ns}.$$
For simplicity, we write $\|\mu\|_{CM}$ for the $\|\mu\|_{CM_1}.$
It is well-known (see \cite[Theorem 45]{ZZ}) that $\mu$ is an $s$-Carleson measure if and only if for each (some) $t>0$,
\begin{equation}\label{sCM}
\sup_{a\in \Bn}\int_{\Bn} \!\!\frac{(1-|a|^2)^t}{|1-\langle a,z \rangle |^{ns+t}} \,d\mu(z)<\infty.
\end{equation}
Moreover, with constants depending on $t$, the supremum of the above integral is comparable to $\|\mu\|_{CM_s}$.
In \cite{Du}, Duren gave an extension of Carleson's theorem by showing that, for
 $0<p\leq q<\infty$, one has that $I_ d:H^p\rightarrow L^q(\mu)$ is bounded if and only if $\mu$ is a $q/p$-Carleson measure. Moreover, one has the estimate $$\|I_ d\|_{H^p\rightarrow L^q(\mu)}\asymp \|\mu\|^{1/q}_{CM_{q/p}}.$$ A simple proof of this result, in the setting of the unit ball, can be found in \cite{P1} for example.

We will need the following well-known embedding theorem for Hardy and Bergman spaces. The first part is a consequence of Duren's theorem, but it can be proven in a much more elementary manner (see Theorem 4.48 of \cite{ZhuBn}). The second part follows by writing $|f|^q=|f|^{q-p}|f|^p$ and using the standard pointwise estimate for the factor $|f|^{q-p}$:
$$|f(z)|^{q-p}\lesssim (1-|z|^2)^{-\frac{(n+1+\alpha)(q-p)}{p}}\|f\|_{A^p_\alpha}^{q-p}.$$

\begin{otherth}\label{embed}
Let $\alpha>-1$ and $0<p< q<\infty$. Then
$$H^p \subset A^q_{n(q/p-1)-1}$$
and
$$A^p_\alpha \subset A^q_{\beta},$$
with $\beta=(n+1+\alpha)(q/p-1)+\alpha$. Moreover, the inclusion mappings are bounded.
\end{otherth}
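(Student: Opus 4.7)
\emph{Plan.}  Both inclusions will follow from ideas already recalled in the paper: Duren's extension of Carleson's theorem for the first, and a pointwise estimate plus H\"older--style bootstrap for the second.  I will do them in that order, since the second explicitly comes with a hint in the excerpt.

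For the first embedding $H^p\subset A^q_{n(q/p-1)-1}$, set $\alpha_0=n(q/p-1)-1$ and consider the measure $d\mu(z)=(1-|z|^2)^{\alpha_0}\,dV(z)$.  Because $q>p$, we have $\alpha_0>-1$, so $d\mu$ is a finite positive Borel measure on $\Bn$.  The claimed inclusion is exactly the boundedness of the identity $I_d\colon H^p\to L^q(\mu)$, and by Duren's theorem (recalled just before the statement) this is equivalent to $\mu$ being a $q/p$-Carleson measure.  So the key step is a direct size check on non-isotropic balls: using the standard estimate $V_{\alpha_0}\bigl(B_{\delta}(\xi)\bigr)\asymp \delta^{n+1+\alpha_0}$, one reads off $n+1+\alpha_0=nq/p$, which is precisely the Carleson exponent needed.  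Taking $p$-th roots yields $\|f\|_{A^q_{\alpha_0}}\lesssim \|f\|_{H^p}$, with constants depending only on $n,p,q$.

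For the second embedding, follow the hint verbatim.  Start from the standard pointwise bound
$$|f(z)|\lesssim (1-|z|^2)^{-\frac{n+1+\alpha}{p}}\|f\|_{A^p_\alpha},\qquad z\in\Bn,\ f\in A^p_\alpha,$$
which is immediate from the sub-mean value property of $|f|^p$ averaged over a Bergman ball.  Raising this to the power $q-p$ and multiplying by $|f|^p$ gives
$$|f(z)|^q\;\lesssim\;\|f\|_{A^p_\alpha}^{q-p}\,(1-|z|^2)^{-\frac{(n+1+\alpha)(q-p)}{p}}\,|f(z)|^p.$$
Integrate against $dV_\beta$: the exponent of $(1-|z|^2)$ becomes $\beta-\frac{(n+1+\alpha)(q-p)}{p}$, which by the defining choice $\beta=(n+1+\alpha)(q/p-1)+\alpha$ simplifies to $\alpha$.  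What remains is $\|f\|_{A^p_\alpha}^p$, and combining yields $\|f\|_{A^q_\beta}^q\lesssim \|f\|_{A^p_\alpha}^q$, as required.

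Essentially no genuine obstacle arises here.  The only small care point is in the first embedding, where one might be tempted to imitate the Bergman-to-Bergman trick with the Hardy maximal estimate $\int_\Bn |f|^p(1-|z|^2)^\gamma dV\lesssim \|f\|_{H^p}^p$ (valid for $\gamma>-1$); this argument just fails at the critical exponent, and that is precisely why the Carleson-measure formulation is invoked.  Everything else is bookkeeping with the exponents, and the boundedness of both inclusions follows automatically from the explicit constants produced along the way.
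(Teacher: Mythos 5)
Your proof is correct and follows essentially the same route the paper indicates: the first inclusion via Duren's theorem together with the standard estimate $V_{\alpha_0}\bigl(B_{\delta}(\xi)\bigr)\asymp \delta^{\,n+1+\alpha_0}$ with $n+1+\alpha_0=nq/p$, and the second via the pointwise estimate applied to the factor $|f|^{q-p}$ exactly as the paper suggests. The only slip is cosmetic: at the end of the first part you should take $q$-th roots of $\|f\|_{L^q(\mu)}^q=\|f\|_{A^q_{\alpha_0}}^q$, not $p$-th roots, to land on $\|f\|_{A^q_{\alpha_0}}\lesssim\|f\|_{H^p}$.
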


We will also need the following Dirichlet type embedding theorem, a result can be found in \cite{BB}.

\begin{otherth}\label{embed2}
Assume that $f \in H$ with $f(0)=0$. If $0<q\leq 2$, then
$$
\|f\|_{H^q}\lesssim \|Rf\|_{A^q_{q-1}}.
$$
If $0<p< q<\infty$, then

$$
\|f\|_{H^q}\lesssim \|Rf\|_{A^p_{p+np/q-n-1}}.
$$
\end{otherth}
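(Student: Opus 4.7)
The plan is to rely on the Littlewood--Paley (area-function) characterization of Hardy spaces
\begin{equation*}
\|f\|_{H^q}^q \asymp |f(0)|^q + \int_{\Sn}\left(\int_{\Gamma(\xi)}|Rf(z)|^2(1-|z|^2)^{1-n}\,dV(z)\right)^{q/2} d\sigma(\xi),
\end{equation*}
together with Theorem \ref{embed}; the assumption $f(0)=0$ kills the first term, so only the area integral needs to be bounded.

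For the first statement ($0<q\leq 2$), I would cover each admissible region $\Gamma(\xi)$ by a bounded-overlap family of Bergman balls $D(z_k,r)$, on which the subharmonicity of the holomorphic function $Rf$ gives the mean-value estimate $|Rf(z_k)|^q(1-|z_k|^2)^{n+1}\lesssim \int_{D(z_k,r)}|Rf|^q\,dV$, while $(1-|w|^2)\asymp(1-|z_k|^2)$ throughout each ball. Exploiting $q/2\leq 1$ to pass the outer power inside via the sub-additive inequality $(\sum_k a_k)^{q/2}\leq \sum_k a_k^{q/2}$, each resulting piece satisfies
\begin{equation*}
\left(\int_{D(z_k,r)}|Rf|^2(1-|z|^2)^{1-n}\,dV\right)^{q/2}\lesssim (1-|z_k|^2)^{q-n-1}\int_{D(z_k,r)}|Rf|^q\,dV,
\end{equation*}
after writing $|Rf|^2=|Rf|^{2-q}\cdot|Rf|^q$ and absorbing the $|Rf|^{2-q}$ pointwise factor by another use of subharmonicity. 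Integrating in $\xi$ via the identity $\int_{\Sn}\chi_{\Gamma(\xi)}(z)\,d\sigma(\xi)\asymp(1-|z|^2)^n$ and invoking the bounded overlap of the balls collapses the expression to $\int_{\Bn}|Rf(z)|^q(1-|z|^2)^{q-1}\,dV(z)=\|Rf\|_{A^q_{q-1}}^q$.

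For the second statement ($0<p<q$), the weight $\alpha=p+np/q-n-1$ is tuned so that $n+1+\alpha=p(q+n)/q$, whence the second part of Theorem \ref{embed} gives $\beta=(n+1+\alpha)(q/p-1)+\alpha=q-1$ and therefore the Bergman embedding $\|Rf\|_{A^q_{q-1}}\lesssim \|Rf\|_{A^p_{p+np/q-n-1}}$. When $q\leq 2$ this chains with the first statement and completes the argument. When $q>2$ the area-function identity is no longer available, and I would instead invoke an integral representation of the form $f(z)=c_\gamma\int_{\Bn}Rf(w)(1-|w|^2)^{\gamma-1}(1-\langle z,w\rangle)^{-(n+\gamma)}\,dV(w)$ (valid since $f(0)=0$ and $\gamma$ is taken sufficiently large), and close the estimate through Forelli--Rudin / Schur-type kernel bounds applied at the critical exponent.

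The main obstacle is precisely the sub-case $q>2$ of the second statement. Here one cannot simply combine the first statement with the Bergman embedding, because $\|f\|_{H^q}\asymp\|Rf\|_{A^q_{q-1}}$ fails for $q>2$; the route via the integral representation is delicate because the parameter $\alpha=p+np/q-n-1$ sits exactly on the critical line of the Forelli--Rudin estimates, so the bookkeeping of weights must be handled with care.
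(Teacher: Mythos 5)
The paper offers no proof of Theorem \ref{embed2} at all --- it is imported from Beatrous--Burbea \cite{BB} --- so there is nothing internal to compare against and your attempt has to stand on its own. Much of it does. The first inequality ($0<q\leq 2$) via Calder\'on's area theorem, a bounded-overlap cover of $\Gamma(\xi)$ by Bergman balls, the subharmonic mean-value estimate, the subadditivity of $t\mapsto t^{q/2}$, and the Fubini identity \eqref{EqG} is a complete and correct argument; your exponent $(1-|z_k|^2)^{q-n-1}$ is right and collapses to the weight $q-1$ as claimed. The reduction of the second inequality to the first via Theorem \ref{embed} is also correct: with $\alpha=p+np/q-n-1$ one has $n+1+\alpha=p(q+n)/q$ and hence $\beta=q-1$, so $\|Rf\|_{A^q_{q-1}}\lesssim\|Rf\|_{A^p_\alpha}$ and the case $q\leq 2$ of the second inequality follows. (For $p\leq nq/(n+q)$ the weight is $\leq -1$ and $A^p_\alpha$ is trivial, so nothing is lost there.)

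The genuine gap is the second inequality when $q>2$, which you flag but do not close, and your proposed repair would not close it. An integral representation of $f$ against $Rf$ followed by ``Forelli--Rudin / Schur-type bounds at the critical exponent'' is the wrong tool: Schur-test and Forelli--Rudin estimates yield boundedness of the kernel operator between Lebesgue-type spaces with the \emph{same} exponent, whereas here one must gain integrability from $p$ to $q>p$; that gain is a fractional-integration (Hardy--Littlewood--Sobolev) phenomenon and is exactly what fails to follow from a Schur test at the critical index. A workable route for $p\leq 2<q$ is duality on the area integral: pair $G(\xi)=\int_{\Gamma(\xi)}|Rf|^2(1-|z|^2)^{1-n}dV$ with $\varphi\in L^{(q/2)'}(\Sn)$, convert to $\int_{\Bn}|Rf|^2(1-|z|^2)P[\varphi]\,dV$ via \eqref{EqG}, split $|Rf|^2=|Rf|^{2-p}|Rf|^p$ using the pointwise bound $|Rf(z)|\lesssim(1-|z|^2)^{-(n+1+\alpha)/p}\|Rf\|_{A^p_\alpha}$, and finish with the estimate $P[\varphi](z)\lesssim(1-|z|^2)^{-n(q-2)/q}\|\varphi\|_{L^{(q/2)'}}$ from Lemma \ref{IctBn}; the remaining range $2<p<q$ needs yet another argument (or the citation to \cite{BB}). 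As written, your proof establishes the theorem only for $q\leq 2$.
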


\subsection{Area methods and equivalent norms}

For $\gamma>1$ and $\xi \in \Sn$, define the Kor\'anyi (admissible, non-tangential) approach region $\Gamma_\gamma(\xi)$ by
$$\Gamma_\gamma(\xi)=\left\lbrace z \in \Bn : |1-\langle z,\xi\rangle| <\frac{\gamma}{2}(1-|z|^2)\right\rbrace.$$
In this paper we agree that $\Gamma(\xi):=\Gamma_2(\xi)$. It is known that for every $r>1$ and $\gamma>1$, there exists $\gamma'>1$ so that
\begin{equation}\label{setin}
\bigcup_{z \in \Gamma_\gamma(\xi)}D(z,r)\subset \Gamma_{\gamma'}(\xi).
\end{equation}
We will write $\widetilde{\Gamma}(\xi)$ (and sometimes $\widetilde{\widetilde{\Gamma}}(\xi)$) to indicate this change of aperture.

Given $z \in \Bn$, we can define the set $I(z)=\{\xi \in \Sn : z \in \Gamma(\xi)\}\subset \Sn$. Since $\sigma(I(z))\asymp (1-|z|^2)^n$, an application of Fubini's theorem yields the following important formula:

\begin{equation}\label{EqG}
\int_{\Bn} \varphi(z)d\nu(z)\asymp \int_{\Sn} \left (\int_{\Gamma(\xi)} \varphi(z) \frac{d\nu(z)}{(1-|z|^2)^{n}} \right )d\sigma(\xi),
\end{equation}
where $\varphi$ is any positive measurable function and $\nu$ is a finite positive measure.

Let us now recall the following Hardy-Stein (or Littlewood-Paley) inequalities, which will be very important for our arguments. The proof of these variants can be found in \cite{ZhuBn}.

\begin{otherth}\label{HS}
Let $0<p<\infty$. If $f \in H$ and $f(0)=0$, then
$$\|f\|_{H^p}^p \asymp \int_{\Bn}|f(z)|^{p-2}|Rf(z)|^2 (1-|z|^2)dV(z).$$
In particular,
$$\|f\|_{H^2}^2 \asymp \int_{\Bn}|Rf(z)|^2 (1-|z|^2)dV(z).$$
\end{otherth}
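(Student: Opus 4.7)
My plan is to establish the $p=2$ case directly by a homogeneous expansion and then bootstrap to general $0<p<\infty$ via a Green's theorem / subharmonicity argument applied to $|f|^p$. For the $p=2$ case, using $f(0)=0$ I would expand $f=\sum_{k\ge 1} f_k$ where $f_k$ is the $k$-th homogeneous polynomial in the Taylor series of $f$. Homogeneous polynomials of different degrees are $L^2(\Sn)$-orthogonal, so $\|f\|_{H^2}^2=\sum_k\|f_k\|_{L^2(\Sn)}^2$. Since $Rf_k=kf_k$, polar integration yields
\begin{equation*}
\int_{\Bn}|Rf(z)|^2(1-|z|^2)\,dV(z)=c_n\sum_{k\ge 1}k^2\|f_k\|_{L^2(\Sn)}^2\int_0^1 r^{2k+2n-1}(1-r^2)\,dr,
\end{equation*}
where the inner beta-type integral is $\asymp k^{-2}$, giving the desired equivalence for $p=2$.

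For general $0<p<\infty$, I would apply Green's identity to the nonnegative subharmonic function $u=|f|^p$. Away from the zero set of $f$ one has, in the Euclidean sense, $\Delta u=(p^2/4)|f|^{p-2}|\nabla f|^2$, and Green's formula on the ball $r\Bn$ represents $\int_{\Sn}|f(r\xi)|^p\,d\sigma(\xi)-|f(0)|^p$ as the integral of $\Delta u$ against a Green kernel. Integrating once more in the radial direction (equivalently, applying Fubini to the double integral) converts this kernel into a weight $\asymp(1-|z|^2)$, and the limit $r\to 1^-$ produces
\begin{equation*}
\|f\|_{H^p}^p\asymp\int_{\Bn}|f(z)|^{p-2}|\nabla f(z)|^2(1-|z|^2)\,dV(z).
\end{equation*}
To replace $|\nabla f|^2$ by $|Rf|^2$, I would split $|\nabla f|^2$ into its radial component $|Rf|^2/|z|^2$ and its complex-tangential part; integrating slice-by-slice over the disks $\{\zeta a:\zeta\in\D\}$ for $a\in\Sn$ and invoking the one-dimensional Hardy--Stein identity on each slice, the tangential contribution is absorbed into the radial one, after which Fubini reassembles the weighted ball integral of $|f|^{p-2}|Rf|^2(1-|z|^2)$.

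The main obstacle I foresee is the range $p<2$, where $|f|^{p-2}$ is singular on the zero set of $f$. The Green-identity step must first be carried out on the regularized function $u_\epsilon=(|f|^2+\epsilon^2)^{p/2}$, whose Laplacian has no singularities and can be computed explicitly, and then one has to justify the limit $\epsilon\to 0^+$ via monotone or dominated convergence on both sides of the identity. A secondary technical point is that the slice-wise reduction from $|\nabla f|$ to $|Rf|$ must not produce aperture-dependent constants blowing up with $n$; this requires using the one-dimensional Hardy--Stein inequality on $\D$ as a black-box input and appealing to the standard unitary invariance of $d\sigma$ to average the slice estimates uniformly in $a\in\Sn$.
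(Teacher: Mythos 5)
The paper offers no proof of Theorem~C: it is quoted as a known result with a pointer to \cite{ZhuBn}, so your attempt can only be measured against the standard literature argument, which is indeed the slice/Green's-identity route you outline. Your $p=2$ computation via homogeneous expansion is correct and complete. For general $p$, however, there is a concrete gap at the step where you assert that the kernel produced by Green's identity (equivalently, by the second radial integration) is ``$\asymp(1-|z|^2)$''. It is not: for the Euclidean Laplacian on $\Bn\subset\mathbb{R}^{2n}$ the kernel is comparable to $|z|^{2-2n}-1$ when $n\ge 2$ (to $\log(1/|z|)$ when $n=1$), which is $\asymp(1-|z|^2)$ only for, say, $|z|\ge 1/2$ and is singular at the origin. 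The same singular factor reappears in your slice reduction: the exact identity $\|f\|_{H^p}^p=\int_{\Sn}\|f_\xi\|^p_{H^p(\D)}\,d\sigma(\xi)$, the one-variable Hardy--Stein identity on each slice, and the polar/slice formula $\int_{\Bn}g\,dV=n\int_{\Sn}\int_{\D}g(\zeta a)|\zeta|^{2n-2}\,dA(\zeta)\,d\sigma(a)$ together give $\|f\|_{H^p}^p\asymp\int_{\Bn}|f|^{p-2}|Rf|^2(1-|z|^2)|z|^{-2n}\,dV$. Since $|z|^{-2n}\ge 1$, the inequality $\int_{\Bn}|f|^{p-2}|Rf|^2(1-|z|^2)\,dV\lesssim\|f\|_{H^p}^p$ comes for free, but the converse --- the substantive half of the theorem --- requires showing that the singularly weighted integral over $\{|z|<1/2\}$ is dominated by the unweighted integral over $\{|z|\ge 1/2\}$. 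That is the one genuinely nontrivial lemma (a sub-mean-value or monotonicity property of $r\mapsto\int_{\Sn}|f(r\xi)|^{p-2}|Rf(r\xi)|^2\,r^{-2}\,d\sigma(\xi)$, or an equivalent local estimate for $|f|^{p-2}|Rf|^2$), and your sketch never supplies it; it is not a routine application of Fubini.

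A secondary structural criticism: your plan first derives the statement with $|\nabla f|^2$ and then ``absorbs'' the tangential part into the radial part. As written this conceals a second hard inequality, namely $\int|f|^{p-2}|\nabla f|^2(1-|z|^2)\,dV\lesssim\int|f|^{p-2}|Rf|^2(1-|z|^2)\,dV$ (the reverse is immediate from $|Rf(z)|\le|z|\,|\nabla f(z)|$ by Cauchy--Schwarz). You do not need the gradient version at all: applying the one-dimensional identity slice by slice, exactly as in your own final step, produces $|Rf|$ directly, so the gradient detour and its unproved ``absorption'' should simply be dropped. Your regularization $u_\epsilon=(|f|^2+\epsilon^2)^{p/2}$ for $p<2$ is the right fix for the Laplacian computation, while the worry about aperture-dependent constants is vacuous here, since no Kor\'anyi regions enter this argument. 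In summary, the architecture is the standard one and is salvageable, but the origin-singularity lemma is a real missing step.
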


The next estimate of the section is the celebrated Calder\'on's area theorem \cite{Cal}, which was proven for $1<p<\infty$ by Marcinkiewicz and Zygmund \cite{MZ}. The variant we will use can be found in \cite{AB} or in \cite{P1}, where it is proven in a more general form.

\begin{otherth}\label{area}
Let $0<p<\infty$. If $f \in H$ and $f(0)=0$, then
$$\|f\|_{H^p}^p \asymp \int_{\Sn}\left(\int_{\Gamma(\xi)}|Rf(z)|^2 (1-|z|^2)^{1-n}dV(z)\right)^{p/2}d\sigma(\xi).$$
\end{otherth}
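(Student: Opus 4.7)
The plan is to reduce the general $0<p<\infty$ case to $p=2$ via a good-$\lambda$ comparison between the area function
$$S(f)(\xi)=\left(\int_{\Gamma(\xi)}|Rf(z)|^2(1-|z|^2)^{1-n}dV(z)\right)^{1/2}$$
and the admissible maximal function $N(f)(\xi)=\sup_{z\in\Gamma(\xi)}|f(z)|$. Since the non-tangential maximal theorem gives $\|N(f)\|_{L^p(\Sn)}\asymp \|f\|_{H^p}$ for all $0<p<\infty$ (with the hypothesis $f(0)=0$ used to discard the boundary term), it will suffice to establish the two-sided bound $\|S(f)\|_{L^p(\Sn)}\asymp \|N(f)\|_{L^p(\Sn)}$.

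First I would dispose of the case $p=2$, which follows directly from the ingredients already available. Applying Theorem \ref{HS} with $p=2$ and the hypothesis $f(0)=0$ gives
$$\|f\|_{H^2}^2\asymp \int_{\Bn}|Rf(z)|^2(1-|z|^2)\,dV(z).$$
Then insert this into formula (\ref{EqG}) with $\varphi(z)=|Rf(z)|^2$ and the finite positive measure $d\nu(z)=(1-|z|^2)\,dV(z)$, producing the factor $(1-|z|^2)^{1-n}$ inside the cone integral and yielding exactly $\int_{\Sn}S(f)(\xi)^2\,d\sigma(\xi)$. This completes $p=2$.

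For the remaining exponents, I would run the Fefferman--Stein-type good-$\lambda$ machinery adapted to the non-isotropic Kor\'anyi cones. Concretely, the aim is to prove that for some small $\beta>0$ and an absolute constant $C$,
$$\sigma\bigl\{\xi\in\Sn:\, S(f)(\xi)>2\lambda,\ N(f)(\xi)\leq\beta\lambda\bigr\}\leq C\beta^2\,\sigma\bigl\{\xi\in\Sn:\, S(f)(\xi)>\lambda\bigr\},$$
and the symmetric inequality with the roles of $S$ and $N$ exchanged. The proof is based on a Whitney-type decomposition of the open set $\{\xi:\, S(f)(\xi)>\lambda\}$ (respectively $\{\xi:\, N(f)(\xi)>\lambda\}$) into non-isotropic balls $B_{\delta}(\xi_j)$, and then, on the tent $\widehat{B_{\delta}(\xi_j)}$, freezing $f$ outside the tent to bound the truncated square function by the $L^2$-identity of Step 1 applied to a suitably modified function. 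The factor $\beta^2$ arises from the $N(f)\leq\beta\lambda$ restriction when estimating the local $L^2$ mass of $Rf$. Standard distribution-function integration $\int_0^\infty p\lambda^{p-1}\,d\lambda$ then converts each good-$\lambda$ bound into an $L^p$ inequality, giving $\|S(f)\|_{L^p}\lesssim\|N(f)\|_{L^p}$ and its converse.

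The main obstacle is the good-$\lambda$ step itself: in the ball, the Whitney decomposition has to be adapted to the non-isotropic metric $|1-\langle z,\xi\rangle|$ and the change-of-aperture embedding (\ref{setin}) must be carefully tracked so that the local area integrals and the auxiliary ``sawtooth'' truncations fit into a single fixed aperture $\widetilde{\Gamma}(\xi)$. Once the geometry is set up, the stopping-time argument and the $p=2$ identity provide the required local $L^2$ bound, and the passage to all $0<p<\infty$ is then automatic.
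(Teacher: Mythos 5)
First, a point of comparison: the paper does not actually prove Theorem \ref{area} --- it is quoted as a known background result (Calder\'on's area theorem, \cite{Cal}, \cite{MZ}), with the unit-ball variant attributed to \cite{AB} and \cite{P1} --- so there is no in-paper argument to measure yours against. Your overall strategy, namely settling $p=2$ by combining Theorem \ref{HS} with the Fubini identity \eqref{EqG} and then transferring to all $0<p<\infty$ through good-$\lambda$ inequalities between the area function $S(f)$ and the admissible maximal function $N(f)$, is exactly the classical Fefferman--Stein route (\cite{FS}) that the cited references adapt to the ball. Your $p=2$ step is complete and correct: $(1-|z|^2)\,dV(z)$ is a finite measure, so \eqref{EqG} applies and produces the weight $(1-|z|^2)^{1-n}$ on the cones.

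The gap is in the good-$\lambda$ core, which you leave as a black box except for one concrete device that would fail as stated: you cannot ``freeze $f$ outside the tent'' and then apply the $L^2$ identity of Step 1 to the ``suitably modified function,'' because truncating or cutting off a holomorphic function destroys holomorphy, and the $p=2$ identity is only available for holomorphic (or at least $\mathcal{M}$-harmonic) functions. What the actual proofs do instead is integrate by parts (Green's formula for the invariant Laplacian) over the sawtooth region $\bigcup_{\xi\in E}\Gamma(\xi)$ lying above the good set $E=\{N(f)\le\beta\lambda\}\cap B_\delta(\xi_j)$, using the local form of the subharmonicity identity behind Theorem \ref{HS}; the boundary terms are then controlled by $(\beta\lambda)^2$ precisely because $|f|\le\beta\lambda$ throughout the sawtooth, and the change of aperture \eqref{setin} is what lets the sawtooth over $E$ capture the cones $\Gamma(\xi)$ for $\xi\in E$. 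Note also that the two directions are not symmetric as your sketch suggests: the estimate $\|N(f)\|_{L^p(\Sn)}\lesssim\|S(f)\|_{L^p(\Sn)}$ --- which is where the hypothesis $f(0)=0$ actually enters (constants have $S(f)\equiv 0$), not in the maximal theorem --- requires a separate and genuinely harder argument than its converse. So your plan identifies the correct architecture, but the load-bearing step is only gestured at, and the one mechanism you propose for it is not available in the holomorphic category.
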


The above result can be formulated in terms of $R^k f$ for any $k\in \mathbb{N}$, or even more general fractional derivatives. See, for instance \cite{PP, Per}. We will not need such formulas in this paper.

Given a positive Borel measure $\mu$ on $\Bn$, then for $\xi \in \Sn$, we define
$$\widetilde{\mu}(\xi)=\int_{\Gamma(\xi)}\frac{d\mu(z)}{(1-|z|^2)^n}.$$

\noindent The following result is known as Luecking's theorem, and is originally from \cite{Lue1}. The present variant can be found in \cite{P1}, for instance.

\begin{otherth}\label{LT}
Let $0<s<p<\infty$ and let $\mu$ be a positive Borel measure on $\Bn$. Then the identity $I_ d:H^p\rightarrow L^s(\mu)$ is bounded, if and only if, the function $\widetilde{\mu}$
belongs to $L^{p/(p-s)}(\Sn)$. Moreover,  $\|I_d\|_{H^p\rightarrow L^s(\mu)}\asymp \|\widetilde{\mu}\|_{L^{p/(p-s)}(\Sn)}^{1/s}.$
\end{otherth}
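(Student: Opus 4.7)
The plan is to prove the two directions of the equivalence, sufficiency being the considerably easier one.

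For sufficiency, assume $\widetilde{\mu} \in L^{p/(p-s)}(\Sn)$ and take $f \in H^p$. The first step would be to apply the Fubini identity \eqref{EqG} to the measure $d\mu$ with integrand $\varphi(z) = |f(z)|^s$:
\begin{equation*}
\int_{\Bn} |f(z)|^s \, d\mu(z) \asymp \int_{\Sn} \int_{\Gamma(\xi)} |f(z)|^s \, \frac{d\mu(z)}{(1-|z|^2)^n} \, d\sigma(\xi).
\end{equation*}
Inside the inner integral, bound $|f(z)| \leq Nf(\xi)$ for every $z\in\Gamma(\xi)$, where $Nf$ is the admissible non-tangential maximal function, and pull the resulting factor $(Nf(\xi))^s$ outside. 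The inner integral then collapses to $\widetilde{\mu}(\xi)$, leaving $\int_{\Sn}(Nf)^s\,\widetilde{\mu}\,d\sigma$. H\"older's inequality with exponents $p/s$ and $p/(p-s)$ together with the classical bound $\|Nf\|_{L^p(\Sn)}\asymp \|f\|_{H^p}$ closes the estimate with the correct quantitative dependence.

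For necessity, assume $I_d$ is bounded with norm $C$ and aim to show $\widetilde{\mu}\in L^{p/(p-s)}(\Sn)$ with $\|\widetilde{\mu}\|^{1/s}_{L^{p/(p-s)}(\Sn)}\lesssim C$. The idea is to test the operator on a randomized family of analytic atoms. Fix a separated lattice $\{a_k\}\subset \Bn$, sufficiently dense so that the Bergman balls $D(a_k,r)$ cover $\Bn$ with bounded overlap, together with a subordinate disjoint partition $\{E_k\}$. For a sequence $\lambda = \{\lambda_k\}$ and independent Rademacher signs $\omega_k$, introduce the random function
\begin{equation*}
f_\omega(z) = \sum_k \omega_k\, \lambda_k\, \frac{(1-|a_k|^2)^{n/p}}{(1-\langle z,a_k\rangle)^{2n/p}},
\end{equation*}
verify that $\mathbb{E}_\omega\|f_\omega\|_{H^p}^p \lesssim \|\lambda\|^p_{T^p_s}$ for a suitable tent space of sequences $T^p_s$ built over the lattice, and apply Kahane--Khinchine to the $L^s(\mu)$-norm of $f_\omega$ to reduce it to the square function $\bigl(\sum_k |\lambda_k|^2(1-|a_k|^2)^{2n/p}|K(\,\cdot\,,a_k)|^2\bigr)^{1/2}$ integrated against $\mu$. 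Plugging both bounds into the assumed operator inequality and optimizing over $\lambda$ through the duality $(T^p_s)^*\sim T^{p'}_{s'}$ transfers the estimate to the discrete analogue of $\widetilde{\mu}$, which by a standard equivalence is comparable to $\|\widetilde{\mu}\|_{L^{p/(p-s)}(\Sn)}$.

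The main obstacle is this necessity direction, and specifically the interplay of three ingredients: the correct homogeneity of the kernel test functions so that their $H^p$ norms can be read off from a tent space of sequences, the two-sided application of Kahane--Khinchine (upper bound for the $H^p$ side, lower bound for the $L^s(\mu)$ side) that requires $s<p$, and the identification of the dual of $T^p_s$ that converts the extremal problem into an $L^{p/(p-s)}$ bound on the area function of $\mu$. Once the lattice bookkeeping and tent-space factorization are properly arranged, the rest is routine; this is essentially Luecking's argument from \cite{Lue1}, carried out in the ball geometry as in \cite{P1}.
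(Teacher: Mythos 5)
The paper does not actually prove Theorem \ref{LT}: it is quoted as Luecking's theorem with pointers to \cite{Lue1} and \cite{P1}, so there is no internal proof to compare yours against; the closest in-paper analogue of your strategy is the necessity half of Theorem \ref{th1}. Your sufficiency argument is correct and complete as sketched: \eqref{EqG}, the pointwise bound $|f(z)|\leq N[f](\xi)$ for $z\in\Gamma(\xi)$, H\"older with exponents $p/s$ and $p/(p-s)$, and Theorem \ref{NTMT} give $\|f\|_{L^s(\mu)}^s\lesssim\|\widetilde{\mu}\|_{L^{p/(p-s)}(\Sn)}\,\|f\|_{H^p}^s$.

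The necessity direction has the right skeleton (randomized kernel sums, Kahane--Khinchine, lattice discretization), but the step where you claim that ``the duality $(T^p_s)^*\sim T^{p'}_{s'}$ transfers the estimate to the discrete analogue of $\widetilde{\mu}$'' is precisely where the theorem is hard, and as stated it fails. What Khinchine's inequality plus the restriction of the square function to the single term with $z\in E_k$ actually yields is, with $\eta_k=|\lambda_k|^s$,
\begin{equation*}
\sum_k \eta_k\,(1-|a_k|^2)^{-ns/p}\mu(E_k)\;\lesssim\; \|I_d\|_{H^p\rightarrow L^s(\mu)}^s\,\|\eta\|_{T^{p/s}_{2/s}(Z)} .
\end{equation*}
Dualizing this via Theorem \ref{TTD1} places a weighted version of $\{\mu(E_k)\}$ in $T^{p/(p-s)}_{(2/s)'}(Z)$ --- inner exponent $2/(2-s)$ if $s<2$, inner supremum if $s\geq 2$ --- whereas the target statement $\widetilde{\mu}\in L^{p/(p-s)}(\Sn)$ is the inner-$\ell^1$ condition $\{(1-|a_k|^2)^{-n}\mu(E_k)\}\in T^{p/(p-s)}_1(Z)$. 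These are inequivalent conditions on a lattice (neither the inner exponent nor the weight matches), so the naive duality does not deliver the conclusion. Bridging this gap is exactly what the Cohn--Verbitsky type factorization $T^{p}_{q}(Z)=T^{p_1}_{q_1}(Z)\cdot T^{p_2}_{q_2}(Z)$ of Proposition \ref{factor} is for: one realizes a suitable power of the target tent space as the dual of a product, factors an arbitrary element of the predual, and distributes the H\"older exponents so that the inner-$\ell^2$ estimate above can be applied to one factor; this is how the authors argue in Theorems \ref{th1} and \ref{th11}, and Luecking's original proof in \cite{Lue1} uses a comparably nontrivial device. You mention ``tent-space factorization'' in passing at the very end, but the mechanism you actually describe is the wrong duality. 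A smaller inaccuracy: the averaged Hardy-norm bound for the random sums is $\int_0^1\|f_t\|_{H^p}^p\,dt\lesssim\|\lambda\|_{T^p_2(Z)}^p$, with inner exponent $2$ (coming from Theorem \ref{area}), not a space ``$T^p_s$''.
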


\subsection{Poisson transform and maximal functions}

Let us recall the invariant Poisson transform (see Chapter 4.1 of \cite{ZhuBn})
$$P[f](z)=\int_{\Sn}\frac{(1-|z|^2)^n}{|1-\langle z,\xi\rangle|^{2n}}f(\xi)d\sigma(\xi),$$
which is defined for $f \in L^1(\Sn)$.

Given $\delta>0$ and $\xi \in \Sn$, we define
$$I(\xi,\delta)=\{z \in \Sn: |1-\langle z,\xi\rangle|<\delta^2\},$$
and for $f \in L^1(\Sn)$ let $M$ denote the Hardy-Littlewood type maximal function
$$M[f](\xi)=\sup_{\delta>0}\frac{1}{\sigma(I(\xi,\delta))}\int_{I(\xi,\delta)}|f(\zeta)|d\sigma(\zeta).$$

The classical Hardy-Littlewood theorem (see Theorem 4.9 of \cite{ZhuBn} for this version) states that
\begin{equation}\label{hlm}
\|M[f]\|_{L^p(\Sn)}\lesssim \|f\|_{L^p(\Sn)},\quad p \in (1,\infty).
\end{equation}

We will need another kind of maximal function. For a continuous $f:\Bn\to \mathbb{C}$, we define the admissible (non-tangential) maximal function $N[f]$ by
$$N[f](\xi)=\sup_{z \in \Gamma(\xi)}|f(z)|.$$

Let us state the following well-known result on the $L^p$-boundedness of the admissible maximal function that can be found in \cite[Theorem 5.6.5]{Rud} or \cite[Theorem 4.24]{ZhuBn}.

\begin{otherth}\label{NTMT}
Let $0<p<\infty$ and $f\in H$. Then
$$ \|N[f]\|_{L^p(\Sn)}\lesssim \|f\|_{H^p}.$$
\end{otherth}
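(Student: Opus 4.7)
The plan is to reduce the estimate to the Hardy--Littlewood maximal inequality \eqref{hlm} via a pointwise comparison $N[f](\xi)^s \lesssim M[|f^*|^s](\xi)$, where $f^*$ denotes the boundary values of $f$ and $s=p/2$. The uniform choice $s=p/2$ is convenient because the resulting target exponent $p/s=2$ is always strictly greater than $1$, so \eqref{hlm} applies regardless of whether $p$ is large or small.

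First, since $f\in H^p$, classical Hardy space theory provides a boundary function $f^*\in L^p(\Sn)$ with $\|f^*\|_{L^p(\Sn)}\asymp\|f\|_{H^p}$. Since $f$ is holomorphic, $|f|^s$ is subharmonic (in the invariant sense attached to the invariant Poisson kernel $P$), and $|f^*|^s\in L^{p/s}(\Sn)=L^2(\Sn)\subset L^1(\Sn)$ because $\Sn$ has finite measure. The standard subharmonic majoration then yields
$$|f(z)|^s\leq P[|f^*|^s](z),\qquad z\in\Bn.$$

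The heart of the argument is the pointwise bound
$$P[g](z)\lesssim M[g](\xi)\qquad\text{whenever } z\in\Gamma(\xi),\ g\in L^1(\Sn),\ g\geq 0.$$
I would prove this by setting $\delta_0=\sqrt{1-|z|^2}$ and decomposing $\Sn$ as the disjoint union of the innermost piece $I(\xi,\delta_0)$ and the dyadic annuli $I(\xi,2^k\delta_0)\setminus I(\xi,2^{k-1}\delta_0)$ for $k\geq 1$. The non-tangentiality $|1-\langle z,\xi\rangle|<1-|z|^2\asymp\delta_0^2$, together with the (quasi-)triangle inequality for $|1-\langle\cdot,\cdot\rangle|^{1/2}$, yields $|1-\langle z,\zeta\rangle|\gtrsim(2^k\delta_0)^2$ on the $k$-th annulus. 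Combined with $\sigma(I(\xi,2^k\delta_0))\asymp(2^k\delta_0)^{2n}$, the Poisson kernel $(1-|z|^2)^n/|1-\langle z,\zeta\rangle|^{2n}$ contributes at most $C\cdot 2^{-4nk}\delta_0^{-2n}$, so the $k$-th piece is bounded by a constant multiple of $2^{-2nk}M[g](\xi)$; summing the geometric series over $k\geq 0$ closes the estimate.

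Combining the two inputs gives $N[f](\xi)^s\lesssim M[|f^*|^s](\xi)$ pointwise on $\Sn$. Taking $L^{p/s}$-norms and applying \eqref{hlm} at exponent $p/s=2>1$, I obtain
$$\|N[f]\|_{L^p(\Sn)}^s=\bigl\|N[f]^s\bigr\|_{L^{p/s}(\Sn)}\lesssim\bigl\|M[|f^*|^s]\bigr\|_{L^{p/s}(\Sn)}\lesssim\bigl\||f^*|^s\bigr\|_{L^{p/s}(\Sn)}=\|f^*\|_{L^p(\Sn)}^s\asymp\|f\|_{H^p}^s,$$
which is the desired conclusion. The main technical obstacle is the Poisson-to-maximal-function estimate on $\Gamma(\xi)$: it rests on the non-isotropic geometry of $\Sn$ and is where the admissible aperture of $\Gamma(\xi)$ plays its essential role. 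Once this estimate is in hand, the subharmonic reduction and Hardy--Littlewood furnish the rest routinely.
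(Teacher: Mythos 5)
The paper does not actually prove this statement; it is quoted as background from \cite{Rud} and \cite{ZhuBn}, and your argument is essentially the standard proof given in those sources, so there is no competing in-paper proof to compare against. One structural remark: your Poisson-to-maximal step is precisely Theorem \ref{NPM} of the paper, $N[P[g]]\lesssim M[g]$, which you could simply have invoked. Your dyadic-annulus derivation of it is nevertheless correct: on the $k$-th annulus the quasi-triangle inequality for $|1-\langle\cdot,\cdot\rangle|^{1/2}$ gives $|1-\langle z,\zeta\rangle|\gtrsim 4^k(1-|z|^2)$, so the kernel is $\lesssim 2^{-4nk}(1-|z|^2)^{-n}$, while the mass of $g$ on $I(\xi,2^k\delta_0)$ is $\lesssim 2^{2nk}(1-|z|^2)^n M[g](\xi)$, and $\sum_k 2^{-2nk}$ converges. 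The final appeal to \eqref{hlm} at exponent $p/s=2>1$ is also fine.

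The one genuine caveat is your starting point: you launch the argument from the boundary function $f^*$ together with $\|f^*\|_{L^p(\Sn)}\asymp\|f\|_{H^p}$. In the usual logical development of Hardy spaces on the ball (Chapter 5 of \cite{Rud}), the a.e.\ existence of admissible limits and the identity $\|f^*\|_{L^p}=\|f\|_{H^p}$ are \emph{consequences} of the maximal theorem you are proving, so as written the argument is circular unless those facts are imported from an independent source. The standard repair is cheap and preserves every other line: since the functions $|f_r|^{p/2}$ are bounded in $L^2(\Sn)$ by $\|f\|_{H^p}^{p/2}$, choose $r_j\to1$ with $|f_{r_j}|^{p/2}\to g$ weakly in $L^2(\Sn)$, so that $\|g\|_{L^2}^2\leq\|f\|_{H^p}^p$; pass to the limit in the subharmonic majorization $|f(r_jz)|^{p/2}\leq P[|f_{r_j}|^{p/2}](z)$ (the Poisson kernel at fixed $z$ is a bounded, hence $L^2$, function of $\zeta$) to obtain $|f(z)|^{p/2}\leq P[g](z)$, and then run your Poisson-to-maximal and Hardy--Littlewood steps on $g$ instead of $|f^*|^{p/2}$. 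With that substitution the proof is complete and coincides with the classical one.
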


Finally, the following theorem (Theorem 4.10 of \cite{ZhuBn}) connects the Poisson transform and the two maximal functions.

\begin{otherth}\label{NPM}
For $f \in L^1(\Sn)$, we have
$$N[P[f]](\xi)\lesssim M[f](\xi).$$
\end{otherth}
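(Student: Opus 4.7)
The plan is to establish the pointwise inequality $|P[f](z)| \lesssim M[f](\xi)$ for every $z \in \Gamma(\xi)$, with an implicit constant independent of $z$, $\xi$ and $f$; taking the supremum over $z \in \Gamma(\xi)$ then immediately gives the conclusion. So fix $\xi \in \Sn$ and $z \in \Gamma(\xi)$, and set $\delta = (1-|z|^2)^{1/2}$, so that by the definition of the admissible approach region $|1-\langle z,\xi\rangle|^{1/2} < \delta$.

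Starting from
$$|P[f](z)| \leq \int_{\Sn}\frac{(1-|z|^2)^n}{|1-\langle z,\eta\rangle|^{2n}}|f(\eta)|d\sigma(\eta),$$
I would perform a dyadic decomposition of $\Sn$ with respect to the Kor\'anyi pseudodistance $d(u,v) = |1-\langle u,v\rangle|^{1/2}$ centered at $\xi$: set $E_0 = I(\xi, c\delta)$ and $E_k = I(\xi, 2^k c \delta) \setminus I(\xi, 2^{k-1}c\delta)$ for $k \geq 1$, where $c>1$ is a fixed large constant, terminating the decomposition once $2^k c \delta \gtrsim 1$. The key geometric input is the quasi-triangle inequality $d(\xi,\eta) \leq C(d(z,\xi) + d(z,\eta))$, valid on $\overline{\Bn}$; combined with $d(z,\xi) < \delta$, it yields $|1-\langle z,\eta\rangle| \gtrsim (2^k\delta)^2$ for $\eta \in E_k$ with $k \geq 1$, provided $c$ is chosen large enough. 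On $E_0$, one uses instead the elementary bound $|1-\langle z,\eta\rangle| \geq 1 - |z| \asymp \delta^2$.

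Using in addition the standard estimate $\sigma(I(\xi, r)) \asymp r^{2n}$, each annulus contributes
$$\int_{E_k}\frac{(1-|z|^2)^n}{|1-\langle z,\eta\rangle|^{2n}}|f(\eta)|d\sigma(\eta) \lesssim \frac{\delta^{2n}}{(2^k\delta)^{4n}} \cdot (2^k\delta)^{2n} M[f](\xi) = 2^{-2kn}M[f](\xi),$$
since $\int_{E_k}|f|d\sigma \leq \sigma(I(\xi, 2^k c \delta)) M[f](\xi) \asymp (2^k \delta)^{2n} M[f](\xi)$ directly from the definition of $M$. The analogous computation on $E_0$ produces the bound $M[f](\xi)$, because $(1-|z|^2)^n/|1-\langle z,\eta\rangle|^{2n} \lesssim \delta^{-2n}$ there, while the mass of $f$ on $E_0$ is again controlled by $\delta^{2n} M[f](\xi)$. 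Summing the resulting geometric series in $k$ closes the estimate.

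The main obstacle is the quasi-triangle inequality for $d$, together with choosing the dyadic parameter $c$ large enough to handle the innermost annuli, where the non-tangential proximity of $z$ to $\xi$ has not yet produced genuine separation between $z$ and $\eta$. Both are standard but delicate; in practice one absorbs the first few annuli into the ``central'' piece $E_0$ by enlarging $c$, and the resulting bound still sums to a finite multiple of $M[f](\xi)$ independent of $z$ and $\xi$.
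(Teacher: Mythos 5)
Your proof is correct: the triangle inequality for $d(u,v)=|1-\langle u,v\rangle|^{1/2}$ on $\overline{\Bn}$ is a genuine (not just quasi-) triangle inequality, the annulus-by-annulus kernel bound $\delta^{2n}(2^k\delta)^{-4n}$ paired with $\sigma(I(\xi,2^kc\delta))\asymp(2^kc\delta)^{2n}$ gives the summable factor $2^{-2kn}$, and the central piece is handled by $|1-\langle z,\eta\rangle|\gtrsim 1-|z|\asymp\delta^2$. The paper does not prove this statement at all --- it cites it as Theorem 4.10 of \cite{ZhuBn} --- and your dyadic decomposition is essentially the standard argument given there, so nothing further is needed.
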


\subsection{Kahane-Khinchine inequalities}

Consider a sequence of Rademacher functions $r_ k(t)$ (see \cite[Appendix A]{duren1}). For almost every $t\in (0,1)$ the sequence $\{r_ k(t)\}$ consists of signs $\pm 1$. We state first the classical Khinchine's inequality (see \cite[Appendix A]{duren1} for example).\medskip

\noindent \textbf{Khinchine's inequality:} Let $0<p<\infty$. Then for any sequence $\{c_k\}$ of complex numbers, we have
\begin{equation}\label{khin}
\left(\sum_k |c_k|^2\right)^{p/2}\asymp \int_0^1 \left|\sum_k c_kr_k(t)\right|^p dt.
\end{equation}

The next result is known as Kahane's inequality, and it will be usually applied in connection to Khinchine's inequality. For reference, see for instance Lemma 5 of Luecking \cite{Lue2} or the paper of Kalton \cite{Kal}.\medskip

\noindent \textbf{Kahane's inequality:} Let $X$ be a Banach space,  and $0<p,q<\infty$. For any sequence $\{x_ k\}\subset X$, one has
\begin{equation}\label{kah}
\left ( \int_{0}^1 \Big \|\sum_ k r_ k(t)\, x_ k \Big \|_{X}^q dt\right )^{1/q} \asymp \left ( \int_{0}^1 \Big \|\sum_ k r_ k(t)\, x_ k \Big \|_{X}^p dt\right )^{1/p}.
\end{equation}
Moreover, the implicit constants can be chosen to depend only on $p$ and $q$, and not on the Banach space $X$.

\subsection{Separated sequences and lattices}

Let $\beta(z,w)$ denote the Bergman metric on $\Bn$, and $D(a,r)=\{z\in \Bn : \beta(a,z)<r\}$ be the Bergman metric ball of radius $r>0$ centered at a point $a\in \Bn$.
A sequence of points $Z=\{a_k\}\subset \Bn$ is said to be separated if there exists $\delta>0$ such that $\beta(a_ i,a_ j)\geq \delta$ for all $i$ and $j$ with $i\neq j$. This implies that there is $r>0$ such that the Bergman metric balls $D(a_k,r)=\{z\in \Bn :\beta(z,a_k)<r\}$ are pairwise disjoint.

We need a well-known
result on decomposition of the unit ball $\Bn$.
By Theorem 2.23 in \cite{ZhuBn},
there exists a positive integer $N$ such that for any $0<r<1$ we can
find a sequence $\{a_k\}$ in $\Bn$ with the following properties:
\begin{itemize}
\item[(i)  ] $\Bn=\bigcup_{k}D(a_k,r)$;
\item[(ii) ] The sets $D(a_k,r/4)$ are mutually disjoint;
\item[(iii) ] Each point $z\in\Bn$ belongs to at most $N$ of the sets $D(a_k,4r)$.
\end{itemize}

\noindent Any sequence $\{a_k\}$ satisfying the above conditions is called
an $r$-\emph{lattice}
(in the Bergman metric). Obviously any $r$-lattice is a separated sequence.

Our main application of separated sequences is in context of the following important result.  It is essentially due to Coifman and Rochberg \cite{CR}, and can be found in Theorem 2.30 of \cite{ZhuBn}. Note that we only need one part of the cited theorem, and it is easily seen to be valid for all separated sequences.

\begin{otherth}
Let $0<p<\infty$, $\alpha>-1$, and
$s>n\max\{1,1/p\}-n/p.$
For any separated sequence $\{a_k\}$, and $\lambda=\{\lambda_k\} \in \ell^p$, the function
$$f(z)=\sum_{k}\lambda_k \frac{(1-|a_k|^2)^s}{(1-\langle z,a_k\rangle)^{s+\frac{n+1+\alpha}{p}}}$$
belongs to $A^p_\alpha$, where the series converges in the quasinorm topology of $A^p_\alpha$. Moreover,
$\|f\|_{A^p_\alpha}\lesssim \|\lambda\|_{\ell^p}.$
\end{otherth}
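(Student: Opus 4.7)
The plan is to reduce both cases to the standard Forelli--Rudin integral estimate
\[
\int_{\Bn}\frac{(1-|w|^2)^c \, dV(w)}{|1-\langle z,w\rangle|^{n+1+c+t}}\asymp (1-|z|^2)^{-t},\qquad c>-1,\ t>0,
\]
handling the regimes $0<p\le 1$ and $p>1$ separately. Throughout, I write $h_k(z):=(1-|a_k|^2)^{s}/|1-\langle z,a_k\rangle|^{s+(n+1+\alpha)/p}$, so that $|f(z)|\le \sum_k |\lambda_k|\, h_k(z)$.

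For $0<p\le 1$ the condition on $s$ becomes $s>0$. I would use the elementary inequality $(\sum_k|x_k|)^p\le \sum_k|x_k|^p$, integrate termwise against $dV_\alpha$, and apply the Forelli--Rudin estimate with $t=sp>0$ to obtain
\[
\int_{\Bn}\frac{(1-|z|^2)^\alpha \, dV(z)}{|1-\langle z,a_k\rangle|^{sp+n+1+\alpha}}\asymp (1-|a_k|^2)^{-sp}.
\]
The factor $(1-|a_k|^2)^{sp}$ arising from $|h_k|^p$ cancels this exactly, so $\|f\|_{A^p_\alpha}^p\lesssim \sum_k|\lambda_k|^p$.

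For $p>1$ the hypothesis becomes $s>n/p'$. Here I introduce an auxiliary weight $(1-|a_k|^2)^{\gamma}$, with a parameter $\gamma$ to be chosen, and apply H\"older's inequality on the counting measure:
\[
|f(z)|\le \left(\sum_k |\lambda_k|^p h_k(z)(1-|a_k|^2)^{-\gamma(p-1)}\right)^{1/p}\left(\sum_k h_k(z)(1-|a_k|^2)^{\gamma}\right)^{1/p'}.
\]
The second factor is controlled by comparing the sum to an integral over $\Bn$: using the disjointness of the balls $D(a_k,r/4)$, together with $V(D(a_k,r/4))\asymp (1-|a_k|^2)^{n+1}$ and the standard estimates $|1-\langle z,w\rangle|\asymp|1-\langle z,a_k\rangle|$, $(1-|w|^2)\asymp(1-|a_k|^2)$ for $w\in D(a_k,r/4)$, one gets
\[
\sum_k h_k(z)(1-|a_k|^2)^{\gamma}\lesssim \int_{\Bn}\frac{(1-|w|^2)^{s+\gamma-n-1}\,dV(w)}{|1-\langle z,w\rangle|^{s+(n+1+\alpha)/p}},
\]
and a first application of Forelli--Rudin bounds this by a constant times $(1-|z|^2)^{\gamma-(n+1+\alpha)/p}$. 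Raising to the $p$-th power, integrating against $dV_\alpha$, swapping sum and integral by Tonelli, and applying Forelli--Rudin a second time to each summand, the resulting weights of $(1-|a_k|^2)$ combine and cancel to leave $\|f\|_{A^p_\alpha}^p\lesssim \sum_k|\lambda_k|^p$.

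The main obstacle is the bookkeeping of exponents in the $p>1$ step: one must check that the hypothesis $s>n/p'$ is precisely what allows $\gamma$ to be chosen so that both applications of Forelli--Rudin are legal, i.e., the four inequalities of the form $c>-1$ and $t>0$ that arise (namely $s+\gamma>n$, $\gamma<(n+1+\alpha)/p$, $\gamma(p-1)<s$, and the matching lower bound on $c$ in the outer integral) cut out a nonempty interval for $\gamma$. Once this elementary verification is done, the cancellations are algebraic and deliver the stated bound $\|f\|_{A^p_\alpha}\lesssim \|\lambda\|_{\ell^p}$; convergence of the series in the quasinorm topology is then a routine tail estimate from the same chain of inequalities.
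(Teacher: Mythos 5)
Your argument is correct, and in fact the paper offers no proof of this statement at all: it is quoted as a known result of Coifman--Rochberg (Theorem 2.30 in Zhu's book), and the proof given there is essentially the one you describe --- subadditivity of $t\mapsto t^p$ plus one Forelli--Rudin estimate when $0<p\le 1$, and for $p>1$ the H\"older trick with an auxiliary weight $(1-|a_k|^2)^{\gamma}$, a sum-to-integral comparison via the disjoint Bergman balls of a separated sequence, and two applications of Forelli--Rudin. Your four constraints on $\gamma$ do cut out a nonempty interval: the only nontrivial compatibility is $n-s<s/(p-1)$, which is exactly $s>n(p-1)/p=n/p'$, while the remaining pairs of bounds are compatible automatically because $1+\alpha>0$; so the verification you defer does go through, and the tail estimate for quasinorm convergence follows from the same chain applied to $\{\lambda_k\}_{k>N}$.
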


\subsection{Tent spaces}

Tent spaces were introduced in the paper of Coifman, Meyer and Stein \cite{CMS} to study problems in harmonic analysis. They provide us with simple, yet general framework for questions regarding important spaces such Hardy spaces, Bergman spaces and BMOA. Luecking \cite{Lue1} used tent spaces to study embedding theorems for Hardy spaces on $\mathbb{R}^n$. These results have been translated to $\Bn$ by Arsenovic and Jevtic \cite{Ars, Jev}.

Let $0<p,q<\infty$ and $\nu$ be a positive Borel measure. The tent space $T^{p}_{q,\nu}$ consists of $\nu$-measurable functions $f$ with
$$\|f\|_{T^{p}_{q,\nu}}^p : =\int_{\Sn} \left(\int_{\Gamma(\xi)} |f(z)|^q d\nu(z)\right)^{\frac{p}{q}}d\sigma(\xi)<\infty.$$
Analogously, the space $T^{p}_{\infty,\nu}$ consists of $\nu$-measurable $f$ with
$$\|f\|_{T^{p}_{\infty,\nu}}^p : =\int_{\Sn}\left(\operatorname{ess}\sup_{z \in \Gamma(\xi)}|f(z)|\right)^p d\sigma(\xi)<\infty.$$
The essential supremum is taken with respect to the measure $\nu$. The case $p=\infty$ is different. For non-zero $u\in\Bn$, we define $\zeta_u=u/|u|$ and set $Q(u)=\{z\in \Bn: |1-\langle z,\zeta_u\rangle|<1-|u|^2\}$. We agree that $Q(0)=\Bn$. The space $T_{q,\nu}^\infty$ consists of $\nu$-measurable functions $f$ with
$$\|f\|_{T_{q,\nu}^\infty}=\operatorname{ess} \sup_{\xi\in\Sn}\left(
\sup_{u\in\Gamma(\xi)}{1\over (1-|u|^2)^n}\int_{Q(u)}|f(z)|^q(1-|z|^2)^{n}d\nu(z)\right)^{1/q}<\infty.$$
By comparing with the discussion in Section 5.2 of \cite{ZhuBn}, we notice that $f\in T_{q,\nu}^\infty$ if and only if $d\mu_{f}(z)=(1-|z|^2)^{n}|f(z)|^q d\nu(z)$ is a Carleson measure on $\Bn$. Moreover, $\|f\|_{T^{\infty}_{q,\nu}} \asymp \|\mu_{f}\|_{CM_ 1}^{1/q}$.

The aperture $\gamma>0$ of the Kor\'anyi region is suppressed from the notation, and it is well-known that any two apertures generate the same function space with equivalent quasinorms. When $\nu(z)=(1-|z|^2)^\alpha$, we write $T^p_{q,\nu}=T^p_{q,\alpha}$. {The spaces $T^p_{\infty,\alpha}$ are obviously independent of $\alpha$}, and we simpy write $T^p_\infty$. Note that Theorem \ref{area} states that $f \in H$ belongs to $H^p$ if and only if $Rf \in T^p_{2,1-n}$, and Fubini's theorem shows that $A^p_\alpha$ consists of holomorphic functions in $T^p_{p,\alpha-n}$. This explains the special role of the number  $2$ in Theorem \ref{main}.

We will employ a discretization scheme. For this purpose a particularly important case is given when $\nu=\sum_k \delta_{a_k}$, where $Z=\{a_k\}$ is a separated sequence and $\delta_{a_k}$ are the usual Dirac point masses at points $a_k$. We denote $f(a_k)=\lambda_k$, and say that $\lambda=\{\lambda_k\} \in T^{p}_q(Z)$, if
$$\|\lambda\|_{T^{p}_q(Z)}^p : =\int_{\Sn} \left(\sum_{a_k \in \Gamma(\xi)}|\lambda_k|^q  \right)^{\frac{p}{q}}d\sigma(\xi)<\infty,$$
when $0<p,q<\infty$, and $\lambda=\{\lambda_k\} \in T^{p}_\infty (Z)$, if
$$\|\lambda\|_{T^{p}_\infty (Z)}^p : =\int_{\Sn}\left(\sup_{a_k \in \Gamma(\xi)}|\lambda_k|\right)^p d\sigma(\xi)<\infty.$$

\noindent Finally, $\lambda=\{\lambda_k\} \in T^{\infty}_q(Z)$, if
$$\|\lambda\|_{T_{q}^\infty(Z)}=\operatorname{ess} \sup_{\xi\in\Sn}\left(
\sup_{u\in\Gamma(\xi)}{1\over (1-|u|^2)^n}\sum_{a_k \in Q(u)}|\lambda_k|^q (1-|a_k|^2)^{n}\right)^{1/q}<\infty.$$
As before, we have that $\lambda \in T^{\infty}_q(Z)$ if and only if the measure $d\mu_{\lambda}=\sum_ k |\lambda_ k|^q (1-|a_ k|^2)^n \delta_{a_ k}$ is a Carleson measure. Moreover,  $\|\lambda\|_{T_{q}^\infty(Z)}\asymp \|\mu_{\lambda}\|_{CM_ 1}^{1/q}$. \\

\noindent We will need the following duality result for the tent spaces of sequences. For the proof, see \cite{Ars, Jev, Lue1}.

\begin{otherth}\label{TTD1}
Let $1<p<\infty$ and $Z=\{a_k\}$ be a separated sequence. If $1<q<\infty$, then the dual of $T^p_q(Z)$ is isomorphic to $T^{p'}_{q'}(Z)$ under the pairing:
$$\langle \lambda ,\mu \rangle_{T^2_2(Z)} =\sum_ k \lambda _ k \,\overline{\mu_ k} (1-|a_ k|^2)^n,\quad \lambda \in T^p_q(Z),\quad \mu \in T^{p'}_{q'}(Z).$$
If $0<q\leq 1$, then the dual of $T^p_q(Z)$ is isomorphic to $T^p_\infty(Z)$ under the same pairing.
\end{otherth}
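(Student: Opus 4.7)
The proof plan is to follow the classical tent space duality arguments of Coifman--Meyer--Stein and Luecking, specialized to the discrete sequence setting via the Fubini-type identity (EqG). In both cases I would split into the \emph{easy direction} (the candidate dual embeds continuously into $(T^p_q(Z))^*$) and the \emph{hard direction} (every bounded functional on $T^p_q(Z)$ arises from the stated pairing). The common starting point in either case is equation (EqG) applied to the measure $\nu=\sum_k (1-|a_k|^2)^n \lambda_k\overline{\mu_k}\,\delta_{a_k}$ (decomposed into real positive parts) with $\varphi\equiv 1$, which converts the pairing into
$$
\sum_k \lambda_k\overline{\mu_k}(1-|a_k|^2)^n \;\asymp\; \int_{\Sn}\sum_{a_k \in \Gamma(\xi)}\lambda_k\overline{\mu_k}\,d\sigma(\xi).
$$

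For the case $1<q<\infty$, the easy direction then follows by applying H\"older's inequality twice on the right-hand side: first with conjugate exponents $(q,q')$ on the inner cone sum, then with $(p,p')$ on the integral over $\Sn$, giving $|\langle\lambda,\mu\rangle|\lesssim \|\lambda\|_{T^p_q(Z)}\|\mu\|_{T^{p'}_{q'}(Z)}$. For the hard direction, given a bounded functional $L$ on $T^p_q(Z)$ I would set $\mu_k=\overline{L(e_k)}\,(1-|a_k|^2)^{-n}$, where $e_k$ is the canonical unit sequence; density of finitely supported sequences in $T^p_q(Z)$ recovers the pairing formula. To verify $\mu\in T^{p'}_{q'}(Z)$, I would use the isometric embedding of $T^p_q(Z)$ into the mixed-norm Lebesgue space $L^p(\Sn;\ell^q)$ via $\lambda\mapsto \bigl(1_{\Gamma(\xi)}(a_k)\lambda_k\bigr)_{k}$, whose dual is the corresponding $L^{p'}(\Sn;\ell^{q'})$-space; Hahn--Banach then identifies $\mu$ with a genuine element of $T^{p'}_{q'}(Z)$.

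For the case $0<q\le 1$, the easy direction instead exploits the quasi-norm embedding $\|\cdot\|_{\ell^1}\le \|\cdot\|_{\ell^q}$ valid for $q\le 1$, so that
$$
\left|\sum_{a_k \in \Gamma(\xi)}\lambda_k\overline{\mu_k}\right| \;\le\; \sup_{a_j \in \Gamma(\xi)}|\mu_j|\,\cdot\,\Bigl(\sum_{a_k \in \Gamma(\xi)}|\lambda_k|^q\Bigr)^{1/q},
$$
and then H\"older's inequality on $\Sn$ yields the pairing bound in terms of the non-tangential maximal function of $\mu$ and the area-type expression for $\lambda$. The converse again defines $\mu_k$ through the action of $L$ on basis vectors, and then controls the non-tangential maximal function of $\mu$ by testing $L$ against sequences adapted to individual Kor\'anyi cones.

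The main obstacle I expect is the representation step when $q\le 1$, because $T^p_q(Z)$ is only quasi-Banach and the clean mixed-norm duality used in the reflexive case is no longer available. Handling it requires an atomic decomposition of $T^p_q(Z)$ in the spirit of Coifman--Meyer--Stein, combined with a stopping-time argument that assembles local estimates for $L(e_k)$ on each cone into a global $L^p$ bound on $N_\mu$. Getting the Whitney-type structure of the lattice $Z$ to interact correctly with the tent-sets used for the atoms is the technical point that makes the $q\le 1$ range genuinely harder than the $1<q<\infty$ range.
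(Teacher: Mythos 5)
The first thing to say is that the paper contains no proof of Theorem~\ref{TTD1}: it is quoted as a known background result with the proof deferred to \cite{Ars, Jev, Lue1}, so there is no internal argument to compare yours against. Your plan follows the same classical Coifman--Meyer--Stein/Luecking route that those references take --- use the Fubini identity \eqref{EqG} to pass between the weighted sum $\sum_k|\lambda_k||\mu_k|(1-|a_k|^2)^n$ and the cone sums, H\"older for the continuous embedding of the candidate dual, and Hahn--Banach plus a representation step for the converse --- so strategically it is the expected argument and the easy directions are fine as written.

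Two points in the hard directions need repair. First, for $1<q<\infty$ the phrase ``Hahn--Banach then identifies $\mu$ with a genuine element of $T^{p'}_{q'}(Z)$'' hides the actual crux. The extension produces some $g=(g_k)\in L^{p'}(\Sn;\ell^{q'})$, and restricting to the image of $T^p_q(Z)$ gives $\overline{\mu_k}\,(1-|a_k|^2)^n=\int_{I(a_k)}\overline{g_k}\,d\sigma$, so $|\mu_k|$ is comparable to the average of $|g_k|$ over $I(a_k)$. To conclude $\mu\in T^{p'}_{q'}(Z)$ you must prove that this averaging maps $L^{p'}(\Sn;\ell^{q'})$ boundedly into the tent space; since $a_k\in\Gamma(\xi)$ forces $\xi\in I(a_k)$ and $\sigma(I(a_k))\asymp(1-|a_k|^2)^n$, the average is dominated by $M[|g_k|](\xi)$, and the required bound is precisely the Fefferman--Stein vector-valued maximal inequality (valid because $1<p',q'<\infty$, which is exactly where the hypothesis $1<p,q<\infty$ enters). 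This step must be stated, not absorbed into ``Hahn--Banach''. Second, your own H\"older computation in the case $0<q\le 1$ (sup over the cone against the $\ell^q$ quasinorm, then H\"older with exponents $(p,p')$ on $\Sn$) yields the bound $\|\mu\|_{T^{p'}_\infty(Z)}\|\lambda\|_{T^p_q(Z)}$, with $p'$ and not $p$. That is the correct exponent: the statement as printed contains a typo ($T^p_\infty(Z)$ should read $T^{p'}_\infty(Z)$), as one can check from how the duality is actually invoked in the proofs of Theorems~\ref{th1} and~\ref{th11}, where $(T^{p_0}_{q_0}(Z))^*=T^{p_0'}_\infty(Z)$ is what is used. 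You should prove and state the corrected version. For the converse in that range, the standard device in the discrete setting is lighter than the atomic decomposition you anticipate: choose a measurable selection $\xi\mapsto k(\xi)$ nearly realizing $\sup_{a_k\in\Gamma(\xi)}|\mu_k|$, disjointify the sets $E_k=\{\xi: k(\xi)=k\}\subset I(a_k)$, and test $L$ against $\lambda_k\asymp(1-|a_k|^2)^{-n}\int_{E_k}h\,d\sigma$ for nonnegative $h\in L^p(\Sn)$; the separation of $Z$ and the disjointness of the $E_k$ are what make the resulting $T^p_q(Z)$ norm of this test sequence controllable by $\|h\|_{L^p(\Sn)}$.
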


\subsection{Estimates involving Bergman kernels}

Let us next recall the following well-known (Forelli-Rudin) integral estimates that have become very useful in this area of analysis (see \cite[Theorem 1.12]{ZhuBn} for example).
\begin{otherl}\label{IctBn}
Let $t>-1$ and $s>0$. Then
$$\int_{\Sn} \frac{d\sigma(\xi)}{|1-\langle z,\xi\rangle |^{n+s}} \lesssim (1-|z|^2)^{-s}$$
and
$$ \int_{\Bn} \frac{(1-|u|^2)^t\,dV(u)}{|1-\langle z,u\rangle |^{n+1+t+s}}\lesssim (1-|z|^2)^{-s}$$
for all $z\in \Bn$.
\end{otherl}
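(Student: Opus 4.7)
The plan is to establish the sphere estimate first, then deduce the ball estimate from it via polar coordinates. For the sphere estimate, unitary invariance lets me assume $z=re_1$ with $r=|z|\in[0,1)$, so that $\langle z,\xi\rangle = r\xi_1$. When $n\geq 2$, the slice-integration formula (Proposition 1.4.7 of \cite{Rud}) rewrites the sphere integral as a dimensional constant times
$$\int_{\D} \frac{(1-|w|^2)^{n-2}}{|1-rw|^{n+s}}\,dA(w),$$
where $dA$ is area measure on the unit disk. Polar coordinates $w=\rho e^{i\theta}$ combined with the classical one-variable estimate
$$\int_0^{2\pi}\frac{d\theta}{|1-ae^{i\theta}|^{1+\gamma}} \asymp (1-a)^{-\gamma}\qquad (\gamma>0,\,0\leq a<1),$$
applied with $\gamma=n+s-1>0$, reduces everything to the radial integral
$$\int_0^1 \rho(1-\rho^2)^{n-2}(1-r\rho)^{-(n+s-1)}\,d\rho,$$
which is shown to be $\lesssim (1-r^2)^{-s}$ by the standard beta-type bound $\int_0^1(1-\rho)^{a}(1-r\rho)^{-c}\,d\rho \asymp (1-r)^{a+1-c}$ valid when $a>-1$ and $c-a-1>0$ (here $a=n-2$, $c=n+s-1$, $a+1-c=-s$). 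The case $n=1$ is immediate since the sphere integral is already a circle integral and the same one-variable estimate applies directly.

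For the ball estimate, I would pass to polar coordinates $u=\rho\xi$ with $\rho\in(0,1)$, $\xi\in\Sn$, obtaining
$$\int_{\Bn}\frac{(1-|u|^2)^t\,dV(u)}{|1-\langle z,u\rangle|^{n+1+t+s}} = 2n\int_0^1 \rho^{2n-1}(1-\rho^2)^t \int_{\Sn}\frac{d\sigma(\xi)}{|1-\langle \rho z,\xi\rangle|^{n+1+t+s}}\,d\rho.$$
Applying the sphere estimate just proved, with the role of $s$ played by $1+t+s>0$ (legitimate since $t>-1$ and $s>0$), the inner integral is $\lesssim (1-\rho^2|z|^2)^{-(1+t+s)}$, leaving
$$\int_0^1 \rho^{2n-1}(1-\rho^2)^t (1-\rho^2|z|^2)^{-(1+t+s)}\,d\rho \lesssim (1-|z|^2)^{-s},$$
which follows from the same beta-type lemma (after a cosmetic change of variable $\rho\mapsto\rho^2$), using $t>-1$.

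The main obstacle in this route is the one-dimensional kernel lemma $\int_0^{2\pi}|1-ae^{i\theta}|^{-(1+\gamma)}\,d\theta \asymp (1-a)^{-\gamma}$ together with the sharp beta-type radial estimate; once those are in hand, everything else is Fubini and bookkeeping. A more structural alternative for the sphere estimate is to expand $(1-\langle z,\xi\rangle)^{-c}$ with $c=(n+s)/2$ as a binomial series and invoke the orthogonality of the monomials $\xi^\alpha$ under $d\sigma$: the sphere integral collapses to
$$\sum_{k=0}^{\infty}\left(\frac{\Gamma(c+k)}{\Gamma(c)\,k!}\right)^{\!2}\frac{(n-1)!\,k!}{(n-1+k)!}\,|z|^{2k},$$
whose $k$th coefficient, by Stirling, behaves like $k^{s-1}$, so the sum is comparable to $(1-|z|^2)^{-s}$ whenever $s>0$. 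This second approach avoids slice integration but trades it for careful asymptotic analysis of the Gamma quotients.
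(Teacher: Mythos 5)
Your argument is correct, but note that the paper supplies no proof of this lemma at all: it is stated as a known result with a pointer to Theorem 1.12 of \cite{ZhuBn}, so the only meaningful comparison is with that reference. Your primary route (unitary invariance, the slice-integration formula reducing the sphere integral to a weighted disk integral, the one-variable circle estimate $\int_0^{2\pi}|1-ae^{i\theta}|^{-(1+\gamma)}\,d\theta\asymp(1-a)^{-\gamma}$ for $\gamma>0$, and the beta-type radial bound) is sound: all hypotheses check out ($\gamma=n+s-1>0$; in the radial integral $a=n-2>-1$ for $n\ge2$ and $c-a-1=s>0$; and in the ball estimate $a=t>-1$, $c-a-1=s>0$), and the deduction of the ball estimate from the sphere estimate via polar coordinates and the identity $\rho\langle z,\xi\rangle=\langle\rho z,\xi\rangle$ is exactly the standard reduction. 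Your alternative sketch at the end --- expanding $(1-\langle z,\xi\rangle)^{-(n+s)/2}$ as a binomial series, using orthogonality of monomials on $\Sn$ to collapse the integral to $\sum_k\bigl(\tfrac{\Gamma(c+k)}{\Gamma(c)k!}\bigr)^2\tfrac{(n-1)!\,k!}{(n-1+k)!}|z|^{2k}$, and reading off the growth $k^{s-1}$ of the coefficients by Stirling --- is essentially the proof given in Zhu's book, so that variant matches the cited source. The slice-integration route buys you independence from the Gamma-function asymptotics at the cost of needing the sharp one-dimensional kernel lemma and the beta-type estimate; the series route is cleaner conceptually but requires the careful coefficient asymptotics you flag. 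Either is acceptable; there is no gap.
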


We also need the following discrete version of the previous estimate.

\begin{otherl}\label{l2}
Let $\{a_k\}$ be a separated sequence in $\Bn$ and let $n<t<s$.
Then
$$
\sum_{k}\frac{(1-|a_k|^2)^t}{|1-\langle z,a_k \rangle |^s}\lesssim
(1-|z|^2)^{t-s},\qquad z\in \Bn.
$$
\end{otherl}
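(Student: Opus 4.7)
The plan is to reduce the discrete sum to the continuous Forelli–Rudin estimate already available in Lemma \ref{IctBn}, by exploiting that a separated sequence comes with pairwise disjoint Bergman metric balls on which the relevant quantities are essentially constant.

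First, since $\{a_k\}$ is separated, there exists $\delta>0$ with $\beta(a_i,a_j)\geq\delta$ for $i\neq j$, so the balls $D_k:=D(a_k,\delta/2)$ are pairwise disjoint. On each $D_k$ I would use the two standard Bergman-invariance estimates: for $w\in D_k$ one has
$$1-|w|^2 \asymp 1-|a_k|^2, \qquad |1-\langle z,w\rangle| \asymp |1-\langle z,a_k\rangle|,$$
and the volume normalization $|D_k|\asymp (1-|a_k|^2)^{n+1}$ (these comparisons are the content of, e.g., Lemma 2.20/2.24 in \cite{ZhuBn}, and the implicit constants depend only on $n$ and $\delta$). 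Consequently, for each fixed $z\in\Bn$,
$$\frac{(1-|a_k|^2)^t}{|1-\langle z,a_k\rangle|^s} \;\asymp\; \frac{1}{|D_k|}\int_{D_k}\frac{(1-|w|^2)^{t-n-1}}{|1-\langle z,w\rangle|^s}\,dV(w).$$

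Next I would sum over $k$ and use the pairwise disjointness of the $D_k$ to pull the sum inside the integral, bounding it by the integral over the whole ball:
$$\sum_k \frac{(1-|a_k|^2)^t}{|1-\langle z,a_k\rangle|^s} \;\lesssim\; \sum_k \int_{D_k}\frac{(1-|w|^2)^{t-n-1}}{|1-\langle z,w\rangle|^s}\,dV(w) \;\leq\; \int_{\Bn}\frac{(1-|w|^2)^{t-n-1}}{|1-\langle z,w\rangle|^s}\,dV(w).$$

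Finally, I would apply the continuous Forelli–Rudin estimate from Lemma \ref{IctBn}, with the parameter choice $t'=t-n-1$ (which satisfies $t'>-1$ precisely because $t>n$) and $s'=s-t$ (which is positive because $s>t$); then $n+1+t'+s'=s$, so the lemma yields
$$\int_{\Bn}\frac{(1-|w|^2)^{t-n-1}}{|1-\langle z,w\rangle|^s}\,dV(w) \;\lesssim\; (1-|z|^2)^{-s'} = (1-|z|^2)^{t-s},$$
and the proof is complete.

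The main (and really only) point to watch is the bookkeeping of the parameters: the hypothesis $t>n$ is used exactly to make the weight exponent $t-n-1$ admissible in the Forelli–Rudin integral, and the hypothesis $s>t$ is used exactly to make the denominator exponent exceed $n+1+(t-n-1)=t$. Everything else is a routine application of the Bergman-metric comparability on balls of a fixed hyperbolic radius, which is why the estimate is well-known as a ``discretization'' of Lemma \ref{IctBn}.
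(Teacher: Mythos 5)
The paper does not actually prove Lemma \ref{l2}; it is stated as a known discrete companion to Lemma \ref{IctBn}, so there is no ``official'' argument to compare against. Your discretization proof is the standard one and it is correct in substance: separation gives disjoint Bergman balls $D_k=D(a_k,\delta/2)$, the three comparability facts ($1-|w|^2\asymp 1-|a_k|^2$, $|1-\langle z,w\rangle|\asymp|1-\langle z,a_k\rangle|$ uniformly in $z\in\overline{\mathbb{B}}_n$, and $V(D_k)\asymp(1-|a_k|^2)^{n+1}$) convert each summand into an integral over $D_k$, disjointness majorizes the sum by an integral over $\Bn$, and Forelli--Rudin with exponents $t-n-1>-1$ and $s-t>0$ finishes; the hypotheses $n<t<s$ are used exactly where you say they are.

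One small slip: your first display is internally inconsistent with the second. With the numerator $(1-|w|^2)^{t-n-1}$, the \emph{unaveraged} integral satisfies
$$\int_{D_k}\frac{(1-|w|^2)^{t-n-1}}{|1-\langle z,w\rangle|^{s}}\,dV(w)\asymp\frac{(1-|a_k|^2)^{t}}{|1-\langle z,a_k\rangle|^{s}},$$
so the prefactor $\frac{1}{|D_k|}$ in your first display is off by a factor $(1-|a_k|^2)^{n+1}$ (as written it would equate $(1-|a_k|^2)^t$ with $(1-|a_k|^2)^{t-n-1}$). Either drop the $\frac{1}{|D_k|}$, or keep it and use the numerator $(1-|w|^2)^{t}$ and adjust the Forelli--Rudin parameters accordingly. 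Your second display, which is the one actually summed, is the correct statement, so the proof goes through once this is cleaned up.
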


The following more general version of Lemma \ref{IctBn} will be used. The proof can be found in \cite{OF}.

\begin{otherl}\label{FRgeneral}
Let $s>-1$, $s+n+1>r,t>0$, and $r+t-s>n+1$. For $a \in \Bn$ and $z\in \overline{\mathbb{B}}_ n$, one has
$$\int_{\Bn} \frac{(1-|u|^2)^s dV(u)}{|1-\langle z,u\rangle|^r |1-\langle a,u\rangle|^t} \lesssim \frac{1}{|1-\langle z,a\rangle|^{r+t-s-n-1}}.$$
\end{otherl}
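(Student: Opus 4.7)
My approach is to reduce this two-kernel estimate to the one-kernel Forelli--Rudin estimates of Lemma \ref{IctBn}. The essential tool is the Kor\'anyi quasi-triangle inequality: since $|1-\langle\cdot,\cdot\rangle|^{1/2}$ is a pseudo-distance on $\overline{\Bn}$, for every $\beta>0$ there exists a constant $C_\beta>0$ with
$$
|1-\langle z,a\rangle|^\beta \leq C_\beta\bigl(|1-\langle z,u\rangle|^\beta + |1-\langle u,a\rangle|^\beta\bigr),\qquad u\in\overline{\Bn}.
$$

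Set $\alpha:=r+t-s-n-1$, which is strictly positive by hypothesis. Multiplying the integrand by $|1-\langle z,a\rangle|^\alpha$ and applying the inequality above with $\beta=\alpha$ reduces the claim to the uniform boundedness in $z,a$ of
$$
J_1=\int_{\Bn}\frac{(1-|u|^2)^s\,dV(u)}{|1-\langle z,u\rangle|^{r-\alpha}|1-\langle u,a\rangle|^t},\qquad J_2=\int_{\Bn}\frac{(1-|u|^2)^s\,dV(u)}{|1-\langle z,u\rangle|^r|1-\langle u,a\rangle|^{t-\alpha}}.
$$
The exponent $t-\alpha=s+n+1-r$ is strictly positive by the hypothesis $s+n+1>r$, while $r-\alpha=s+n+1-t$ may have either sign. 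I would further decompose $\Bn$ into the two subregions $\{|1-\langle z,u\rangle|\geq|1-\langle u,a\rangle|\}$ and its complement, dominating the smaller of the two kernel factors by the larger on each subregion. This collapses $J_1$ and $J_2$ into one-kernel integrals whose combined exponent equals $r+(t-\alpha)=(r-\alpha)+t=s+n+1$, putting them in the scope of Lemma \ref{IctBn}. In the subcase $r-\alpha\leq 0$, the factor $|1-\langle z,u\rangle|^{-(r-\alpha)}$ is uniformly bounded on $\Bn$ and $J_1$ reduces directly to a one-kernel integral in the $a$-variable.

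The main technical obstacle is that this collapsed single-kernel exponent is exactly $s+n+1$, the critical value of Lemma \ref{IctBn}, at which the integral produces a spurious logarithmic factor rather than a constant. Overcoming this requires replacing the crude single splitting with a more delicate argument. A clean path is the M\"obius change of variables $u=\varphi_a(v)$, which via the identities
$$
|1-\langle a,\varphi_a(v)\rangle|=\frac{1-|a|^2}{|1-\langle v,a\rangle|},\qquad |1-\langle z,\varphi_a(v)\rangle|=\frac{|1-\langle v,\varphi_a(z)\rangle|\,|1-\langle z,a\rangle|}{|1-\langle v,a\rangle|},
$$
together with the Jacobian $(1-|a|^2)^{n+1}|1-\langle v,a\rangle|^{-2n-2}$, explicitly factors out $(1-|a|^2)^{s+n+1-t}|1-\langle z,a\rangle|^{-r}$ and leaves a residual integral in $v$ of the same form but with $z$ and $a$ replaced by $a$ and $\varphi_a(z)$, respectively; when $\alpha\geq s+n+1$ the singular factor involving $a$ becomes harmless and Lemma \ref{IctBn} applies, while in the opposite regime the exponents are balanced using $(1-|a|^2)\lesssim|1-\langle z,a\rangle|$ and the identity $|1-\langle a,\varphi_a(z)\rangle|=(1-|a|^2)/|1-\langle z,a\rangle|$. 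The alternative real-variable strategy, used in \cite{OF}, is a Whitney-type Kor\'anyi shell decomposition of $\Bn$ centered at $a$: on each shell both kernels are controlled by explicit powers of the shell radius, and summing the resulting geometric series yields the desired bound of order $|1-\langle z,a\rangle|^{-\alpha}$ without encountering the critical threshold of Lemma \ref{IctBn}.
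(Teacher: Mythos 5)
First, a point of comparison: the paper does not prove this lemma at all --- it is quoted from \cite{OF} --- so you are supplying an argument where the authors supply a citation. Your diagnosis of the first obstacle is accurate: multiplying by $|1-\langle z,a\rangle|^{\alpha}$ and splitting by the quasi-triangle inequality collapses $J_1$ and $J_2$ onto the critical exponent $s+n+1$, where Lemma \ref{IctBn} only yields a logarithm. (Incidentally, the hypotheses must be read as $0<r<s+n+1$ \emph{and} $0<t<s+n+1$; if, say, $t\geq s+n+1$, the statement is false --- take $z$ antipodal to $a$ and let $|a|\to 1$ --- so $r-\alpha=s+n+1-t$ is in fact always positive, and the subcase $r-\alpha\leq 0$ you discuss would anyway reduce $J_1$ to a divergent one-kernel integral.) The genuine gap is that neither proposed rescue closes the argument. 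The M\"obius substitution does produce the prefactor $(1-|a|^2)^{s+n+1-t}|1-\langle z,a\rangle|^{-r}$ and a residual integral with exponents $r$ and $t'=s+n+1-\alpha$ at the pair $(\varphi_a(z),a)$; when $\alpha\geq s+n+1$ this finishes. But in the main regime $0<\alpha<s+n+1$ the residual is an instance of the \emph{same} lemma (with gap $r+t'-s-n-1=r-\alpha>0$ and base distance $|1-\langle \varphi_a(z),a\rangle|=(1-|a|^2)/|1-\langle z,a\rangle|$, which can be arbitrarily small), and the bound you need from it is exactly the lemma's conclusion for the new data. The inequality $(1-|a|^2)\lesssim|1-\langle z,a\rangle|$ only says the new distance is bounded above; it does not make the residual uniformly bounded, and iterating the substitution merely swaps $\alpha$ and $r-\alpha$. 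This route is circular.

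The shell decomposition is the right strategy, but the claim that ``on each shell both kernels are controlled by explicit powers of the shell radius'' fails precisely on the decisive shells. Write $d=|1-\langle z,a\rangle|$ and $A_j=\{u:|1-\langle a,u\rangle|\asymp 2^{-j}\}$. For $2^{-j}\geq Cd$ both kernels are $\asymp 2^{-j}$ and the contributions sum to $\sum 2^{j\alpha}\asymp d^{-\alpha}$ (this uses $\alpha>0$); for $2^{-j}\leq d/C$ one has $|1-\langle z,u\rangle|\asymp d$ and the contributions sum to $d^{-r}\sum 2^{-j(s+n+1-t)}\asymp d^{-\alpha}$ (this uses $t<s+n+1$). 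On the boundedly many transition shells with $2^{-j}\asymp d$, however, $|1-\langle z,u\rangle|$ ranges over all of $(0,Cd]$ and cannot be frozen; one must integrate it, using $A_j\subset\{|1-\langle z,u\rangle|\lesssim d\}$ together with $\int_{|1-\langle z,u\rangle|<\delta}(1-|u|^2)^s|1-\langle z,u\rangle|^{-r}\,dV(u)\lesssim \delta^{\,s+n+1-r}$, which is exactly where the hypothesis $r<s+n+1$ enters and which gives $d^{-t}\cdot d^{\,s+n+1-r}=d^{-\alpha}$. Note there is no shortcut here: freezing the large kernel at $d$ and integrating the other over all of $\Bn$ yields only $d^{-r}$ or $d^{-t}$, both of which exceed $d^{-\alpha}$ for small $d$ since $r,t>\alpha$. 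With the transition regime supplied, your third route becomes a complete and self-contained proof; as written, it is missing.
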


Finally, we will use an integral estimate connecting the Kor\'anyi regions and the Bergman type kernels. Its original form is due to Luecking \cite{Lue1}, and the present version can be found in \cite{Ars} and \cite{Jev}. Note that the converse inequality also holds, and is quite trivial to prove.

\begin{otherl}\label{Gamma}
Let $0<s<\infty$ and $\theta>n\max\{1,1/s\}$. If $\mu$ is a positive measure, then
$$\int_{\Sn}\left[\int_{\Bn}\left(\frac{1-|z|^2}{|1-\langle z,\xi\rangle|}\right)^\theta d\mu(z)\right]^s d\sigma(\xi)\lesssim \int_{\Sn}\mu(\Gamma(\xi))^s d\sigma(\xi).$$
\end{otherl}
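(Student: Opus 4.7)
The plan is to reduce the two-variable integral on the LHS to tent-type quantities by a dyadic decomposition around $\xi$, and then absorb the resulting aperture blow-up into a geometric factor summable under the hypothesis $\theta>n\max\{1,1/s\}$. Write $\Phi(\xi)=\int_{\Bn}\bigl((1-|z|^2)/|1-\langle z,\xi\rangle|\bigr)^\theta\,d\mu(z)$. For fixed $\xi\in\Sn$ I would partition $\Bn$ into the non-isotropic dyadic shells
$$E_j(\xi)=\bigl\{z\in\Bn\colon 2^j(1-|z|^2)\le|1-\langle z,\xi\rangle|<2^{j+1}(1-|z|^2)\bigr\},\qquad j\ge 0,$$
on each of which the integrand is of size $\asymp 2^{-j\theta}$ and which is contained in the Kor\'anyi region $\Gamma_{2^{j+2}}(\xi)$. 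This produces the pointwise estimate
$$\Phi(\xi)\ \lesssim\ \sum_{j\ge 0}2^{-j\theta}\,\mu\bigl(\Gamma_{2^{j+2}}(\xi)\bigr).$$

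The main technical step is to establish, for every $\gamma\ge 1$, the aperture-change estimate
$$\int_{\Sn}\mu\bigl(\Gamma_\gamma(\xi)\bigr)^s\,d\sigma(\xi)\ \lesssim\ \gamma^{\,n\max\{1,s\}}\int_{\Sn}\mu\bigl(\Gamma(\xi)\bigr)^s\,d\sigma(\xi).$$
When $s=1$ this is immediate from Fubini and $\sigma(I_\gamma(z))\asymp\gamma^n(1-|z|^2)^n$. For $s>1$ I would argue by duality: writing the $L^s$ norm of $\xi\mapsto\mu(\Gamma_\gamma(\xi))$ as a pairing against a unit $g\in L^{s'}(\Sn)$, Fubini and the bound $\int_{I_\gamma(z)}g\,d\sigma\le\sigma(I_\gamma(z))\,M[g](\zeta_z)\asymp\gamma^n(1-|z|^2)^n M[g](\zeta_z)$ reduce matters to $\gamma^n\int_{\Bn}(1-|z|^2)^n M[g](\zeta_z)\,d\mu(z)$, which by \eqref{EqG} and the $L^{s'}(\Sn)$-boundedness \eqref{hlm} of $M$ is controlled by $\|g\|_{s'}\cdot\|\mu(\Gamma(\cdot))\|_{L^s(\Sn)}$. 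For $0<s<1$ neither duality nor Jensen is available, so I would discretize $\mu$ along an $r$-lattice $\{a_k\}$: then both $\mu(\Gamma(\cdot))$ and $\mu(\Gamma_\gamma(\cdot))$ become finite sums of indicators $\chi_{I_\gamma(a_k)}$, the subadditivity $(\sum c_k)^s\le\sum c_k^s$ applies termwise, and $\sigma(I_\gamma(a_k))\asymp\gamma^n(1-|a_k|^2)^n$ delivers the desired $\gamma^n$ factor; the general $\mu$ is recovered by approximation.

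With this in hand, the assembly is short. For $s\ge 1$ (so $\theta>n$), Minkowski's inequality in $L^s(\Sn)$ applied to the dyadic bound above yields
$$\|\Phi\|_{L^s(\Sn)}\ \lesssim\ \sum_{j\ge 0}2^{-j\theta}\cdot 2^{jn}\,\|\mu(\Gamma(\cdot))\|_{L^s(\Sn)},$$
and the geometric series converges precisely because $\theta>n$. For $0<s<1$ (so $\theta s>n$), the elementary $s$-subadditivity $(a+b)^s\le a^s+b^s$ gives $\Phi(\xi)^s\lesssim\sum_j 2^{-j\theta s}\mu(\Gamma_{2^{j+2}}(\xi))^s$, so that integrating and invoking the aperture-change estimate yields
$$\int_{\Sn}\Phi(\xi)^s\,d\sigma(\xi)\ \lesssim\ \sum_{j\ge 0}2^{-j\theta s}\cdot 2^{jn}\int_{\Sn}\mu(\Gamma(\eta))^s\,d\sigma(\eta),$$
converging because $\theta s>n$. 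The only delicate point is the aperture-change lemma in the range $0<s<1$: the sharp $\gamma^n$ growth (rather than any worse power) is precisely what matches the hypothesis $\theta>n/s$, and obtaining it without duality or linearity of $L^s$ is what forces the atomic/lattice reduction described above.
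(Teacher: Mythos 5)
The paper does not actually prove this lemma: it is quoted from Luecking \cite{Lue1} in the form given in \cite{Ars, Jev}, so there is no internal proof to compare against. Your architecture — dyadic shells $E_j(\xi)$ in the ratio $|1-\langle z,\xi\rangle|/(1-|z|^2)$, reduction to a change-of-aperture inequality for $\xi\mapsto\mu(\Gamma_\gamma(\xi))$, then geometric summation under $\theta>n\max\{1,1/s\}$ — is sound, and your treatment of the aperture change for $s=1$ (Fubini) and $s>1$ (duality against $g\in L^{s'}(\Sn)$ plus the maximal function) is correct up to routine details (e.g.\ starting the shells at $j=-1$, and noting that the average of $g$ over $I_\gamma(z)$ is dominated by $M[g](\xi)$ for $z\in\Gamma(\xi)$ before applying \eqref{EqG}). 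The inequality $\int_{\Sn}\mu(\Gamma_\gamma(\xi))^s\,d\sigma\lesssim\gamma^{n}\int_{\Sn}\mu(\Gamma(\xi))^s\,d\sigma$ that you need for $0<s<1$ is indeed true, and the exponent $n$ is exactly what the hypothesis $\theta s>n$ requires, so the plan is salvageable.

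However, your proof of that inequality in the range $0<s<1$ has a genuine gap. After discretizing $\mu=\sum_k c_k\delta_{a_k}$ on a lattice, termwise $s$-subadditivity gives
$\int_{\Sn}\mu(\Gamma_\gamma(\xi))^s\,d\sigma\le\sum_k c_k^s\,\sigma(I_\gamma(a_k))\asymp\gamma^n\sum_k c_k^s(1-|a_k|^2)^n$,
but the quantity $\sum_k c_k^s(1-|a_k|^2)^n\asymp\int_{\Sn}\bigl(\sum_{a_k\in\Gamma(\xi)}c_k^s\bigr)\,d\sigma(\xi)$ is \emph{not} controlled by $\int_{\Sn}\bigl(\sum_{a_k\in\Gamma(\xi)}c_k\bigr)^s d\sigma(\xi)=\int_{\Sn}\mu(\Gamma(\xi))^s\,d\sigma(\xi)$: for $s<1$ the pointwise inequality $\sum c_k^s\ge(\sum c_k)^s$ goes the wrong way, and the discrepancy is unbounded because the sets $I(a_k)$ overlap with multiplicity $\asymp\log\frac{1}{1-\sup_k|a_k|}$ (all lattice points along one radius see the same boundary point). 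Concretely, with $c_k\equiv 1$ on a full $r$-lattice truncated at $1-|a_k|\ge t$ one gets $\sum_k(1-|a_k|^2)^n\asymp\log(1/t)$ while $\int_{\Sn}\mu(\Gamma(\xi))^s\,d\sigma\asymp(\log(1/t))^{s}$, so your chain bounds $\int\mu(\Gamma_\gamma)^s$ by $\gamma^n$ times something larger than the target by a factor $(\log(1/t))^{1-s}$. To close the case $0<s<1$ you need a different device for the aperture change: for instance the Coifman--Meyer--Stein good-$\lambda$ argument (compare $\{\mu(\Gamma_\gamma(\cdot))>K\lambda\}$ with $O^*=\{M[\chi_O]>c\gamma^{-n}\}$ where $O=\{\mu(\Gamma(\cdot))>\lambda\}$, using that $\sigma(I(z)\setminus O)\gtrsim(1-|z|^2)^n$ whenever $z\in\Gamma_\gamma(\eta)$ for some $\eta\notin O^*$), or the atomic decomposition of the tent space $T^s_1$, or Luecking's decomposition of $\mu$ over the tents of the level sets $\{\mu(\Gamma(\cdot))>2^k\}$.
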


\section{Lemmas on discretization}

We will on several occasions use Kahane's and Khinchine's inequalities throughout the proof of our main theorem. These tools provide discretized versions of the conditions we really need. In this section, we show how to obtain the continuous characterizations from the discrete ones. For Bergman space analogues of these lemmas, see \cite{Lu85}.

\begin{lemma}\label{disc_cont}
Let $0<p,q<\infty$ and $\beta>-n-1$. There exists $r_0 \in (0,1)$ so that, if $0<r<r_0$ and $Z=\{a_k\}$ is an $r$-lattice, then
\begin{equation*}
\int_{\Sn}\left(\int_{\Gamma(\xi)}|f(z)|^q (1-|z|^2)^\beta dV(z)\right)^{\frac{p}{q}}d\sigma(\xi) \lesssim \int_{\Sn}\left(\sum_{a_k \in \Gamma(\xi)} |f(a_k)|^q (1-|a_k|^2)^{n+1+\beta}\right)^{\frac{p}{q}}d\sigma(\xi),
\end{equation*}
whenever $f$ is holomorphic and in $T^p_{q,\beta}$.
\end{lemma}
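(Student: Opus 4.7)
My plan is to adapt the standard Bergman-space discretization argument (cf.\ \cite{Lu85}) using a covering-and-absorption strategy. First I would cover the Kor\'anyi region $\Gamma(\xi)$ by the Bergman balls $D(a_k,r)$ of the $r$-lattice. Using $(1-|z|^2)\asymp (1-|a_k|^2)$ and $V(D(a_k,r))\asymp (1-|a_k|^2)^{n+1}$ on each such ball, together with the observation that any ball $D(a_k,r)$ meeting $\Gamma(\xi)$ has its center in some fixed enlargement $\widetilde{\Gamma}(\xi)$ (by \eqref{setin}), the continuous integral over $\Gamma(\xi)$ is dominated by a sum of localized integrals over $D(a_k,r)$ with $a_k\in\widetilde{\Gamma}(\xi)$.

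The crux is a pointwise estimate of the form
$$|f(z)|^q \,\lesssim\, |f(a_k)|^q \,+\, \varepsilon(r)\cdot \frac{1}{V(D(a_k,2r))}\int_{D(a_k,2r)}|f(w)|^q\,dV(w),\qquad z\in D(a_k,r),$$
valid for any holomorphic $f$, with $\varepsilon(r)\to 0$ as $r\to 0$. This would follow by combining the sub-mean-value property of $|f|^q$ with a Schwarz--Pick/Cauchy-type oscillation estimate for $f$ on the small Bergman disk, used to split the contribution at $a_k$ from a tangential error. Integrating against $(1-|z|^2)^\beta\,dV$ over $D(a_k,r)$, summing over the relevant $a_k\in\widetilde{\Gamma}(\xi)$, and invoking the bounded overlap of $\{D(a_k,2r)\}$, I obtain
\begin{equation*}
\int_{\Gamma(\xi)}\!|f|^q(1-|z|^2)^\beta\,dV \lesssim \sum_{a_k\in\widetilde{\Gamma}(\xi)}\!|f(a_k)|^q(1-|a_k|^2)^{n+1+\beta} + \varepsilon(r)\!\int_{\widetilde{\widetilde{\Gamma}}(\xi)}\!|f(w)|^q(1-|w|^2)^\beta\,dV(w).
\end{equation*}

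Next I would raise both sides to the $p/q$-th power (using the quasi-triangle $(A+B)^{p/q}\lesssim_{p/q} A^{p/q}+B^{p/q}$ valid for all $p,q>0$), integrate over $\xi\in\Sn$, and invoke the standard aperture invariance of the continuous tent space $T^p_{q,\beta}$ to convert the $\widetilde{\widetilde{\Gamma}}$-integral on the right back into $\|f\|_{T^p_{q,\beta}}^p$. This produces
$$\|f\|_{T^p_{q,\beta}}^p \,\le\, C_1\,\big\|\{f(a_k)(1-|a_k|^2)^{(n+1+\beta)/q}\}\big\|_{T^p_q(Z),\widetilde{\Gamma}}^p \,+\, C_2\,\varepsilon(r)^{p/q}\,\|f\|_{T^p_{q,\beta}}^p,$$
where the subscript $\widetilde{\Gamma}$ indicates that the discrete tent-norm on the right is momentarily computed with the enlarged cone. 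Choosing $r_0\in(0,1)$ so small that $C_2\,\varepsilon(r)^{p/q}<\tfrac12$ for every $r<r_0$, and using the hypothesis $f\in T^p_{q,\beta}$ to ensure that $\|f\|_{T^p_{q,\beta}}$ is finite, I absorb the last term into the left-hand side. Finally the aperture invariance of the discrete tent space $T^p_q(Z)$ lets me shrink the cone in the sum from $\widetilde{\Gamma}$ back to $\Gamma$, yielding the statement.

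The main obstacle will be establishing the key pointwise estimate with $\varepsilon(r)\to 0$: the crude bound $|f(z)|^q\lesssim V(D(a_k,2r))^{-1}\!\int_{D(a_k,2r)}|f|^q\,dV$ is immediate from subharmonicity, but isolating the $|f(a_k)|^q$ contribution with a \emph{vanishing} error coefficient requires a delicate Cauchy/Schwarz--Pick type oscillation estimate for holomorphic $f$ on small Bergman balls, adapted to the Bergman metric on $\Bn$ and to the weight $(1-|z|^2)^\beta$. Everything else in the plan is a routine application of sub-mean-value, bounded overlap, the quasi-triangle inequality, and the aperture invariance of tent spaces.
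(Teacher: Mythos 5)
Your proposal is correct and follows essentially the same route as the paper: cover $\Gamma(\xi)$ by the lattice balls, split $|f(z)|^q$ into $|f(a_k)|^q$ plus an oscillation error with coefficient $\varepsilon(r)=O(r^q)$, raise to the power $p/q$, integrate over $\Sn$, use aperture invariance, and absorb the error term using the assumed finiteness of $\|f\|_{T^p_{q,\beta}}$. The one point to adjust is the "main obstacle" you flag: the oscillation estimate cannot localize the error to $D(a_k,2r)$, since a Cauchy estimate on a Bergman ball of radius $r$ costs a factor $1/r$ that cancels the gain; the correct form (which the paper obtains from the reproducing formula for $f(z)-f(a_k)$, as in Luecking or Koo--Wang) places the error integral over a fixed-radius ball $D(a_k,1)$, or equivalently a Bergman-kernel-weighted integral, while still retaining the factor $r^q$. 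This is harmless for the rest of your argument, because separated sequences still give bounded overlap of the balls $D(a_k,1)$ and you are already enlarging the aperture to $\widetilde{\widetilde{\Gamma}}(\xi)$.
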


\begin{proof}
It is clear that
$$\int_{\Gamma(\xi)}|f(z)|^q (1-|z|^2)^\beta dV(z) \lesssim \sum_{a_k \in \widetilde{\Gamma}(\xi)}\int_{D(a_k,r)}|f(z)|^q (1-|z|^2)^\beta dV(z),$$
where $\widetilde{\Gamma}(\xi)$ is an approach region with a larger aperture.
By using the standard reproducing formula for $f(z)-f(0)$ followed by a change of variables with the corresponding ball automorphism (compare with the proof of Theorem 2.3 in \cite{Lu85}  {or see Lemma 2.2 in \cite{KW})}, we obtain for {$z,u$ with $\beta(z,u)<r<1/2$} that
$$|f(z)-f(u)|^q \lesssim r^q \int_{D(u,1)} |f(w)|^p \frac{(1-|u|^2)^{n+1}}{|1-\langle w,u\rangle|^{2n+2}}dV(w),$$
where the constant implied by "$\lesssim$" does not depend on $r$.
Thus, an application of Fubini's theorem gives us
\begin{align*}
\sum_{a_k \in \widetilde{\Gamma}(\xi)}\int_{D(a_k,r)}&|f(z)-f(a_k)|^q (1-|z|^2)^\beta dV(z)\\
\lesssim &\sum_{a_k \in \widetilde{\Gamma}(\xi)} \int_{D(a_k,r)} r^q \int_{D(a_k,1)} |f(w)|^q \frac{(1-|a_k|^2)^{n+1}}{|1-\langle w,a_k\rangle|^{2n+2}}dV(w)(1-|z|^2)^\beta dV(z)\\
\lesssim & r^q\sum_{a_k \in \widetilde{\Gamma}(\xi)} \int_{D(a_k,1)} |f(w)|^q (1-|w|^2)^\beta dV(w)\\
\lesssim & r^q \int_{\widetilde{\widetilde{\Gamma}}(\xi)} |f(w)|^q (1-|w|^2)^\beta dV(w).
\end{align*}
The last inequality is due to (\ref{setin}), and
$\widetilde{\widetilde{\Gamma}}(\xi)$ is an approach region with aperture larger than $\widetilde{\Gamma}(\xi)$. {Therefore,}
\begin{align*}
\sum_{a_k \in \widetilde{\Gamma}(\xi)}&\int_{D(a_k,r)}|f(z)|^q (1-|z|^2)^\beta dV(z)\\
\lesssim & \sum_{a_k \in \widetilde{\Gamma}(\xi)}\int_{D(a_k,r)}|f(z)-f(a_k)|^q (1-|z|^2)^\beta dV(z)+\sum_{a_k \in \widetilde{\Gamma}(\xi)}|f(a_k)|^q (1-|a_k|^2)^{n+1+\beta}\\
\lesssim & r^q \int_{\widetilde{\widetilde{\Gamma}}(\xi)} |f(w)|^q (1-|w|^2)^\beta dV(w)+\sum_{a_k \in \widetilde{\Gamma}(\xi)}|f(a_k)|^q (1-|a_k|^2)^{n+1+\beta}.
\end{align*}
Recall that different apertures define the same tent spaces with equivalent quasinorms. Raising to power $p/q$ and integrating over $\Sn$ yields
\begin{align*}
\int_{\Sn}&\left(\int_{\Gamma(\xi)}|f(z)|^q (1-|z|^2)^\beta dV(z)\right)^{p/q} \!\!\!d\sigma(\xi)\\
&\lesssim \int_{\Sn}\left(\sum_{a_k \in \Gamma(\xi)} |f(a_k)|^q (1-|a_k|^2)^{n+1+\beta}\right)^{p/q} \!\!\!d\sigma(\xi)\\
&+r^{p}\int_{\Sn}\left(\int_{\Gamma(\xi)}|f(z)|^q (1-|z|^2)^\beta dV(z)\right)^{p/q}d\sigma(\xi).
\end{align*}
Since the constants in "$\lesssim$" do not depend on $r$, we will find the desired $r_0$, which completes the proof.
\end{proof}

By a similar argument, we obtain the following lemma.

\begin{lemma}\label{discsup}
Let $0<p<\infty$ and $\beta\geq 0$. There exists $r_0 \in (0,1)$ so that, if $0<r<r_0$ and $Z=\{a_k\}$ is an $r$-lattice, then
\begin{equation*}
\int_{\Sn}\sup_{z\in \Gamma(\xi)}|f(z)|^p (1-|z|^2)^\beta d\sigma(\xi) \lesssim \int_{\Sn}\sup_{a_k \in \Gamma(\xi)} |f(a_k)|^p (1-|a_k|^2)^{\beta}d\sigma(\xi),
\end{equation*}
whenever $f$ is holomorphic and the left-hand-side is finite.
\end{lemma}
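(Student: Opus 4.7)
The plan is to follow the strategy of Lemma \ref{disc_cont} almost verbatim, replacing the $L^q$ integral over $\Gamma(\xi)$ by a supremum. First, since $\Bn=\bigcup_k D(a_k,r)$ and by the aperture-enlargement rule (\ref{setin}), every $z \in \Gamma(\xi)$ lies in some $D(a_k,r)$ with $a_k \in \widetilde{\Gamma}(\xi)$. Using that $(1-|z|^2)\asymp (1-|a_k|^2)$ uniformly on each $D(a_k,r)$, this gives
$$\sup_{z\in\Gamma(\xi)}|f(z)|^p(1-|z|^2)^\beta \lesssim \sup_{a_k\in\widetilde{\Gamma}(\xi)}\sup_{z\in D(a_k,r)}|f(z)|^p (1-|a_k|^2)^\beta.$$

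Second, I would use the quasi-subadditivity $|f(z)|^p\lesssim |f(z)-f(a_k)|^p+|f(a_k)|^p$ (with constant $2^{p-1}$ if $p\geq 1$, equal to $1$ if $0<p<1$) together with the same reproducing-formula/change-of-variable estimate used in the proof of Lemma \ref{disc_cont}: for $r<1/2$ and $\beta(z,a_k)<r$,
$$|f(z)-f(a_k)|^p \lesssim r^p \int_{D(a_k,1)} |f(w)|^p \frac{(1-|a_k|^2)^{n+1}}{|1-\langle w,a_k\rangle|^{2n+2}}\,dV(w),$$
with constant independent of $r$. Since $|1-\langle w,a_k\rangle|\asymp 1-|a_k|^2$ on $D(a_k,1)$, bounding the integrand trivially gives
$$|f(z)-f(a_k)|^p \lesssim r^p \sup_{w\in D(a_k,1)}|f(w)|^p.$$

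Combining these two steps and using $(1-|a_k|^2)^\beta\asymp (1-|w|^2)^\beta$ for $w\in D(a_k,1)$, together with $\bigcup_{a_k\in\widetilde{\Gamma}(\xi)}D(a_k,1)\subset \widetilde{\widetilde{\Gamma}}(\xi)$, yields
$$\sup_{z\in\Gamma(\xi)}|f(z)|^p(1-|z|^2)^\beta \lesssim r^p \sup_{w\in \widetilde{\widetilde{\Gamma}}(\xi)}|f(w)|^p(1-|w|^2)^\beta + \sup_{a_k\in\widetilde{\Gamma}(\xi)}|f(a_k)|^p(1-|a_k|^2)^\beta.$$
Integrating over $\Sn$ and invoking the standard fact that $L^p(\Sn)$-norms of non-tangential maximal functions are comparable for different apertures, the first term on the right is controlled by a constant multiple of the left-hand side.

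The main obstacle is the absorption step: because the constant implied by $\lesssim$ does not depend on $r$, fixing $r_0$ small enough forces the $r^p$-term to be swallowed into the left-hand side, \emph{provided} that the left-hand side is finite. This last proviso is precisely the assumption in the statement, so the argument closes. Beyond this, the only delicate ingredient is the $r$-independence of the constant in the pointwise mean-value bound for $|f(z)-f(a_k)|^p$, which is already established in the proof of Lemma \ref{disc_cont}.
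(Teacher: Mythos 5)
Your proposal is correct and follows essentially the same route as the paper: cover $\Gamma(\xi)$ by Bergman balls about lattice points lying in an enlarged region, split off $|f(z)-f(a_k)|^p$ via the reproducing-formula estimate with its $r$-independent constant, pass to a larger aperture, and absorb the $r^p$-term using the finiteness of the left-hand side together with the equivalence of $T^p_\infty$ quasinorms for different apertures. The only cosmetic difference is that the paper first inserts the subharmonic mean over $D(a_k,2r)$ before performing the splitting, whereas you apply the splitting pointwise; both are the same "small modification" of the argument for Lemma \ref{disc_cont}.
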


\begin{proof}
Note first that
\begin{align*}
|f(z)|^p(1-|z|^2)^\beta &\lesssim \frac{1}{(1-|z|^2)^{n+1}}\int_{D(z,r)}|f(w)|^p (1-|w|^2)^\beta dV(w) \\
&\lesssim \frac{1}{(1-|z|^2)^{n+1}}\int_{D(a_k,2r)}|f(w)|^p (1-|w|^2)^\beta dV(w),
\end{align*}
for some $a_k$ in the lattice. Therefore, for $0<r<1/4$, we have
$$\sup_{z \in \Gamma(\xi)} |f(z)|^p(1-|z|^2)^\beta \lesssim \sup_{a_k \in \widetilde{\Gamma}(\xi)} \frac{1}{(1-|a_k|^2)^{n+1}}\int_{D(a_k,2r)}|f(w)|^p (1-|w|^2)^\beta dV(w).$$
A small modification of the corresponding argument in the previous lemma gives us
\begin{displaymath}
\begin{split}
\sup_{z \in \Gamma(\xi)} |f(z)|^p &(1-|z|^2)^\beta \\
&\lesssim r^p \sup_{z \in \widetilde{\widetilde{\Gamma}}(\xi)} |f(z)|^p(1-|z|^2)^\beta+\sup_{a_k \in \widetilde{\Gamma}(\xi)} |f(a_k)|^p (1-|a_k|^2)^{\beta}.
\end{split}
\end{displaymath}
Integrating over $\Sn$ gives the result, by the same reasoning as before.
\end{proof}

\smallskip

\section{The case $p\leq q$}\label{pleq}

In this section we prove the items (1) and (2) of our main theorem. When $n=1$, these result can be found in the paper of Wu \cite{Wu}. However, our proofs differ substantially from his. We also note that Theorem \ref{pleqmin} is formulated in terms of a Bloch type condition, which we find easier to use in practice than the $s$-Carleson condition in \cite{Wu}. Let us start with the following.

\begin{theorem}\label{pleqmin}
Let $0<p\leq \min\{2,q\}$ or  $2<p<q<\infty$ and $\alpha>-1$. Then $J_b:A^p_\alpha\to H^q$ is bounded if and only if
$$\|b\|_{\mathcal{B}^{\gamma}}:=\sup_{z\in \Bn} |Rb(z)|(1-|z|^2)^{\gamma}<\infty.$$
with $\gamma=\frac{n}{q}+1-\frac{(n+1+\alpha)}{p}$. Moreover,  $\|J_b\|_{A_\alpha^p\to H^q}\asymp \|b\|_{\mathcal{B}^{\gamma}}.$
\end{theorem}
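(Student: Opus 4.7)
The proof naturally splits into necessity and sufficiency. For necessity I use normalized Bergman reproducing kernels as test functions; for sufficiency I apply the Dirichlet-type embeddings of Theorem \ref{embed2} to $J_bf$ (which vanishes at the origin), together with the pointwise Bloch bound $|Rb(z)|\le \|b\|_{\mathcal{B}^\gamma}(1-|z|^2)^{-\gamma}$.

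For necessity, fix $a\in\Bn$ and consider
$$f_a(z)=\frac{(1-|a|^2)^s}{(1-\langle z,a\rangle)^{s+(n+1+\alpha)/p}}$$
for $s$ large. By Lemma \ref{IctBn}, $\|f_a\|_{A^p_\alpha}\lesssim 1$ uniformly in $a$. Since $R(J_bf_a)=f_a\cdot Rb$ and $J_bf_a(0)=0$, Calder\'on's area theorem (Theorem \ref{area}) gives
$$\|J_bf_a\|_{H^q}^q \asymp \int_{\Sn}\Big(\int_{\Gamma(\xi)}|f_a(z)|^2|Rb(z)|^2(1-|z|^2)^{1-n}dV(z)\Big)^{q/2}d\sigma(\xi).$$
Restricting the outer integral to $I(a)=\{\xi\in\Sn:a\in\Gamma(\xi)\}$ (whose $\sigma$-measure is $\asymp(1-|a|^2)^n$) and the inner one to a small Bergman ball $D(a,r)$ which, for a suitable aperture choice, lies inside $\Gamma(\xi)$ for every $\xi\in I(a)$, I exploit $|f_a(z)|\asymp(1-|a|^2)^{-(n+1+\alpha)/p}$ and $(1-|z|^2)\asymp(1-|a|^2)$ on $D(a,r)$, together with the sub-mean-value inequality for the plurisubharmonic function $|Rb|^2$ (via the automorphism exchanging $0$ and $a$) to obtain $\int_{D(a,r)}|Rb|^2\,dV\gtrsim (1-|a|^2)^{n+1}|Rb(a)|^2$. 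Collecting the powers of $1-|a|^2$ then yields $\|J_bf_a\|_{H^q}^q\gtrsim (1-|a|^2)^{q\gamma}|Rb(a)|^q$, whence $\|b\|_{\mathcal{B}^\gamma}\lesssim \|J_b\|_{A^p_\alpha\to H^q}$.

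For sufficiency I consider two subcases. When $p\le q\le 2$, the first part of Theorem \ref{embed2} yields $\|J_bf\|_{H^q}^q\lesssim \int_{\Bn}|f(z)|^q|Rb(z)|^q(1-|z|^2)^{q-1}dV(z)$; using the Bloch bound to replace $|Rb|^q$ by $\|b\|_{\mathcal{B}^\gamma}^q(1-|z|^2)^{-q\gamma}$ turns the weight into $(1-|z|^2)^\beta$ with $\beta=q(n+1+\alpha)/p-n-1$, which is precisely the parameter for which Theorem \ref{embed} gives the bounded inclusion $A^p_\alpha\subset A^q_\beta$. When $p<q$ and $q>2$ (which covers both remaining possibilities $p\le 2<q$ and $2<p<q$), the second part of Theorem \ref{embed2} applied with the same $p$ gives $\|J_bf\|_{H^q}^p\lesssim \int_{\Bn}|f(z)|^p|Rb(z)|^p(1-|z|^2)^{p+np/q-n-1}dV(z)$, and now the Bloch bound together with the value of $\gamma$ collapses the weight exponent to exactly $\alpha$, producing $\|f\|_{A^p_\alpha}^p$. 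The main conceptual step, beyond choosing the right embedding theorem for each subcase, is verifying that these exponent computations land exactly on the Bergman embedding parameter in the first subcase and on $\alpha$ itself in the second; this algebraic coincidence is precisely the reason the specific value $\gamma=n/q+1-(n+1+\alpha)/p$ is forced.
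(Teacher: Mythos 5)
Your proposal is correct, and the sufficiency half is essentially the paper's argument: apply the Dirichlet-type embeddings of Theorem \ref{embed2} to $J_bf$, use the identity $R(J_bf)=f\,Rb$ and the Bloch bound, and check that the exponents collapse to the Bergman parameter. The only cosmetic difference there is the case split: the paper separates $p<q$ (second embedding, weight exponent landing exactly on $\alpha$) from $p=q\le 2$ (first embedding), whereas you separate $p\le q\le 2$ (first embedding plus the inclusion $A^p_\alpha\subset A^q_\beta$ of Theorem \ref{embed}, with $\beta=q(n+1+\alpha)/p-n-1$ matching exactly) from $p<q$, $q>2$; both splits cover the hypothesis and both computations are right. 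The necessity is where you take a genuinely different route. The paper uses the same family of test functions $f_z$ but then invokes only the elementary pointwise growth estimate $|Rg(z)|\lesssim(1-|z|^2)^{-n/q-1}\|g\|_{H^q}$ applied to $g=J_bf_z$ at the point $z$ itself, which gives the Bloch bound in one line. You instead go through Calder\'on's area theorem, restrict to $I(a)$ and to a Bergman ball $D(a,r)$, and use the sub-mean-value property of $|Rb|^2$; the power count $(1-|a|^2)^{q\gamma}|Rb(a)|^q$ comes out correctly (the aperture enlargement you flag is handled by the equivalence of area quasinorms over different apertures). Your argument is longer but has the advantage of being the template that the paper itself needs later for the harder necessity proofs in Section \ref{pbig}; the paper's version is the quicker derivative-growth shortcut available in this easy range.
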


\begin{proof}
Let us first consider the necessity. By the standard pointwise estimate for the derivative of Hardy space functions, we have
$$|f(z)||Rb(z)|\lesssim (1-|z|^2)^{-n/q-1}\|J_b\|_{A^p_\alpha\to H^q}\cdot \|f\|_{A^p_\alpha}.$$
Now, taking
$$f(u)=f_ z(u)=\frac{(1-|z|^2)^{s-\frac{n+1+\alpha}{p}}}{(1-\langle u,z\rangle)^s}$$
for large enough $s$ gives that
\begin{equation*}\label{bloch}
\sup_{z \in \Bn}|Rb(z)|(1-|z|^2)^{\frac{n}{q}+1-\frac{n+1+\alpha}{p}}\lesssim\|J_b\|_{A_\alpha^p\to H^q}<\infty.
\end{equation*}
and the necessity is proven.

Next, we look for the sufficiency. Note that $p$ and $q$ of the theorem satisfy either $p<q$ or $p= q\leq 2$. In the former case,  by using the second part of Theorem \ref{embed2},
\begin{align*}
\|J_b f\|_{H^q}&\lesssim \|R(J_b f)\|_{A^p_{p+np/q-n-1}}\\
&=\left(\int_{\Bn}|f(z)|^p |Rb(z)|^p (1-|z|^2)^{p+np/q-n-1}dV(z)\right)^{1/p}\\
&\lesssim \|b\|_{\mathcal{B}^{\gamma}} \cdot \|f\|_{A^p_\alpha}.
\end{align*}
In the case $q=p\leq 2$, we  use the first part of Theorem  \ref{embed2}:
\begin{align*}
\|J_b f\|_{H^p}&\lesssim \|R(J_b f)\|_{A^p_{p-1}}=\left(\int_{\Bn}|f(z)|^p |Rb(z)|^p (1-|z|^2)^{p-1}dV(z)\right)^{1/p}\\
&\lesssim \|b\|_{\mathcal{B}^{1-\frac{(1+\alpha)}{p}}} \cdot \|f\|_{A^p_\alpha}.
 \end{align*}
 Therefore, in both cases,
\begin{equation*}\label{inversebloch}
\|J_b\|_{A_\alpha^p\to H^q}\lesssim\|b\|_{\mathcal{B}^{\gamma}}.
\end{equation*}

\noindent This completes the proof.
\end{proof}

We next prove the item (2) of our main theorem.

\begin{theorem}\label{2<p=q}
Let $2<p<\infty$ and $\alpha>-1$. Then $J_b:A^p_\alpha\to H^p$ is bounded if and only if
$$d\mu_b(z):=|Rb(z)|^{\frac{2p}{p-2}}(1-|z|^2)^{\frac{p-2\alpha}{p-2}} dV(z)$$ is a Carleson measure. Moreover, $\|J_b\|_{A_\alpha^p\to H^p}\asymp\|\mu_b\|_{CM_ 1}^{\frac{p-2}{2p}}.$
\end{theorem}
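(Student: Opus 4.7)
The plan hinges on the Hardy--Stein identity (Theorem \ref{HS}) applied to $J_bf$: since $R(J_bf) = f\,Rb$, one has
$$
\|J_bf\|_{H^p}^p \;\asymp\; \int_{\Bn}|J_bf(z)|^{p-2}\,|f(z)|^2\,|Rb(z)|^2\,(1-|z|^2)\,dV(z).
$$
The content of the theorem is that this quantity is controlled by $\|f\|_{A^p_\alpha}^p$ precisely when $d\mu_b$ is a Carleson measure, and both directions can be extracted from this single identity after one correctly weighted H\"older split.

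For the sufficiency, assume $\mu_b$ is Carleson. I would apply H\"older's inequality with conjugate exponents $p/2$ and $p/(p-2)$ to the factorization
$$
|J_bf|^{p-2}|f|^2|Rb|^2(1-|z|^2) = \bigl[|f|^2(1-|z|^2)^{2\alpha/p}\bigr]\cdot\bigl[|J_bf|^{p-2}|Rb|^2(1-|z|^2)^{1-2\alpha/p}\bigr].
$$
The first factor raised to $p/2$ integrates to $\|f\|_{A^p_\alpha}^p$, while the second factor raised to $p/(p-2)$ reproduces exactly $|J_bf|^p\,d\mu_b$, giving
$$
\|J_bf\|_{H^p}^p \;\lesssim\; \|f\|_{A^p_\alpha}^2\,\Bigl(\int_{\Bn}|J_bf|^p\,d\mu_b\Bigr)^{(p-2)/p}.
$$
Duren's Carleson embedding (the $p=q$ case in the discussion after \eqref{sCM}) bounds the second factor by $\|\mu_b\|_{CM}^{(p-2)/p}\|J_bf\|_{H^p}^{p-2}$, and after a standard approximation $(J_bf)_\rho = J_{b_\rho}f_\rho$, $\rho\uparrow 1$, to guarantee a priori finiteness, one divides out $\|J_bf\|_{H^p}^{p-2}$ to conclude $\|J_bf\|_{H^p}\lesssim\|\mu_b\|_{CM}^{(p-2)/(2p)}\|f\|_{A^p_\alpha}$.

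For the necessity, I would test against Rademacher-randomized reproducing-kernel atoms. Fix an $r$-lattice $\{a_k\}$ and, for $c\in\ell^p$, set
$$
f_t(u) = \sum_k c_k\,r_k(t)\,\frac{(1-|a_k|^2)^{s-(n+1+\alpha)/p}}{(1-\langle u,a_k\rangle)^s}
$$
with $s$ large; Coifman--Rochberg gives $\|f_t\|_{A^p_\alpha}\lesssim\|c\|_{\ell^p}$ uniformly in $t$. Applying Calder\'on's area formula (Theorem \ref{area}) to $J_bf_t$, Kahane's inequality to interchange the $p$-th and $2$nd powers in the $t$-integral over each $\Gamma(\xi)$, and orthogonality of the Rademacher functions collapses the $t$-integral. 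Lower-bounding the kernels on the Bergman balls $D(a_k,r)$ by $(1-|a_k|^2)^{-(n+1+\alpha)/p}$ and invoking the subharmonic mean value inequality for $|Rb|^2$ then produces, after absorbing a $(1-|a_k|^2)^{n/p}$ weight, the discrete multiplier inequality $\|\{c_k\nu_k\}\|_{T^p_2(Z)}\lesssim \|c\|_{T^p_p(Z)}$ with $\nu_k = |Rb(a_k)|(1-|a_k|^2)^{(p-1-\alpha)/p}$. The tent-space duality of Theorem \ref{TTD1} (or a direct Carleson-testing argument on the characteristic sequences of Carleson tents $Q(u)$) then forces $\nu\in T^\infty_{2p/(p-2)}(Z)$; a matching-of-exponents computation identifies this with the discretization of the Carleson condition for $\mu_b$, and subharmonicity of $|Rb|^{2p/(p-2)}$ passes back to the continuous Carleson condition.

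The sufficiency is strikingly clean once the H\"older split is correctly weighted; the main obstacle is the necessity, where the Coifman--Rochberg decomposition, the Kahane and Khinchine inequalities, the tent-space duality of Theorem \ref{TTD1}, and the discrete-to-continuous Carleson passage must be interleaved in the right order, with particular care taken to match the powers of $(1-|a_k|^2)$ between $\nu$ and the target measure $\mu_b$.
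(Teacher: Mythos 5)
Your sufficiency argument is correct, and it is a genuinely different route from the paper's. You run the Hardy--Stein identity (Theorem \ref{HS}) on $J_bf$, split the integrand by H\"older with exponents $p/2$ and $p/(p-2)$ so that one factor integrates to $\|f\|_{A^p_\alpha}^p$ and the other to $\int_{\Bn}|J_bf|^p\,d\mu_b$ (the bookkeeping $(1-2\alpha/p)\cdot\tfrac{p}{p-2}=\tfrac{p-2\alpha}{p-2}$ is exactly right), apply the Carleson embedding $H^p\hookrightarrow L^p(\mu_b)$, and divide out $\|J_bf\|_{H^p}^{p-2}$. The paper instead works with the area integral of $d\mu_{f,b}=|f|^2|Rb|^2\,dV_1$, shows $I_d:H^{p/(p-2)}\to L^1(\mu_{f,b})$ is bounded, and invokes Luecking's Theorem \ref{LT}; that route buys you freedom from the division trick and hence from any a priori finiteness issue. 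In your version the dilation step is the one point needing justification: you must check $\sup_{\rho}\|\mu_{b_\rho}\|_{CM}\lesssim\|\mu_b\|_{CM}$ (e.g.\ via \eqref{sCM}), which is standard but should be said. Both arguments give the correct norm estimate $\|J_b\|\lesssim\|\mu_b\|_{CM}^{(p-2)/(2p)}$.

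For the necessity the paper does something quite different and entirely deterministic: it applies Theorem \ref{LT} in the converse direction to get $\int|h|\,d\mu_{f,b}\lesssim\|J_b\|^2\|f\|^2\|h\|_{H^{p/(p-2)}}$, reads this as a Bergman-space Carleson condition for $d\nu_{h,b}=|h||Rb|^2dV_1$, invokes the Bergman embedding characterization of \cite{Lue2}, \cite{PZ}, and finishes by testing with a single kernel $h_a$. Your randomized-kernel scheme does produce the discrete multiplier inequality (it is \eqref{upper} with $q=p$, after the missing Lemma \ref{Gamma} step), but your primary concluding mechanism has a genuine gap: at $p=q$ the outer tent index degenerates, so the target $T^{\infty}_{2p/(p-2)}(Z)$ is not among the duals covered by Theorem \ref{TTD1} (which requires $1<p<\infty$), and Proposition \ref{factor} is stated only for finite indices --- this degeneration is precisely why the paper treats $p=q$ separately from $q<p$. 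Your parenthetical fallback is the right idea but fails literally with characteristic sequences of tents, since $\sum_{a_k\in Q(u)}(1-|a_k|^2)^n=\infty$; you must instead test \eqref{upper} with $\lambda_k=\nu_k^{2/(p-2)}\chi_{Q(u)}(a_k)$, for which $\lambda_k^2\nu_k^2=\lambda_k^p=\nu_k^{2p/(p-2)}$, so that Jensen's inequality on a cap of radius $\asymp(1-|u|^2)^{1/2}$ gives $S^{p/2}(1-|u|^2)^{n(1-p/2)}\lesssim\|J_b\|^pS$ with $S=\sum_{a_k\in Q(u)}\nu_k^{2p/(p-2)}(1-|a_k|^2)^n$, hence $S\lesssim\|J_b\|^{2p/(p-2)}(1-|u|^2)^n$. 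Finally, the passage back to the continuous Carleson condition goes in the direction ``continuous $\lesssim$ discrete,'' which is not plain subharmonicity but the absorption mechanism of Lemma \ref{disc_cont}. With these repairs your necessity route closes, but as written it is incomplete.
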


\begin{proof}
For any $f\in A_\alpha^p,$ consider the measure $d\mu_{f,b}(z)=|f(z)|^2|Rb(z)|^2
dV_1(z)$. Then
  \begin{align}\label{equi}
 \|J_bf\|_{H^p}^p\asymp\int_{\Sn}\left(\int_{\Gamma(\xi)}|f(z)|^2|Rb(z)|^2
 {dV_1(z)\over(1-|z|^2)^n}\right)^{p/2}d\sigma(\xi)
 =\int_{\Sn}\widetilde{\mu_{f,b}}^{p/2}(\xi)d\sigma(\xi).
 \end{align}

We now first prove the sufficiency part.  \noindent For any $h \in H^{\frac{p}{p-2}}$, we have by H\"older's inequality
\begin{align*}
\int_{\Bn}|h(z)|d\mu_{f,b}(z)=&\int_{\Bn}|h(z)||f(z)|^2|Rb(z)|^2(1-|z|^2)dV(z)\\
 \lesssim&
 \left(\int_{\Bn}|h(z)|^{p\over p-2}|Rb(z)|^{2p\over p-2}(1-|z|^2)^{p-2\alpha\over p-2}dV(z)\right)^{p-2\over p}\|f\|_{A_\alpha^p}^2\\
 \lesssim& \|h\|_{H^{\frac{p}{p-2}}} \cdot \|\mu_b\|_{CM_ 1}^{p-2\over p} \cdot \|f\|_{A^p_\alpha}^2.
 \end{align*}
 The last inequality is due to $d\mu_b$ being a Carleson measure. Therefore, we proved that the identity $I_d: H^{p\over p-2}\to L^1(d\mu_{f,b})$ is bounded.
 By Luecking's theorem (i.e., Theorem \ref{LT}), we have
 \[
 \int_{\Sn}\widetilde{\mu_{f,b}}^{p/2}(\xi)d\sigma(\xi)<\infty
 \]
  with
 \[
  \|\widetilde{\mu_{f,b}}\|_{L^{p\over 2}(d\sigma)}\asymp\|I_d\|_{H^{p\over p-2}\to L^1(d\mu_{f,b})}\lesssim \|\mu_b\|_{CM_ 1}^{p-2\over p}\cdot \|f\|_{A^p_\alpha}^2.
 \]
 Thus $J_b:A_\alpha^p\to H^p$ is bounded and $\|J_b\|_{A_\alpha^p\to H^p}\lesssim \big \|\mu_b \big \|_{CM_ 1}^{\frac{p-2}{2p}}.$
 \noindent This concludes the proof of the sufficiency.

 Let us next consider the necessity. Assume that $J_b:A^p_\alpha\to H^p$ is bounded. Then for any $f\in A^p_\alpha$, by (\ref{equi}), we have
 \begin{equation*}
 \int_{\Sn}\widetilde{\mu_{f,b}}(\xi)^{p/2}d\sigma(\xi)
 \lesssim\|J_b\|_{A_\alpha^p\to H^p}^p \cdot \|f\|_{A_\alpha^p}^p<\infty.
 \end{equation*}
 We see that the measure $d\mu_{f,b}$ satisfies the conditions of Theorem \ref{LT} for parameters $1$ and ${p\over p-2}$. It follows that for any $h\in H^{p\over p-2}$, we have
 \begin{align*}
 \int_{\Bn}|h(z)|d\mu_{f,b}(z) \lesssim\|J_b\|_{A_\alpha^p\to H^p}^2\cdot \|f\|_{A_\alpha^p}^2 \cdot \|h\|_{H^{p\over p-2}}.
 \end{align*}
 Now define the measure $d\nu_{h,b}(z)=|h(z)||Rb(z)|^2dV_1(z)$, and then the estimate above can be written as:

\begin{align*}
 \int_{\Bn} |f(z)|^2 d\nu_{h,b}(z)\lesssim\|J_b\|_{A_\alpha^p\to H^p}^2 \cdot \|f\|_{A_\alpha^p}^2 \cdot \|h\|_{H^{p\over p-2}}.
 \end{align*}
 This is a Carleson measure condition for the Bergman spaces. Therefore, since $p>2$, by the results in  \cite{Lue2} (see also Theorem B in \cite{PZ}), we have for any $r>0$
 $$\hat{\nu}_{h,b}(z):={\nu_{h,b}(D(z,r))\over (1-|z|^2)^{n+\alpha+1}}\in L^{p\over p-2}(\Bn,dV_\alpha)$$
 and $\|\hat{\nu}_{h,b}\|_{L^{p\over p-2}(dV_\alpha)}\lesssim\|J_b\|^2_{A_\alpha^p\to H^p}\,\|h\|_{H^{p\over p-2}}.$
By subharmonicity, we have $$\nu_{h,b}(D(z,r))\gtrsim|h(z)||Rb(z)|^2(1-|z|^2)^{n+2},$$ and this estimate  together with corresponding norm inequalities give us 
\begin{equation}\label{inserth}
\int_{\Bn}|h(z)|^{p\over p-2}|Rb(z)|^{2p\over p-2}(1-|z|^2)^{p-2\alpha\over p-2}dV(z)\lesssim\|J_b\|_{A_\alpha^p\to H^p}^{\frac{2p}{p-2}}\cdot\|h\|_{H^{p\over p-2}}^{\frac{p}{p-2}}.
\end{equation}
For $a \in \Bn$, we define $h:=h_a(z)=(1-\langle z,a\rangle)^{-\beta}$, where $\beta>n(p-2)/p$. By the standard integral estimates
$$\|h_a\|_{H^{p\over p-2}}^{\frac{p}{p-2}} \asymp \frac{1}{(1-|a|^2)^{\frac{\beta p}{p-2}-n}}.$$

\noindent Inserting this into \eqref{inserth} shows that $d\mu_b$ satisfies the condition \eqref{sCM} with $s=1$. Therefore $d\mu_b$  is a Carleson measure and $ \|\mu_b\|_{CM_ 1}^{\frac{p-2}{2p}}\lesssim\|J_b\|_{A_\alpha^p\to H^p}$, which completes the proof of the necessity.

\end{proof}

\section{The case $p>q$}\label{pbig}

We have proven the items (1) and (2) of Theorem \ref{main} in the previous section. In this section, we will use a factorization trick for the tent spaces of sequences to solve the remaining case for $p>q$. Here we also solve the case $0<q<\min\{2,p\}$, which is open  in \cite{Wu}. It is contained in Theorems \ref{th1} and \ref{th11}.

We recall a fact about the factorization of quasinormed spaces of sequences $X,Y$ and $W$. We say that $W=X\cdot Y$, if for $x=\{x_k\} \in X$ and $y=\{y_k\} \in Y$, we have $\|\{x_k y_k\}\|_W \lesssim \|x\|_X \|y\|_Y$, and every $w=\{w_k\} \in W$ can be expressed as $\{w_k\}=\{x_k y_k\}$ with $\|w\|_W \gtrsim \inf \|x\|_X \|y\|_Y$, where the infimum is taken over all possible factorizations of $w$.  Note that then $\tau \in W^*$, if
$$|\tau(\{x_k y_k\})|\lesssim \|x\|_X \|y\|_Y$$
for every $x \in X$ and $y \in Y$.

We will use the following result concerning factorization of sequence tent spaces. A similar result for tent spaces of functions over the upper half-space was proven in \cite{CV} by Cohn and Verbitsky. This version is also likely known to specialists, but we were unable to find a proof in the literature.

\begin{proposition}\label{factor}
Let $0<p,q<\infty$ and $Z=\{a_k\}$ be an $r$-lattice. If $p<p_1, p_2<\infty$, $q<q_1, q_2<\infty$ and satisfy
$$\frac{1}{p_1}+\frac{1}{p_2}=\frac{1}{p}, \quad \text{and} \quad \frac{1}{q_1}+\frac{1}{q_2}=\frac{1}{q}.$$
Then $$T^p_q(Z)=T^{p_1}_{q_1}(Z)\cdot T^{p_2}_{q_2}(Z).$$
\end{proposition}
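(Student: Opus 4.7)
The plan is to prove both containments; only the factorization direction is non-trivial.

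The inclusion $T^{p_1}_{q_1}(Z)\cdot T^{p_2}_{q_2}(Z)\subseteq T^p_q(Z)$, with $\|xy\|_{T^p_q(Z)}\lesssim \|x\|_{T^{p_1}_{q_1}(Z)}\|y\|_{T^{p_2}_{q_2}(Z)}$, follows from two applications of Hölder's inequality. Writing $A_r(\lambda)(\xi)=\big(\sum_{a_k\in\Gamma(\xi)}|\lambda_k|^r\big)^{1/r}$, a first Hölder with exponents $q_1/q$ and $q_2/q$ inside the Kóránýi sum gives the pointwise estimate $A_q(xy)(\xi)\le A_{q_1}(x)(\xi)\,A_{q_2}(y)(\xi)$; a second Hölder with exponents $p_1/p$ and $p_2/p$ on the resulting integral over $\Sn$ yields the norm bound.

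For the factorization direction, given $w\in T^p_q(Z)$, I would assume without loss of generality that $w_k\ge 0$ (absorbing phase into $x$) and introduce the area function $S(\xi)=A_q(w)(\xi)$, so $\|S\|_{L^p(\Sn)}=\|w\|_{T^p_q(Z)}$. My strategy is a Calderón--Zygmund / Whitney decomposition of the super-level sets $O_j=\{\xi\in\Sn:S(\xi)>2^j\}$, $j\in\mathbb{Z}$: each $O_j$ is open and admits a family of non-isotropic balls $\{B_{j,i}\}$ of bounded overlap with $\sum_i \sigma(B_{j,i})\lesssim \sigma(O_j)$, whose associated Carleson tents $\hat B_{j,i}\subset\Bn$ together with the $\hat O_{j+1}$'s partition the ambient ball. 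The pieces $T_{j,i}=(\hat B_{j,i}\setminus \hat O_{j+1})\cap Z$ partition the points $a_k$ on which $w$ is supported, and to each such $a_k$ I assign a dyadic scale $j(k)$ corresponding to its stopping piece. Setting $\delta=p/p_1-q/q_1$ (positive after swapping the roles of the two factors if necessary), the factorization is
\begin{equation*}
x_k=w_k^{q/q_1}\,2^{j(k)\delta},\qquad y_k=w_k^{q/q_2}\,2^{-j(k)\delta}.
\end{equation*}
The identity $q/q_1+q/q_2=1$ yields $x_ky_k=w_k$, and the bound for $x$ follows easily from the pointwise estimate $2^{j(k)}\lesssim S(\xi)$ valid for $a_k\in\Gamma(\xi)$—a direct consequence of $a_k\in\hat O_{j(k)}$, which forces $I(a_k)\subset O_{j(k)}$—and gives $A_{q_1}(x)(\xi)\lesssim S(\xi)^{q/q_1+\delta}=S(\xi)^{p/p_1}$, hence $\|x\|_{T^{p_1}_{q_1}(Z)}^{p_1}\lesssim\|w\|_{T^p_q(Z)}^p$.

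The main obstacle is the companion estimate $\|y\|_{T^{p_2}_{q_2}(Z)}^{p_2}\lesssim\|w\|_{T^p_q(Z)}^p$, because the dyadic weight $2^{-j(k)\delta}$ in $y_k$ runs against a direct pointwise majorization by $S(\xi)$. I plan to circumvent this by expanding the $T^{p_2}_{q_2}$-norm as a dyadic double sum over the Whitney pieces, using the stopping-time size bound $\sum_{a_k\in T_{j,i}}w_k^q(1-|a_k|^2)^n\lesssim 2^{jq}\sigma(B_{j,i})$ on each atomic piece, and finally summing via the Calderón--Zygmund estimate $\sum_{j,i}2^{jp}\sigma(B_{j,i})\lesssim\|S\|_{L^p(\Sn)}^p$. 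The strict inequalities $p<p_1,p_2$ and $q<q_1,q_2$ assumed in the proposition provide precisely the exponent slack needed to make the dyadic tails converge, and the algebraic identities $1/p_1+1/p_2=1/p$ and $1/q_1+1/q_2=1/q$ ensure that the exponents balance exactly, yielding $\|x\|_{T^{p_1}_{q_1}(Z)}\cdot\|y\|_{T^{p_2}_{q_2}(Z)}\lesssim\|w\|_{T^p_q(Z)}$.
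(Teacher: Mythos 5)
Your approach is genuinely different from the paper's: the paper reduces everything to the two extreme factorizations $T^p_q(Z)=T^p_\infty(Z)\cdot T^\infty_q(Z)$, builds the $T^p_\infty$ factor from an averaged maximal function of $A_q(w)^{s/q}$ (controlled via the non-tangential maximal function and the Hardy--Littlewood theorem), and verifies that the quotient lies in $T^\infty_q$ by a direct Carleson-measure computation. You instead attempt a direct Whitney/stopping-time factorization for general finite exponents. Your first inclusion (two applications of H\"older) is correct, and indeed simpler than the paper's duality argument for that direction; your estimate for the factor $x$ is also sound, since $a_k\in\Gamma(\xi)$ and $a_k\in\hat O_{j(k)}$ give $\xi\in I(a_k)\subset O_{j(k)}$, hence $2^{j(k)}\lesssim S(\xi)$ and $A_{q_1}(x)(\xi)\lesssim S(\xi)^{p/p_1}$.

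The gap is in the companion estimate for $y$, which you flag as the main obstacle and then only outline. The plan (expand as a dyadic double sum over Whitney pieces, apply the size bound $\sum_{a_k\in T_{j,i}}w_k^q(1-|a_k|^2)^n\lesssim 2^{jq}\sigma(B_{j,i})$, and sum using $\sum_{j,i}2^{jp}\sigma(B_{j,i})\lesssim\|S\|_{L^p(\Sn)}^p$) closes only when the outer exponent $p_2/q_2$ is at most $1$: then $\bigl(\sum_k\bigr)^{p_2/q_2}\le\sum_k(\cdot)^{p_2/q_2}$, the $\xi$-integral collapses by Fubini to a single sum over $k$, and one extra H\"older on each piece (using $\sum_{a_k\in\hat B_{j,i}}(1-|a_k|^2)^n\lesssim\sigma(B_{j,i})$) yields exactly $2^{jp_2(q/q_2-\delta)}\sigma(B_{j,i})=2^{jp}\sigma(B_{j,i})$. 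When $p_2>q_2$, which does occur for admissible exponents, the power cannot be pulled inside the Kor\'anyi sum, and the pointwise ingredient runs the wrong way: on $\Gamma(\xi)$ the stopping indices satisfy only $j(k)\le j_0(\xi)$ with no lower bound, so the weights $2^{-j(k)\delta q_2}$ are unbounded above and $A_{q_2}(y)(\xi)$ has no majorization by a power of $S(\xi)$. You would need an additional mechanism here --- an extra H\"older in the scale $j$ exploiting the slack $\delta>0$, a duality step, or the paper's reduction to the endpoints $q_2=\infty$ and $p_2=\infty$, where the aggregation over $\Gamma(\xi)$ degenerates and the estimate localizes. A further unaddressed technicality: the size bound on $T_{j,i}$ requires $\sigma(I(a_k)\setminus O_{j+1})\gtrsim(1-|a_k|^2)^n$, which forces working with the enlarged sets $O_{j+1}^*=\{M[\mathbf{1}_{O_{j+1}}]>1/2\}$ rather than $O_{j+1}$, and this must then be reconciled with the definition of $j(k)$ used in the $x$-estimate. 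As written, the proof of the factorization direction is incomplete.
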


\begin{proof}
It suffices to prove
\begin{equation}\label{infinf}
T^p_q(Z)=T^p_\infty(Z)\cdot T^\infty_q(Z), \quad 0<p,q<\infty.
\end{equation}
Based on this we can obtain the desired result  by using the following facts
$$T^p_\infty(Z)=T^{p_1}_\infty(Z)\cdot T^{p_2}_\infty(Z), \qquad T^\infty_q(Z)=T^\infty_{q_1}(Z)\cdot T^\infty_{q_2}(Z).$$

So let us prove \eqref{infinf}. We follow the idea from \cite{CV}. We begin by showing that, for sequences $\alpha=\{\alpha_ k\}\in T^p_{\infty}(Z)$ and $\beta=\{\beta_ k\}\in T^{\infty}_ q(Z)$, we have $\lambda=\alpha\cdot \beta \in T^p_ q(Z)$ with $$\|\lambda\|_{T^p_ q(Z)}\lesssim \|\alpha \|_{T^p_{\infty}(Z)}\cdot \|\beta \|_{T^{\infty}_ q(Z)}.$$
We will use the inequality
\begin{equation}\label{InC}
\sum_ k |\alpha_ k |^p \, |\beta_ k|^q \,(1-|a_ k|^2)^n \lesssim \|\alpha \|^p_{T^p_{\infty}(Z)}\cdot \|\beta \|^q_{T^{\infty}_ q(Z)},
\end{equation}
that can be proved in the same manner as some variants appearing in \cite{CMS,PR1} (see also \cite{PP}). We have
\begin{equation}\label{EqTB}
\| \alpha \cdot \beta \|^p_{T^p_ q(Z)}= \int_{\Sn} \left ( \sum_{a_ k \in \Gamma (\zeta)} |\alpha_ k |^q \,|\beta_ k|^q \right )^{p/q}\!\! d\sigma(\zeta).
\end{equation}
If $p/q>1$, consider a positive function $\varphi \in L^{p/(p-q)}(\Sn)$. Then
\[
\begin{split}
\int_{\Sn}  \Big (\sum_{a_ k \in \Gamma (\zeta)} |\alpha_ k |^q \,|\beta_ k|^q \Big )\,\varphi(\zeta) \,d\sigma(\zeta) &
\lesssim \int_{\Sn}  \Big (\sum_{k} |\alpha_ k |^q \,|\beta_ k|^q \,\frac{(1-|a_ k|^2)^{2n}}{|1-\langle \zeta,a_ k \rangle |^{2n}}\Big )\,\varphi(\zeta) \,d\sigma(\zeta)\\
&= \sum_ k |\alpha_ k |^q \,|\beta_ k|^q \,(1-|a_ k|^2)^{n} \, P\varphi (a_ k),
\end{split}
\]
where $P\varphi$ denotes the Poisson integral of $\varphi$. Recall that, when $\beta \in T^{\infty}_ q$, the measure $\mu_ {\beta}= \sum_ k |\beta_ k|^q(1-|a_ k|^2)^n \delta_{a_ k}$ is a Carleson measure with $\|\mu_ {\beta}\|_{CM_ 1} \asymp \|\beta \|^q_{T^{\infty}_ q(Z)}$. Hence
\[
\sum_ k |P\varphi (a_ k) |^{p/(p-q)} |\beta_ k|^q \,(1-|a_ k|^2)^n \lesssim   \|\beta \|^q_{T^{\infty}_ q(Z)} \cdot \|\varphi \|^{p/(p-q)}_{L^{p/(p-q)}(\Sn)}
\]
This together with the estimate \eqref{InC} and H\"{o}lder's inequality yield
\[
\begin{split}
\int_{\Sn}  \sum_{a_ k \in \Gamma (\zeta)} &|\alpha_ k |^q \,|\beta_ k|^q \,\varphi(\zeta) \,d\sigma(\zeta)
\\
&\le \left (\sum_ k |P\varphi (a_ k) |^{p/(p-q)} |\beta_ k|^q \,(1-|a_ k|^2)^n\right )^{(p-q)/p} \left ( \sum_ k |\alpha_ k |^p \, |\beta_ k|^q \,(1-|a_ k|^2)^n \right )^{q/p}
\\
& \lesssim \|\varphi \|_{L^{p/(p-q)}(\Sn)}\cdot \|\alpha \|_{T^p_{\infty}(Z)}^q \cdot \|\beta\|_{T^{\infty}_ q(Z)}^{q}.
\end{split}
\]
By duality, we obtain that $\|\alpha \cdot \beta \|_{T^p_ q(Z)}\lesssim \|\alpha \|_{T^p_{\infty}(Z)} \cdot \|\beta\|_{T^{\infty}_ q(Z)}$ when $p>q$. If $p=q$, this inequality is a direct consequence of the estimate \eqref{InC}.

Finally, consider the case that $p<q$. In this case, starting from \eqref{EqTB}, we use H\"{o}lder's inequality with exponent $q/p>1$ (that has conjugate exponent $q/(q-p)$), and then the estimate \eqref{InC} in order to get
\[
\begin{split}
\|\alpha \cdot \beta \|^p_{T^p_ q(Z)} & \le \int_{\Sn} \left (\sup_{a_ k\in \Gamma(\zeta)} |\alpha_ k| \right )^{\frac{p(q-p)}{q}}\left ( \sum_{a_ k \in \Gamma (\zeta)} |\alpha_ k |^p \,|\beta_ k|^q \right )^{p/q}\!\! d\sigma(\zeta)
\\
& \le \|\alpha \|_{T^p_{\infty}(Z)}^{\frac{p(q-p)}{q}} \left (\int_{\Sn}  \sum_{a_ k \in \Gamma (\zeta)} |\alpha_ k |^p \,|\beta_ k|^q \, d\sigma(\zeta)\right )^{p/q}
\\
& \asymp \|\alpha \|_{T^p_{\infty}(Z)}^{\frac{p(q-p)}{q}} \left ( \sum_{k} |\alpha_ k |^p \,|\beta_ k|^q \, (1-|a_ k|^2)^n \right )^{p/q}
\\
& \lesssim \|\alpha \|_{T^p_{\infty}(Z)}^{p} \cdot \|\beta\|_{T^{\infty}_ q(Z)}^p.
\end{split}
\]

\mbox{}
\\
Next, assume that $\lambda \in T^p_q(Z)$ and let $0<s<p$. We define the function
$$\alpha(z)=\left(\frac{1}{\sigma(I(z))}\int_{I(z)}\Big(\sum_{a_j \in \Gamma(\xi)}|\lambda_j|^q\Big)^{s/q}d\sigma(\xi)\right)^{1/s},$$
and set $\alpha_k=\alpha(a_k)$ for $a_k \in Z$. We want to show that $\{\alpha_k\} \in T^p_\infty(Z)$, which is equivalent to
$$\xi \mapsto \sup_{a_k \in \Gamma(\xi)}|\alpha_k|^s \in L^{p/s}(\Sn).$$
Writing
$$A_{\lambda}(\xi)=\sum_{a_j \in \Gamma(\xi)}|\lambda_j|^q,$$
we note that $|\alpha(z)|^s \lesssim P[A_{\lambda}^{s/q}](z)$, which follows easily by noting that when $\xi \in I(z)$, one has $|1-\langle z,\xi\rangle|\asymp 1-|z|^2$. Therefore, according to Theorem \ref{NPM},
$$\sup_{a_k \in \Gamma(\xi)}|\alpha_k|^s \lesssim \sup_{z \in \Gamma(\xi)}|\alpha(z)|^s \lesssim N[P[A_{\lambda}^{s/q}]](\xi)\lesssim M[A_{\lambda}^{s/q}](\xi).$$
Since $p/s>1$, an application of the Hardy-Littlewood theorem \eqref{hlm} gives us that $\{\alpha_k\} \in T^p_\infty(Z)$ and that $\|\{\alpha_k\}\|_{T^p_\infty(Z)}\lesssim \|\lambda\|_{T^p_q(Z)}$.\\

We now set $\beta_k=\lambda_k/\alpha_k$ and show that $\{\beta_k\}\in T^\infty_q(Z)$: that is
$$\mu_\beta=\sum_{k} |\beta_k|^q (1-|a_k|^2)^n \delta_{a_k}$$ is a Carleson measure. Let $\varepsilon>0$ be chosen so that $q/\varepsilon>1$, $s/\varepsilon>1$ and
$$s\geq \frac{q\varepsilon}{q-\varepsilon}.$$
If now $d\nu$ is a probability measure on some space $\Omega$, then for any non-negative measurable function $f$, H\"older's inequality yields
\begin{align*}
1=\left(\int_\Omega f^{-\varepsilon}f^{\varepsilon}d\nu\right)^{1/\varepsilon}&
\leq \left(\int_\Omega f^{-q}d\nu\right)^{1/q}\left(\int_\Omega f^\frac{q\varepsilon}{q-\varepsilon}d\nu\right)^{\frac{q-\varepsilon}{q\varepsilon}}\\
&\leq \left(\int_\Omega f^{-q}d\nu\right)^{1/q}\left(\int_\Omega f^s d\nu\right)^{1/s}.
\end{align*}
Using this estimate in case that $\Omega=I(z)$, $d\nu=\sigma(I(z))^{-1}d\sigma$ and $f=A_{\lambda}^{1/q}$,
gives us
$$|\alpha(z)|^{-q}\lesssim \frac{1}{\sigma(I(z))}\int_{I(z)}\frac{d\sigma(\xi)}{A_{\lambda}(\xi)}.$$

 Now, for $a \in \Bn$, we have
\begin{align*}
\int_{\Bn}\frac{(1-|a|^2)^n}{|1-\langle z,a\rangle|^{2n}}d\mu_\beta(z)&=\sum_{k} \frac{(1-|a|^2)^n}{|1-\langle a_k,a\rangle|^{2n}}\,|\lambda_k|^q \,|\alpha_k|^{-q}(1-|a_k|^2)^n\\
&\lesssim (1-|a|^2)^n\sum_k \frac{|\lambda_k|^q}{|1-\langle a_k,a\rangle|^{2n}}\int_{I(a_k)}\frac{d\sigma(\xi)}{A_{\lambda}(\xi)}\\
&\asymp (1-|a|^2)^n \int_{\Sn}\frac{1}{A_{\lambda}(\xi)}\sum_{a_k \in \Gamma(\xi)}\frac{|\lambda_k|^q}{|1-\langle a_k,a\rangle|^{2n}}\,d\sigma(\xi)\\
&\lesssim (1-|a|^2)^n \int_{\Sn}\frac{1}{|1-\langle \xi,a\rangle|^{2n}}\frac{1}{A_{\lambda}(\xi)}\Big (\sum_{a_k \in \Gamma(\xi)}|\lambda_k|^q\Big ) \,d\sigma(\xi)\\
&= (1-|a|^2)^n \int_{\Sn}\frac{d\sigma(\xi)}{|1-\langle \xi,a\rangle|^{2n}}<\infty.
\end{align*}
The last inequality follows from Lemma \ref{IctBn}. We immediately obtain that $\{\beta_k\} \in T^\infty_q(Z)$ with the estimate $\|\{\beta_k\}\|_{T^\infty_q(Z)}\lesssim 1$. Therefore, given $\lambda \in T^p_ q(Z)$, there are sequences $\alpha \in T^p_{\infty}(Z)$ and $\beta \in T^{\infty}_ q(Z)$ such that $\lambda =\alpha \cdot \beta$ with $\|\alpha \|_{T^p_{\infty}(Z)} \cdot \|\beta \|_{T^{\infty}_ q(Z)} \lesssim \|\lambda \|_{T^p_ q(Z)}$. The proof is complete.
\end{proof}

\begin{theorem}\label{th1}
Let $\alpha>-1$, $0<q<\infty$ and $p>\max\{2,q\}$. Then $J_b:A^p_\alpha\to H^q$ is bounded if and only if the function
$$U_b(\xi):=  \left(\int_{\Gamma(\xi)}|Rb(z)|^{\frac{2p}{p-2}}
(1-|z|^2)^{\frac{2-2\alpha}{p-2}+1-n}dV(z)\right)^{\frac{p-2}{2p}}$$
belongs to $L^{\frac{pq}{p-q}}(\Sn)$. Moreover,  $\|J_b\|_{A_\alpha^p\to H^q} \asymp \|U_b\|_{L^\frac{pq}{p-q}(\Sn)}$.
\end{theorem}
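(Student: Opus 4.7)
By Calder\'on's area theorem (Theorem \ref{area}) and the identity $R(J_bf)=f\cdot Rb$,
\begin{equation*}
\|J_bf\|_{H^q}^q \asymp \int_{\Sn}\biggl(\int_{\Gamma(\xi)}|f(z)|^2|Rb(z)|^2(1-|z|^2)^{1-n}\,dV(z)\biggr)^{q/2}d\sigma(\xi).
\end{equation*}
Applying Lemma \ref{disc_cont} to the holomorphic product $f\cdot Rb$ (with $q=2$ and $\beta=1-n$) on a sufficiently fine $r$-lattice $Z=\{a_k\}$ discretizes this to $\|J_bf\|_{H^q}^q\asymp\|\lambda\|_{T^q_2(Z)}^q$, where $\lambda_k=f(a_k)Rb(a_k)(1-|a_k|^2)$. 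Write $\lambda_k=d_k\rho_k$ with $d_k=f(a_k)(1-|a_k|^2)^{(1+\alpha)/p}$ and $\rho_k=Rb(a_k)(1-|a_k|^2)^{1-(1+\alpha)/p}$; then $\|d\|_{T^p_p(Z)}^p\asymp\|f\|_{A^p_\alpha}^p$ by the standard Bergman discretization, while subharmonicity of $|Rb|^{2p/(p-2)}$ together with Lemma \ref{disc_cont} applied to $Rb$ yields $\|\rho\|_{T^{pq/(p-q)}_{2p/(p-2)}(Z)}\asymp\|U_b\|_{L^{pq/(p-q)}(\Sn)}$.

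\textbf{Sufficiency.} The reciprocal identities $\tfrac{1}{p}+\tfrac{p-q}{pq}=\tfrac{1}{q}$ and $\tfrac{1}{p}+\tfrac{p-2}{2p}=\tfrac{1}{2}$, with $p,pq/(p-q)>q$ and $p,2p/(p-2)>2$, place us in the hypotheses of Proposition \ref{factor}, which yields
\begin{equation*}
T^q_2(Z)=T^p_p(Z)\cdot T^{pq/(p-q)}_{2p/(p-2)}(Z).
\end{equation*}
Its H\"older direction gives $\|\lambda\|_{T^q_2(Z)}\lesssim\|d\|_{T^p_p(Z)}\|\rho\|_{T^{pq/(p-q)}_{2p/(p-2)}(Z)}\lesssim\|f\|_{A^p_\alpha}\|U_b\|_{L^{pq/(p-q)}(\Sn)}$, establishing the desired bound on $\|J_b\|_{A^p_\alpha\to H^q}$.

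\textbf{Necessity.} I test against the randomized functions $f_t(z)=\sum_k r_k(t)c_kK_s(z,a_k)$, where $K_s(z,a)=(1-|a|^2)^s(1-\langle z,a\rangle)^{-s-(n+1+\alpha)/p}$ with $s$ large. By the Coifman-Rochberg theorem $\|f_t\|_{A^p_\alpha}\lesssim\|c\|_{\ell^p}$ uniformly in $t$, so $\int_0^1\|J_bf_t\|_{H^q}^q\,dt\lesssim\|J_b\|^q\|c\|_{\ell^p}^q$. For the lower bound I combine Calder\'on's area theorem, Fubini, Kahane's inequality (applied in the Banach space $X=L^2(\Gamma(\xi),|Rb(z)|^2(1-|z|^2)^{1-n}dV)$), and Khinchine's inequality on $|f_t(z)|^2$; using $|K_s(z,a_k)|\asymp(1-|a_k|^2)^{-(n+1+\alpha)/p}$ on the Bergman balls $D(a_k,r)$ together with the subharmonic mean value for $|Rb|^2$ yields
\begin{equation*}
\int_0^1\|J_bf_t\|_{H^q}^q\,dt\gtrsim\|d\rho\|_{T^q_2(Z)}^q,
\end{equation*}
where the substitution $c_k=d_k(1-|a_k|^2)^{n/p}$ ensures $\|c\|_{\ell^p}\asymp\|d\|_{T^p_p(Z)}$. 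Consequently $\rho$ is a bounded multiplier $T^p_p(Z)\to T^q_2(Z)$ with operator norm $\lesssim\|J_b\|$.

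\textbf{Main obstacle.} The delicate step is deducing $\|\rho\|_{T^{pq/(p-q)}_{2p/(p-2)}(Z)}\lesssim\|J_b\|$ from this multiplier bound. For $q>1$ the plan is to dualize: Theorem \ref{TTD1} gives $(T^q_2(Z))^*\cong T^{q'}_2(Z)$ and $(T^p_p(Z))^*\cong T^{p'}_{p'}(Z)$, and the multiplier estimate becomes $\|\rho\mu\|_{T^{p'}_{p'}(Z)}\lesssim\|J_b\|\|\mu\|_{T^{q'}_2(Z)}$ for every $\mu\in T^{q'}_2(Z)$. With $r=pq/(p-q)$ and $s=2p/(p-2)$, the identities $\tfrac{1}{p}+\tfrac{1}{q'}=\tfrac{1}{r'}$ and $\tfrac{1}{p}+\tfrac{1}{2}=\tfrac{1}{s'}$ allow another application of Proposition \ref{factor} to give $T^{r'}_{s'}(Z)=T^p_p(Z)\cdot T^{q'}_2(Z)$. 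Factoring an arbitrary $\nu\in T^{r'}_{s'}(Z)$ as $\nu=\nu_1\nu_2$ and writing $\sum\rho_k\overline{\nu_k}(1-|a_k|^2)^n=\sum\nu_{1,k}(\rho_k\overline{\nu_{2,k}})(1-|a_k|^2)^n$ extracts $\|\rho\|_{T^r_s(Z)}\lesssim\|J_b\|$ through the $T^p_p$-$T^{p'}_{p'}$ duality and $(T^r_s(Z))^*\cong T^{r'}_{s'}(Z)$. The range $0<q\leq 1$, where tent space duality degenerates to $(T^q_2(Z))^*\cong T^q_\infty(Z)$, requires a parallel but modified argument involving Carleson measure characterizations.
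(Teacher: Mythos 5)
Your setup and your derivation of the key multiplier estimate are essentially the paper's own argument: the same randomized test functions built from kernels on an $r$-lattice, Kahane plus Khinchine, and subharmonicity lead to exactly the paper's inequality \eqref{upper}, i.e.\ to the statement that $\rho=\{Rb(a_k)(1-|a_k|^2)^{1-(1+\alpha)/p}\}$ multiplies $T^p_p(Z)$ boundedly into $T^q_2(Z)$ with norm $\lesssim \|J_b\|$. (Two minor remarks: your sufficiency via discretization and the factorization $T^q_2(Z)=T^p_p(Z)\cdot T^{pq/(p-q)}_{2p/(p-2)}(Z)$ is legitimate, but Lemma \ref{disc_cont} presupposes the left-hand side is finite, so you need the dilation/approximation argument there too — the paper instead runs two applications of H\"older directly on the continuous area integral and avoids this; and in the lower bound you silently restrict $\int_{\Gamma(\xi)}$ to the balls $D(a_k,r)$ with $a_k\in\Gamma(\xi)$, which requires first passing to the kernel-weighted integral via Lemma \ref{Gamma}, since those balls need not lie in $\Gamma(\xi)$. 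Both are fixable technicalities.)

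The genuine gap is in your "main obstacle" step for $0<q\leq 1$. Your plan for $q>1$ — dualize the multiplier to get $\|\rho\mu\|_{T^{p'}_{p'}(Z)}\lesssim\|J_b\|\,\|\mu\|_{T^{q'}_2(Z)}$, factor $T^{r'}_{s'}(Z)=T^p_p(Z)\cdot T^{q'}_2(Z)$ with $r=\frac{pq}{p-q}$, $s=\frac{2p}{p-2}$, and recover $\|\rho\|_{T^r_s(Z)}$ by duality — is a correct variant of the paper's argument in that range, since then $r>q>1$ and all indices sit inside $(1,\infty)$ where Theorem \ref{TTD1} applies. But for $0<q\leq 1$ the duality you invoke, $(T^q_2(Z))^*\cong T^q_\infty(Z)$, is not what Theorem \ref{TTD1} says: that theorem requires the \emph{first} index to lie in $(1,\infty)$ and yields $T^p_\infty$ as a dual only when the \emph{second} index is $\leq 1$. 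When the first index $q$ is $\leq 1$ the space $T^q_2(Z)$ is not covered at all, so your adjoint step has no foundation, and you give no substitute argument. The paper sidesteps this entirely by never dualizing the multiplier operator: it dualizes the target membership $\nu=\{|Rb(a_k)|^q(1-|a_k|^2)^{\frac{q}{p}(p-1-\alpha)}\}\in T^{p/(p-q)}_\eta(Z)$, whose first index $p/(p-q)$ exceeds $1$ for every $0<q<p$. Concretely, it replaces $\nu$ by $\nu^{1/s}$ for $s$ large, identifies $T^{ps/(p-q)}_{\eta s}(Z)$ as the dual of $T^{(ps/(p-q))'}_{(\eta s)'}(Z)=T^{s'}_{2s/(2s-q)}(Z)\cdot T^{ps/q}_{ps/q}(Z)$ via Proposition \ref{factor}, factors an arbitrary element of the predual as $\mu_k=\tau_k\lambda_k^{q/s}$ with $\lambda\in T^p_p(Z)$, and feeds the $\lambda$-factor into \eqref{upper} after two applications of H\"older. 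That is the idea your proof is missing, and without it the theorem is only established for $q>1$.
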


\begin{proof}
Since $p/(p-q)>1$, it will be convenient for us to prove the equivalent claim in terms of $U_b^q \in L^{\frac{p}{p-q}}(\Sn).$

Let us first take care of the sufficiency part. Using the area function description of the Hardy spaces and H\"older's inequalities, we have
\begin{align*}
\|J_b f\|_{H^q}^q \lesssim &\int_{\Sn}
\left(\int_{\Gamma(\xi)}|Rb(z)|^2 |f(z)|^2 (1-|z|^2)^{1-n}dV(z)\right)^{q/2}d\sigma(\xi)\\
\leq &\int_{\Sn}\left(\int_{\Gamma(\xi)}
|Rb(z)|^{\frac{2p}{p-2}}(1-|z|^2)^{\frac{p-2\alpha}{p-2}-n}
dV(z)\right)^{\frac{q(p-2)}{2p}}\\
&\cdot \left(\int_{\Gamma(\xi)}|f(z)|^p (1-|z|^2)^{\alpha-n}dV(z)\right)^{\frac{q}{p}}d\sigma(\xi)\\
\lesssim &\left(\int_{\Sn}U_ b(\xi)^{\frac{pq}{p-q}}\,
d\sigma(\xi)\right)^{\frac{p-q}{p}}\|f\|_{A^p_\alpha}^q.
\end{align*}
This proves the sufficiency part with
 $\|J_b\|_{A_\alpha^p\to H^q} \lesssim \|U_b\|_{L^\frac{pq}{p-q}(\Sn)}$.\\

Let us now consider the necessity. {Suppose that
$J_b:A_\alpha^p\to H^q$
is bounded.}
A straightforward approximation argument (see Lemma 7 in \cite{Per}) using the dilations $b_\rho(z)=b(\rho z)$ ($0<\rho<1$) shows that it suffices to establish the corresponding estimate in the case where the left-hand-side expression in Lemma \ref{disc_cont} is finite. {By Lemma \ref{disc_cont} it is sufficient to prove that}
\begin{equation}\label{disc}
\xi \mapsto \left(\sum_{a_k \in \Gamma(\xi)}|Rb(a_k)|^{\frac{2p}{p-2}}{
(1-|a_k|^2)^{\frac{2-2\alpha}{p-2}+2}}\right)^{\frac{q(p-2)}{2p}} \in L^{\frac{p}{p-q}}(\Sn),
\end{equation}
when $Z=\{a_k\}$ is an $r$-lattice and $r$ is small enough.

Now consider the test functions
$$F_t(z)=\sum_k (1-|a_k|^2)^{n/p}\lambda_k r_k(t)f_k(z),$$
where $\lambda=\{\lambda_k\} \in T^p_p(Z)$, $r_k:[0,1]\to \{-1,+1\}$ are the Rademacher functions, and
$$f_k(z)=\frac{(1-|a_k|^2)^{b-(n+1+\alpha)/p}}{(1-\langle z,a_k\rangle)^b}$$ are suitable kernel functions with $b>(n+1+\alpha)/p$. Notice that $\lambda \in T^p_p(Z)$ if and only if $((1-|a_k|^2)^{n/p}\lambda_k)\in \ell^p$. We could naturally work with $\ell^p$, but we will use a factorization trick, which is more transparent in the tent space notation.

By the area function description of the Hardy spaces,  the construction of $F_t$ and the atomic decomposition of the Bergman spaces, we have for every $t\in [0,1]$,
$$\int_{\Sn} \!\!\left(\int_{\Gamma(\xi)}\Big |Rb(z)\sum_k (1-|a_k|^2)^{n/p}\lambda_k r_k(t)f_k(z)\Big |^2 dV_{1-n}(z)\right)^{q/2} \!\!\! d\sigma(\xi)\lesssim \|J_b\|_{A^p_\alpha\to H^q}^q \|\lambda\|_{T^p_p(Z)}^q.$$
Integrating with respect to $t$ and using Fubini's theorem, we get
\begin{align*}
\int_{\Sn} \int_0^1 &\left(\int_{\Gamma(\xi)}\Big |Rb(z)\sum_k (1-|a_k|^2)^{n/p}\lambda_k r_k(t)f_k(z)\Big |^2 dV_{1-n}(z)\right)^{q/2} \!\!dt \,d\sigma(\xi)\\
&\lesssim \|J_b\|_{A^p_\alpha\to H^q}^q \cdot \|\lambda\|_{T^p_p(Z)}^q.
\end{align*}
Next, we use Kahane's inequality and Fubini's theorem to obtain
\begin{align*}
\int_{\Sn} &\left( \int_{\Gamma(\xi)}\int_0^1 \Big |Rb(z)\sum_k (1-|a_k|^2)^{n/p} \lambda_k r_k(t)f_k(z)\Big |^2 dt \,dV_{1-n}(z) \right)^{q/2} d\sigma(\xi)\\
&\lesssim \|J_b\|_{A^p_\alpha\to H^q}^q \cdot \|\lambda\|_{T^p_p(Z)}^q.
\end{align*}
Further, using Khinchine's inequality gives that
\[
\int_{\Sn} \!\!\left( \sum_k |\lambda_k|^2 \!\int_{\Gamma(\xi)} \!\!|Rb(z)|^2 (1-|a_k|^2)^{2n/p}|f_k(z)|^2 dV_{1-n}(z) \right)^{q/2} \!\! \!\! d\sigma(\xi)
\lesssim \|J_b\|_{A^p_\alpha\to H^q}^q \cdot \|\lambda\|_{T^p_p(Z)}^q.
\]

\noindent For $\theta>n\max\{2/q,1\}$, using Lemma \ref{Gamma} we obtain
\begin{align*}
\int_{\Sn} &\left( \sum_k |\lambda_k|^2 \int_{\Bn} \left(\frac{(1-|z|^2)}{|1-\langle z,\xi\rangle|}\right)^\theta |Rb(z)|^2 (1-|a_k|^2)^{2n/p}|f_k(z)|^2 dV_{1-n}(z) \right)^{q/2} \!\! d\sigma(\xi)\\
\\
\nonumber &\lesssim \|J_b\|_{A^p_\alpha\to H^q}^q \cdot \|\lambda\|_{T^p_p(Z)}^q.
\end{align*}
Replacing the integral over $\Bn$ by an integral over $D(a_k,r)$, and using subharmonicity and standard estimates, we arrive at

\begin{align*}
\int_{\Sn} &\!\!\left( \sum_k |\lambda_k|^2 \left(\frac{(1-|a_k|^2)}{|1-\langle a_k,\xi\rangle|}\right)^\theta |Rb(a_k)|^2 (1-|a_k|^2)^{2-2(1+\alpha)/p} \right)^{q/2} \!\!\!d\sigma(\xi)\\
\\
&\lesssim \|J_b\|_{A^p_\alpha \to H^q}^q\cdot \|\lambda\|_{T^p_p(Z)}^q.
\end{align*}

\noindent Finally, for every $\xi \in \Sn$, summing over only the points $a_k \in \Gamma(\xi)$ (and noting that $a_k \in \Gamma(\xi)$ implies that $(1-|a_k|^2)\asymp |1-\langle a_k,\xi\rangle |$) gives that
\begin{equation}\label{upper}
\int_{\Sn} \!\!\left( \sum_{a_k \in \Gamma(\xi)} \!|\lambda_k|^2 |Rb(a_k)|^2 (1-|a_k|^2)^{2-2(1+\alpha)/p} \right)^{q/2} \!\!\!d\sigma(\xi)\lesssim \|J_b\|_{{A^p_\alpha}\to H^q}^q\cdot \|\lambda\|_{T^p_p(Z)}^q.
\end{equation}

{Recall that  by \eqref{disc}, we want} to prove that
$$\xi \mapsto \left(\sum_{a_k \in \Gamma(\xi)} |Rb(a_k)|^{\frac{2p}{p-2}}(1-|a_k|^2)^{\frac{2-2\alpha}{p-2}+2}\right)^{\frac{q(p-2)}{2p}}$$
belongs to $L^{p/(p-q)}(\Sn)$. Write $\nu=\{\nu_k\}$, where
$$\nu_k=|Rb(a_k)|^q (1-|a_k|^2)^{\frac{q}{p}(p-1-\alpha)}.$$
{This means that we want to prove}
$$\nu \in T^{\frac{p}{p-q}}_\eta(Z), \quad \eta=\frac{2p}{q(p-2)}.$$
For every $s>1$, this is equivalent to the statement $\nu^{1/s}\in T^{ps/(p-q)}_{\eta s}(Z)$.
For $s$ large enough, we have
$$T^{\frac{ps}{p-q}}_{\eta s}(Z)=\left(T^{\left(\frac{ps}{p-q}\right)'}_{(\eta s)'}(Z)\right)^*=\left(T^{s'}_{\frac{2s}{2s-q}}(Z) \cdot T^{\frac{ps}{q}}_{\frac{ps}{q}}(Z)\right)^*,$$
where the last identity is the factorization of the corresponding tent spaces of sequences given in Proposition \ref{factor}.

Let us take $$\mu=\{\mu_k\}\in T^{\left(\frac{ps}{p-q}\right)'}_{(\eta s)'}(Z),$$
and factor it as suggested
$$\mu_k= \tau_k \,\lambda_k^{q/s}, \quad \tau=\{\tau_k\} \in T^{s'}_{\frac{2s}{2s-q}}(Z), \quad \lambda  \in T^p_p(Z).$$
Then (note that we may, without loss of generality, assume that all sequences are positive)
\begin{align*}
\sum_k \mu_k \, \nu_k^{1/s} &\,(1-|a_k|^2)^n =\sum_k \tau_k \,\lambda_k^{q/s}\,\nu_k^{1/s}\,(1-|a_k|^2)^n\\
&\asymp \int_{\Sn} \Big(\sum_{a_k \in \Gamma(\xi)} \tau_k \, \lambda_k^{q/s}\,\nu_k^{1/s}\Big)\,d\sigma(\xi)\\
&\leq \int_{\Sn} \Big(\sum_{a_k \in \Gamma(\xi)}\tau_k^{(2s/q)'}\Big)^{1-q/2s} \Big(\sum_{a_k \in \Gamma(\xi)} \lambda_k^2 \,\nu_k^{2/q}\Big)^{q/2s}\,d\sigma(\xi)\\
&\leq \|\tau\|_{T^{s'}_{\frac{2s}{2s-q}}(Z)}\left(\int_{\Sn} \big(\sum_{a_k \in \Gamma(\xi)} \lambda_k^2 \,\nu_k^{2/q}\Big)^{q/2}d\sigma(\xi)\right)^{1/s}.
\end{align*}
Here the two last inequalities are just applications of H\"older {inequalities with $2s/q$ and $s$, respectively, which are legal by making $s$ large enough.}

Analyzing the second factor by using our earlier estimate \eqref{upper}, we establish
$$\sum_k \mu_k \,\nu_k^{1/s}(1-|a_k|^2)^n \lesssim \|J_b\|_{A^p_\alpha\to H^q}^{q/s}\cdot \|\tau\|_{T^{s'}_{\frac{2s}{2s-q}}(Z)}\cdot  {\|\lambda\|_{T^{{p}}_{{p}}(Z)}^{q/s}}.$$
Taking infimum over all possible factorizations yields
$$\sum_k \mu_k \nu_k^{1/s}(1-|a_k|^2)^n \lesssim \|J_b\|_{A^p_\alpha\to H^q}^{q/s} \cdot \|\mu\|_{T^{\left(\frac{ps}{p-q}\right)'}_{(\eta s)'}(Z)}.$$
We obtain \eqref{disc} by the duality of tent spaces of sequences and $ \|U_b\|_{L^\frac{pq}{p-q}(\Sn)}\lesssim\|J_b\|_{A_\alpha^p\to H^q}$. The proof is therefore completed.
\end{proof}

The following result will complete our main theorem.

\begin{theorem}\label{th11}
Let $0<q<p\leq 2$ and $\alpha>-1$. Then $J_b:A^p_\alpha\to H^q$ is bounded if and only if the function
\[
V_b(\xi):= \sup_{z \in \Gamma(\xi)}|Rb(z)| (1-|z|^2)^{\frac{p-1-\alpha}{p}}
\]
belongs to $L^{\frac{pq}{p-q}}(\Sn).$ Moreover,  $\|J_b\|_{A_\alpha^p\to H^q}\asymp \|V_b\|_{L^{\frac{pq}{p-q}}(\Sn)}$.
\end{theorem}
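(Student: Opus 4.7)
The plan is to prove the two directions separately, with sufficiency resting on the area function characterization of $H^q$ followed by a Bergman-area embedding that uses the constraint $p \le 2$ crucially, and necessity following the same blueprint as Theorem \ref{th1}: Kahane--Khinchine applied to kernel-based test functions, followed by tent-space duality.

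\textbf{Sufficiency.} Assume $\|V_b\|_{L^{pq/(p-q)}(\Sn)}<\infty$. Start from Theorem \ref{area},
\[
\|J_bf\|^q_{H^q} \asymp \int_{\Sn}\Big(\int_{\Gamma(\xi)}|Rb(z)|^2|f(z)|^2(1-|z|^2)^{1-n}dV(z)\Big)^{q/2}d\sigma(\xi),
\]
and use the pointwise bound $|Rb(z)|\le V_b(\xi)(1-|z|^2)^{-(p-1-\alpha)/p}$ valid on $\Gamma(\xi)$ to pull $V_b(\xi)^q$ outside the inner integral; the exponent of $(1-|z|^2)$ inside becomes $2(1+\alpha)/p-1-n$. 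H\"older's inequality in $\xi$ with exponents $p/(p-q)$ and $p/q$ then separates out the factor $\|V_b\|^q_{L^{pq/(p-q)}}$, reducing the problem to the Bergman-area estimate
\[
\int_\Sn \Big(\int_{\Gamma(\xi)}|f(z)|^2 (1-|z|^2)^{2(1+\alpha)/p-1-n}dV(z)\Big)^{p/2}d\sigma \lesssim \|f\|^p_{A^p_\alpha}
\]
for holomorphic $f$ with $p\le 2$. To prove this, I would cover $\Gamma(\xi)$ by Bergman balls $D(a_k,r)$ of an $r$-lattice $\{a_k\}$, apply subharmonicity of $|f|^2$ and the comparability of $1-|z|^2$ across each $D(a_k,r)$ to bound the inner integral by $\sum_{a_k\in\widetilde\Gamma(\xi)}|f(a_k)|^2(1-|a_k|^2)^{2(1+\alpha)/p}$, and then use the elementary subadditivity $(\sum c_i)^{p/2}\le\sum c_i^{p/2}$ available since $p\le 2$, to pass to $\sum_{a_k\in\widetilde\Gamma(\xi)}|f(a_k)|^p(1-|a_k|^2)^{1+\alpha}$. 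Integrating over $\Sn$ and exchanging order via Fubini, together with $\sigma(\{\xi:a_k\in\widetilde\Gamma(\xi)\})\asymp(1-|a_k|^2)^n$, yields $\sum_k|f(a_k)|^p(1-|a_k|^2)^{n+1+\alpha}\lesssim\|f\|^p_{A^p_\alpha}$ by the standard discretization of the Bergman norm.

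\textbf{Necessity.} Assume $J_b:A^p_\alpha\to H^q$ is bounded. Fix an $r$-lattice $Z=\{a_k\}$ with $r$ small and consider the random test functions
\[
F_t(z)=\sum_k \lambda_k(1-|a_k|^2)^{n/p}r_k(t)f_k(z),\quad f_k(z)=\frac{(1-|a_k|^2)^{b-(n+1+\alpha)/p}}{(1-\langle z,a_k\rangle)^b},
\]
with $b$ large enough. The atomic decomposition theorem of Section 2.5 gives $\|F_t\|_{A^p_\alpha}\lesssim\|\lambda\|_{T^p_p(Z)}$ uniformly in $t$. Starting from the area function expression for $\|J_bF_t\|^q_{H^q}$, integrating in $t$, applying Kahane's inequality in the Hilbert space $L^2(\Gamma(\xi),|Rb|^2(1-|z|^2)^{1-n}dV)$, Khinchine's inequality to replace the Rademacher square sum by $\sum|\lambda_k|^2(1-|a_k|^2)^{2n/p}|f_k(z)|^2$, and finally restricting to $a_k\in\Gamma(\xi)$ while using subharmonicity of $|f_kRb|^2$ on the Bergman balls $D(a_k,r)\subset\Gamma(\xi)$, I would arrive at the discrete multiplier inequality
\[
\int_\Sn\Big(\sum_{a_k\in\Gamma(\xi)}|\lambda_k|^2\nu_k^2\Big)^{q/2}d\sigma(\xi)\lesssim \|J_b\|^q\|\lambda\|^q_{T^p_p(Z)},\qquad \nu_k:=|Rb(a_k)|(1-|a_k|^2)^{(p-1-\alpha)/p},
\]
i.e.\ multiplication by $\nu$ maps $T^p_p(Z)$ boundedly into $T^q_2(Z)$. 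To translate this into $\{\nu_k\}\in T^{pq/(p-q)}_\infty(Z)$, I would test $\nu$ against elements of $T^{(pq/(p-q))'}_1(Z)$, factor them through $T^p_p(Z)\cdot T^{q'}_2(Z)$ by Proposition \ref{factor}, and invoke the $(T^q_2)^{\ast}\cong T^{q'}_2$ duality of Theorem \ref{TTD1}, in exact analogy with the necessity argument of Theorem \ref{th1} (raising to a large power $s$ if necessary to land in the admissible ranges). Lemma \ref{discsup} finally transports this discrete condition back to $V_b\in L^{pq/(p-q)}(\Sn)$, with $\|V_b\|_{L^{pq/(p-q)}}\lesssim\|J_b\|_{A^p_\alpha\to H^q}$.

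\textbf{Main obstacle.} The delicate step is the concluding factorization/duality argument in the necessity proof: the factors of a generic element of $T^{(pq/(p-q))'}_1(Z)$ must land in norms whose pairings with $T^q_2$ and $T^p_p$ line up with the multiplier estimate, and the sub-cases where some indices fall below $1$ require the second form of Theorem \ref{TTD1} together with the raise-to-a-power trick used in Theorem \ref{th1}. The sufficiency, by contrast, hinges only on the clean subadditivity $(\sum c_i)^{p/2}\le\sum c_i^{p/2}$ available precisely because $p\le 2$.
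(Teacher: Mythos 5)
Your outline follows essentially the same route as the paper in both directions: the sufficiency via Theorem \ref{area}, pulling out $V_b(\xi)^q$, H\"older with exponents $p/(p-q)$ and $p/q$, and a Bergman-ball discretization exploiting $(\sum c_i)^{p/2}\le\sum c_i^{p/2}$; the necessity via the random test functions $F_t$, Kahane--Khinchine, the discrete multiplier estimate \eqref{upper}, and the factorization/duality step where you correctly identify that $p\le 2$ forces the lower index of the test space below $1$, so the second (non-reflexive) form of Theorem \ref{TTD1} and the raise-to-a-power-$s$ trick are needed. This is exactly how the paper proceeds.

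One step in your sufficiency argument is stated in the wrong direction. You claim that subharmonicity of $|f|^2$ lets you bound $\int_{\Gamma(\xi)}|f(z)|^2(1-|z|^2)^{2(1+\alpha)/p-1-n}dV(z)$ \emph{above} by $\sum_{a_k\in\widetilde\Gamma(\xi)}|f(a_k)|^2(1-|a_k|^2)^{2(1+\alpha)/p}$. Subharmonicity gives the opposite inequality ($|f(a_k)|^2$ is dominated by its mean over $D(a_k,r)$, not the other way around), and the claimed bound fails, e.g., for $f$ vanishing at the lattice points. The standard repair, which is what the paper does, is to keep local $L^p$ means instead of point values: bound $\int_{D(a_k,r)}|f|^2\,dV$ by $(1-|a_k|^2)^{n+1}\sup_{D(a_k,r)}|f|^2$ and then the sup by $\bigl((1-|a_k|^2)^{-(n+1)}\int_{D(a_k,2r)}|f|^p\,dV\bigr)^{2/p}$. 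After raising to the power $p/2$ and using subadditivity, the sum re-assembles via \eqref{setin} into $\int_{\widetilde{\widetilde\Gamma}(\xi)}|f(z)|^p(1-|z|^2)^{\alpha-n}dV(z)$, and \eqref{EqG} finishes the estimate; with this substitution the rest of your computation goes through unchanged.
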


\begin{proof}
Let us first prove the sufficiency. By Theorem \ref{area} and H\"older's inequality with exponent $p/q>1$, we have
\begin{align*}
\|J_b f\|_{H^q}^q &\lesssim \int_{\Sn}\left(\int_{\Gamma(\xi)}|Rb(z)|^2 |f(z)|^2 (1-|z|^2)^{1-n}dV(z)\right)^{q/2}d\sigma(\xi)\\
&\leq \int_{\Sn}V_b(\xi)^q \left(\int_{\Gamma(\xi)}|f(z)|^2 (1-|z|^2)^{1-n-\frac{2}{p}(p-1-\alpha)}dV(z)\right)^{q/2}d\sigma(\xi)\\
&\leq \|V_ b\|_{L^{\frac{pq}{p-q}}(\Sn)}^q \left (\int_{\Sn}\left(\int_{\Gamma(\xi)}|f(z)|^2 (1-|z|^2)^{1-n-\frac{2}{p}(p-1-\alpha)}dV(z)\right)^{p/2} \!\! d\sigma(\xi)\right )^{q/p}.
\end{align*}
We estimate the second factor of the line above:
\begin{align*}
\int_{\Gamma(\xi)}|f(z)|^2 & (1-|z|^2)^{1-n-\frac{2}{p}(p-1-\alpha)}dV(z)\\
&\lesssim \sum_{a_k \in \widetilde{\Gamma}(\xi)}(1-|a_k|^2)^{1-n-\frac{2}{p}(p-1-\alpha)}\int_{D(a_k,r)}|f(z)|^2 dV(z)\\
&\lesssim \sum_{a_k \in \widetilde{\Gamma}(\xi)}(1-|a_k|^2)^{2-\frac{2}{p}(p-1-\alpha)-\frac{2}{p}(n+1)}\left(\int_{D(a_k,2r)}|f(z)|^p dV(z)\right)^{2/p}.
\end{align*}
Since $p/2\le 1$, we get
\begin{align*}
\left(\int_{\Gamma(\xi)}\!\!|f(z)|^2 (1-|z|^2)^{1-n-\frac{2}{p}(p-1-\alpha)}dV(z)\right)^{p/2}&
\lesssim \!\!\sum_{a_k \in \widetilde{\Gamma}(\xi)}\!(1-|a_k|^2)^{\alpha-n}\int_{D(a_k,2r)}\!\!|f(z)|^p dV(z)\\
&\lesssim \int_{\widetilde{\widetilde{\Gamma}}(\xi)}|f(z)|^p (1-|z|^2)^{\alpha-n}dV(z).
\end{align*}

Combining the estimates above gives that
\begin{align*}
\|J_b f\|_{H^q}^q &
\lesssim \|V_b\|_{L^{\frac{pq}{p-q}}(\Sn)}^q \left(\int_{\Sn} \left(\int_{\widetilde{\widetilde{\Gamma}}(\xi)}|f(z)|^p (1-|z|^2)^{\alpha-n}dV(z)\right)d\sigma(\xi)\right)^{q/p}\\
&\lesssim \|V_b\|_{L^{\frac{pq}{p-q}}(\Sn)}^q \left(\int_{\Bn} |f(z)|^p (1-|z|^2)^{\alpha}dV(z)\right)^{q/p}
\end{align*}
This yields the sufficiency of the condition
\begin{equation}\label{suffpbig}
\xi \mapsto \sup_{z \in \Gamma(\xi)}|Rb(z)| (1-|z|^2)^{\frac{p-1-\alpha}{p}} \in L^{\frac{pq}{p-q}}(\Sn).
\end{equation}
And moreover, $\|J_b\|_{A_\alpha^p\to H^q}\lesssim \|V_b\|_{L^{\frac{pq}{p-q}}(\Sn)}$.\\

Let us now turn to the necessity. By reasoning similar to the corresponding point in the proof of Theorem \ref{th1}, we may use an approximation argument. Let us assume that our $r$-lattice $\{a_k\}$ satisfies Lemma \ref{discsup}. We can follow that argument of necessity in Theorem \ref{th1}, using Kahane's inequality and Khinchine's inequality together with similar techniques, leading to estimate \eqref{upper}. By writing $\nu=\{\nu_k\}$, where
$$\nu_k=|Rb(a_k)|^q (1-|a_k|^2)^{\frac{q}{p}(p-1-\alpha)},$$
{it is sufficient to show that}
$$\nu \in T^{p/(p-q)}_\infty(Z).$$
For every $s>1$, this is equivalent to
$${\nu^{1/s}:=(\nu_k^{1/s})}\in T^{ps/(p-q)}_\infty(Z)=\left(T^{s'}_{\frac{2s}{2s-q}}(Z) \cdot T^{\frac{ps}{q}}_{\frac{ps}{q}}(Z)\right)^*,$$
by the factorization result in Proposition \ref{factor}. The reader should notice here that if $p\leq 2$, then
$$\frac{q}{ps}+\frac{2s-q}{2s}=1/\rho$$
for some $\rho\leq 1$.
so the factorization and the duality can be carried over.

Now, the argument can be completed by carrying out the same reasoning as in the corresponding place in the proof of Theorem \ref{th1}. We only need to choose $s$ large enough such that all applications of H\"older's inequality are valid. The necessity will follow by virtue of Lemma \ref{discsup}, and then the proof is complete.
\end{proof}


By combining  the theorems in Sections \ref{pleq} and \ref{pbig}, we  obtain the whole proof of our main result.\medskip

\section{Concluding remarks}

\subsection{Trivial case}

Looking at Theorem \ref{main}, it is important to know when the conditions are void, that is, when the only $b \in H$ generating bounded $J_b$ are the constant functions. In this case, we always have $J_b=0$. The following corollary follows easily from the main theorem.

\begin{coro}
Let $\alpha>-1$, $0<p,q<\infty$ and $b \in H$. Then the following hold:
\begin{itemize}
\item[(1) ]  {Let} $0<p<q<\infty$. If $\alpha-p>n(p/q-1)-1$,
then $J_b:A^p_\alpha\to H^q$ is bounded if and only if $J_b=0$.

\item[(2) ] {Let} $0<q\le p<\infty$. If $\alpha-p> -1$, then $J_b:A^p_\alpha\to H^q$ is {bounded} if and only if $J_b=0$.
\end{itemize}
\end{coro}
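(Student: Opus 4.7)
The plan is to treat each region of the $(p,q)$-plane separately by reading off the corresponding characterization from Theorem \ref{main}, to check that the hypothesis on $\alpha$ forces the stated necessary condition to fail unless $Rb\equiv 0$, and then to conclude that $b$ is constant (since $\frac{d}{dt}b(tz)=Rb(tz)/t$) and hence $J_b\equiv 0$.

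For part (1), the range $p<q$ always places us in case (1) of Theorem \ref{main}. Writing $\gamma=n/q+1-(n+1+\alpha)/p$, the hypothesis $\alpha-p>n(p/q-1)-1$ rewrites exactly as $\gamma<0$; the Bloch-type characterization then gives $|Rb(z)|\le C(1-|z|^2)^{-\gamma}$ with $-\gamma>0$, so $|Rb(z)|\to 0$ as $|z|\to 1^-$, and the maximum principle applied to $Rb$ on $r\Bn$ (letting $r\to 1$) yields $Rb\equiv 0$. The same approach handles part (2) when $p=q$: one is in case (1) (if $p\le 2$) or in case (2) (if $p>2$, using the Bloch estimate as a general necessary condition obtained by testing against the normalized reproducing-kernel functions $f_z$, exactly as in the necessity proof of Theorem \ref{pleqmin}); the Bloch exponent equals $(p-1-\alpha)/p$, which is strictly negative under $\alpha>p-1$.

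The genuinely different situation is part (2) with $q<p$, so either case (3) (when $p>2$) or case (4) (when $p\le 2$) applies. In case (4), the hypothesis $V_b\in L^{pq/(p-q)}(\Sn)$ immediately gives $V_b(\xi)<\infty$ for almost every $\xi\in\Sn$, which is precisely the pointwise bound $|Rb(z)|(1-|z|^2)^{(p-1-\alpha)/p}\le V_b(\xi)$ on $\Gamma(\xi)$. For case (3), a subharmonicity estimate for $|Rb|^{2p/(p-2)}$ on a Bergman ball $D(z,r)$, combined with $(1-|w|^2)\asymp(1-|z|^2)$ on that ball and with $D(z,r)\subset\widetilde{\Gamma}(\xi)$ from \eqref{setin}, extracts the analogous pointwise bound
$$|Rb(z)|(1-|z|^2)^{(p-1-\alpha)/p}\lesssim U_b(\xi), \qquad z\in\Gamma(\xi),$$
with the exponent simplifying to $(p-1-\alpha)/p$ after a short computation. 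Under $\alpha>p-1$ this exponent is strictly negative, so $|Rb(z)|\to 0$ as $z\to\xi$ non-tangentially for almost every $\xi\in\Sn$. The classical boundary uniqueness theorem for holomorphic functions on $\Bn$ (Cirka in several variables, Privalov in dimension one), which asserts that admissible limit zero on a set of positive measure in $\Sn$ forces the function to vanish identically, then gives $Rb\equiv 0$.

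The main obstacle is precisely this last step: when $q<p$, the Bloch estimate alone only bounds $|Rb(z)|$ by a (possibly diverging) negative power of $1-|z|^2$, so one genuinely has to extract the non-tangential vanishing of $Rb$ from the area- or sup-type conditions of cases (3)--(4) and then invoke a several-variables boundary uniqueness theorem in order to finish.
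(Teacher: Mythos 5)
Your proof is correct, and it is in fact more than the paper provides: the authors only assert that the corollary ``follows easily from the main theorem'' and record the heuristic about $\alpha-p$, so there is no detailed argument to compare against. Your case analysis matches the partition of Theorem \ref{main}. Part (1) and the $p=q$ sub-case of part (2) are immediate once one observes (as you do) that the Bloch-type bound $|Rb(z)|\lesssim (1-|z|^2)^{-\gamma}$ with $\gamma=\tfrac{n}{q}+1-\tfrac{n+1+\alpha}{p}$ is necessary for \emph{all} $(p,q)$ by the test-function computation in Theorem \ref{pleqmin}; when $\gamma<0$ this forces $Rb\to 0$ uniformly and the maximum principle kills $Rb$. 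The only genuinely nontrivial point is $q<p$, where (as you correctly note) this global Bloch bound is not violated under the weaker hypothesis $\alpha>p-1$, so one must use the full tent-space conditions of cases (3)--(4). Your extraction of the pointwise bound $|Rb(z)|(1-|z|^2)^{(p-1-\alpha)/p}\lesssim U_b(\xi)$ on $\Gamma(\xi)$ from case (3) is right (the subharmonicity step gives the exponent $\bigl(\tfrac{2-2\alpha}{p-2}+2\bigr)\tfrac{p-2}{2p}=\tfrac{p-1-\alpha}{p}$, and the change of aperture in \eqref{setin} is harmless), and in case (4) the bound is the definition of $V_b$. The concluding appeal to the boundary uniqueness theorem for admissible limits is legitimate; if one prefers to avoid citing the several-variables version directly, the same conclusion follows by slicing, since $\lambda\mapsto Rb(\lambda\xi)$ has one-dimensional nontangential limit $0$ on a set of positive measure in $\mathbb{T}$ for a positive-measure family of slices, and Privalov's theorem then forces $Rb\equiv 0$ slice by slice.
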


Since $A^p_\alpha$ {decreases with respect to $p$, and increases with respect to  $\alpha$,} it should not come as a surprise that the deciding factor is the quantity $\alpha-p$.

\subsection{Comparison with $J_b$ acting between Hardy spaces}

Since $H^p$ often appears as the limit of $A^p_\alpha$ when $\alpha\to -1$, it makes sense to compare our results with those of \cite{P1} where the boundedness of $J_b:H^p\to H^q$ is described. We list some remarks, which should be taken only in the heuristical sense.

\begin{itemize}
\item[(1) ] If $0<p\leq \min\{2,q\}$ or $2<p<q<\infty$, then as $\alpha\to -1$, the characterizing condition becomes
$$\sup_{z \in \Bn} |Rb(z)|(1-|z|^2)^{\frac{n}{q}-\frac{n}{p}+1}<\infty.$$
This agrees exactly with the $H^p$ case, when $0<p<q<\infty$. When $p=q\leq 2$, this becomes the Bloch condition, which is slightly weaker than $BMOA$ condition for the $H^p$ case.
\item[(2) ] If $2<p=q<\infty$, then as $\alpha\to -1$, the characterizing condition becomes that $d\mu_b(z)=|Rb(z)|^{\frac{2p}{p-2}}(1-|z|^2)^{\frac{p+2}{p-2}} dV(z)$ is a Carleson measure. In the $H^p$ case one has the $BMOA$ condition, which is equivalent to
$|Rb(z)|^2 (1-|z|^2)dV(z)$
being a Carleson measure. Note that $2p/(p-2)>2$, but the conditions become the same as $p\to \infty$ (i.e., $2p/(p-2)\to 2$).
\item[(3) ] If $p>\max\{2,q\}$, then as $\alpha\to -1$, the characterizing condition becomes that
$$\xi \mapsto \left(\int_{\Gamma(\xi)}|Rb(z)|^{\frac{2p}{p-2}}(1-|z|^2)^{\frac{4}{p-2}+1-n}dV(z)\right)^{\frac{(p-2)}{2p}}$$
belongs to $L^{\frac{pq}{p-q}}(\Sn)$. Similarly to the previous case, this agrees with the $H^p$ case in the limit when $p\to \infty$. Indeed, by Theorem \ref{area}, this agrees with $b \in H^{\frac{pq}{p-q}}$.
\item[(4) ] If $0<q<p\leq 2$, then as $\alpha\to -1$, the characterizing condition becomes that the function
$\xi \mapsto \sup_{z \in \Gamma(\xi)}|Rb(z)| (1-|z|^2)$
belongs to $L^{\frac{pq}{p-q}}(\Sn)$. In the terminology and notation of \cite{Per}, this means that $b \in \mathcal{BT}^{\frac{pq}{p-q}}$. It is not difficult to see that for any $\beta>-1$, one has
$H^{\frac{pq}{p-q}}\subset \mathcal{BT}^{\frac{pq}{p-q}} \subset A^{\frac{pq}{p-q}}_\beta,$
and no inclusion can be reversed.
\end{itemize}

\subsection{Integration operators from Hardy to Bergman spaces}
It is also interesting to study, for $b\in H$, the boundedness of $J_ b:H^p\rightarrow A^q_{\alpha}$ for $\alpha>-1$ and $0<p,q<\infty$. We can always assume that $b(0)=0$, and by the characterization of Bergman spaces in terms of radial derivatives, we have
$
\|J_ b f \|^q_{A^q_{\alpha}} \asymp \|R(J_ bf )\|^q_{A^q_{\alpha+q}}.
$
Using the basic identity $R(J_ bf)=fRb$ we see that the boundedness is equivalent to
\[
\int_{\Bn} |f(z)|^q \,|Rb(z)|^q \,(1-|z|^2)^{q+\alpha} dV(z) \lesssim \|f\|^q_{H^p}.
\]
In other words, we have the embedding from $H^p$ into $L^q(\mu)$ with $d\mu(z)=|Rb(z)|^q \,(1-|z|^2)^{q+\alpha} dV(z)$. Thus, by the Carleson-Duren and Luecking's theorems we get the following result.
\begin{theorem}
Let $b\in H$ and $\alpha>-1$. Then
\begin{itemize}
\item[(i)] For $0<p<\infty$, the operator $J_ b:H^p\rightarrow A^p_{\alpha}$ is bounded if and only if $d\mu(z)=|Rb(z)|^p \,(1-|z|^2)^{p+\alpha} dV(z)$ is a Carleson measure.

\item[(ii)] For $0<p<q<\infty$, the operator $J_ b:H^p\rightarrow A^q_{\alpha}$ is bounded if and only if $b$ belongs to the Bloch type space $\mathcal{B}^{\gamma}$ with $\gamma=1+\frac{(n+1+\alpha)}{q} - \frac{n}{p}$

\item[(iii)] For $0<q<p<\infty$, the operator $J_ b:H^p\rightarrow A^q_{\alpha}$ is bounded if and only if the function
\[
H_ b(\zeta):= \int_{\Gamma(\zeta)} |Rb(z)|^q \,(1-|z|^2)^{q+\alpha-n} dV(z)
\]
belongs to $L^{p/(p-q)}(\Sn)$.
\end{itemize}

\end{theorem}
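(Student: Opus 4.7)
My plan is to reduce the statement to the three Carleson-type embedding theorems recalled in the preliminaries. Since $(J_bf)(0)=0$ and $\|g\|_{A^q_\alpha}^q\asymp|g(0)|^q+\|Rg\|_{A^q_{\alpha+q}}^q$ for $g\in H$, the identity $R(J_bf)=f\cdot Rb$ gives
\begin{equation*}
\|J_bf\|_{A^q_\alpha}^q\asymp\int_{\Bn}|f(z)|^q\,|Rb(z)|^q(1-|z|^2)^{q+\alpha}\,dV(z)=\int_{\Bn}|f(z)|^q\,d\mu_b(z),
\end{equation*}
where $d\mu_b(z):=|Rb(z)|^q(1-|z|^2)^{q+\alpha}dV(z)$. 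Hence the boundedness of $J_b:H^p\to A^q_\alpha$ is equivalent to that of the inclusion $I_d:H^p\to L^q(\mu_b)$, with comparable quasinorms.

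Cases (i) and (iii) then follow directly from cited results. When $p=q$, the H\"ormander--Carleson theorem characterizes boundedness of $I_d$ by $\mu_b$ being a Carleson measure, which is precisely (i). When $p>q$, Luecking's theorem (Theorem \ref{LT}) yields boundedness iff $\widetilde{\mu_b}\in L^{p/(p-q)}(\Sn)$; unpacking the definition of $\widetilde{\mu_b}$ immediately gives $\widetilde{\mu_b}(\xi)=H_b(\xi)$, which is (iii).

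Case (ii), $p<q$, requires a small additional step. By Duren's extension of Carleson's theorem, $I_d:H^p\to L^q(\mu_b)$ is bounded iff $\mu_b$ is a $(q/p)$-Carleson measure, so I would need to show this is equivalent to $b\in\mathcal{B}^\gamma$ with $\gamma=1+(n+1+\alpha)/q-n/p$. For sufficiency, the Bloch bound $|Rb(z)|\lesssim(1-|z|^2)^{-\gamma}$ yields $d\mu_b(z)\lesssim(1-|z|^2)^{nq/p-n-1}dV(z)$ (note $q+\alpha-q\gamma=nq/p-n-1$), whence the standard volume estimate on non-isotropic balls gives $\mu_b(B_\delta(\xi))\lesssim\delta^{nq/p}$. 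For necessity, I would apply the integral characterization \eqref{sCM} with $s=q/p$ and a sufficiently large parameter $t$, then restrict the integral over $\Bn$ to a single Bergman ball $D(a,1)$, where $|1-\langle z,a\rangle|\asymp 1-|a|^2$, and use the subharmonicity of $|Rb|^q$ to extract the pointwise bound $|Rb(a)|(1-|a|^2)^\gamma\lesssim 1$.

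There is no serious obstacle here: the result is essentially the concatenation of three known embedding theorems (Carleson, Duren, Luecking) with the standard derivative characterization of Bergman norms. The only mildly delicate point is the translation between the $(q/p)$-Carleson condition on $\mu_b$ and the Bloch-type condition on $b$ in case (ii), but that translation is routine once one invokes Forelli--Rudin (Lemma \ref{IctBn}) together with subharmonicity.
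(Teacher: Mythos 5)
Your proposal is correct and follows essentially the same route as the paper: reduce $\|J_bf\|_{A^q_\alpha}$ to $\|R(J_bf)\|_{A^q_{\alpha+q}}=\|fRb\|_{A^q_{\alpha+q}}$, recognize the result as the embedding $I_d:H^p\to L^q(\mu_b)$, and invoke the Carleson--H\"ormander, Duren, and Luecking theorems for $p=q$, $p<q$, and $p>q$ respectively. The only place you go beyond the paper is in writing out the translation in case (ii) between the $(q/p)$-Carleson condition on $\mu_b$ and the Bloch-type condition $b\in\mathcal{B}^\gamma$ (the paper leaves this implicit), and your computation there --- the exponent identity $q+\alpha-q\gamma=nq/p-n-1$ for sufficiency, and \eqref{sCM} with subharmonicity on $D(a,1)$ for necessity --- is sound.
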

In the one-dimensional case, part of the previous results were obtained by J. R\"{a}tty\"{a} in \cite{Rat}.

\end{document}